\theoremstyle{plain}
\newtheorem{theorem}{Theorem}
\theoremstyle{definition}
\newtheorem{question}{Question}
\newtheorem{proposition}{Proposition}[section]
\newtheorem{corollary}[proposition]{Corollary}
\newtheorem{lemma}[proposition]{Lemma}
\newtheorem{definition}[proposition]{Definition}
\theoremstyle{remark}
\newtheorem{claim}{Claim}
\DeclareMathOperator{\tcf}{tcf}
\DeclareMathOperator{\bd}{bd}
\newcommand{\pcfsig}{\text{\rm pcf}_{\sigma\text{\rm-com}}}
\newcommand{\ppgamma}{\text{\rm pp}_{\Gamma(\theta,\sigma)}}
\DeclareMathOperator{\Sk}{Sk}
\DeclareMathOperator{\cf}{cf}
\DeclareMathOperator{\PP}{PP}
\DeclareMathOperator{\pcf}{pcf}
\newcommand{\sk}{\vskip.05in}
\newcommand{\restr}{\upharpoonright}
\DeclareMathOperator{\cov}{cov}
\DeclareMathOperator{\reg}{{\sf Reg}}
\newcommand{\pcfgamma}{{\rm pcf}_{\Gamma(\theta,\sigma)}}
\DeclareMathOperator{\pp}{pp}
\numberwithin{equation}{section}
\begin{document}
\title{Representability and Compactness for Pseudopowers}
\author{Todd Eisworth}
\date{\today}
\begin{abstract}
We prove a compactness theorem for pseudopower operations of the form $\pp_{\Gamma(\mu,\sigma)}(\mu)$ where $\aleph_0<\sigma=\cf(\sigma)\leq\cf(\mu)$. Our main tool is a result that has Shelah's cov vs. pp Theorem as a consequence.  We also show that the failure of compactness in other situations has significant consequences for pcf theory, in particular, implying the existence of a progressive set $A$ of regular cardinals for which $\pcf(A)$ has an inaccessible accumulation point.
\end{abstract}
\keywords{pcf theory, psuedopowers, compactness}
\subjclass[2010]{03E04, 03E55}
\maketitle

\section{Background and Definitions}

This paper is concerned with problems arising in applications of pcf theory to cardinal arithmetic. Shelah's {\em Cardinal Arithmetic}~\cite{cardarith} is the most complete source for the background material we need, but we assume only that the reader has familiarity with the chapter of Abraham and Magidor~\cite{AM} in the Handbook of Set Theory~\cite{handbook}.  Any notation left undefined comes from their exposition.

To ground our discussion,  consider the following {\em ad hoc} definition.
Given a singular cardinal~$\mu$, let us agree to say a cardinal~$\kappa$ is {\em representable at $\mu$} if there are objects $A$ and $I$ such that
\begin{itemize}
\item $A$ is a cofinal subset of $\mu\cap\reg$ (the regular cardinals) with $|A|<\mu$
\sk
\item $I$ is a $\cf(\mu)$-complete ideal on $A$ extending $J^{\bd}[A]$, the ideal of bounded subsets of $A$, and
\sk
\item $\kappa=\tcf(\prod A/I)$ is the true cofinality of $\prod A/I$ , that is, there is an sequence $\langle f_\alpha:\alpha<\kappa\rangle$ of functions in $\prod A$ such that
\begin{itemize}
\item $\alpha<\beta<\kappa\Longrightarrow f_\alpha<_I f_\beta$, and
\sk
\item for all $g\in\prod A$ there is an $\alpha<\kappa$ such that $g<_I f_\alpha$.
\sk
\end{itemize}
\end{itemize}

Those readers familiar with Shelah's work in cardinal arithmetic will recognize this as related to the pseudopower operation~$\pp(\mu)$ and its variants, and consequently a lot is known already.  For example, the existence of scales tells us~$\mu^+$ is always representable at~$\mu$, as we can find $A$ of cardinality~$\cf(\mu)$ for which the corresponding ideal~$I$ is just the ideal of bounded subsets of~$A$.  A little more work shows that the set of cardinals representable at~$\mu$ is an interval of regular cardinals of length at most $2^{<\mu}\cdot\cf([\mu]^{<\mu},\subseteq)$ with some nice closure properties, as worked out by Shelah in Section~2 of~\cite{355}. Moreover, the supremum of the cardinals representable at~$\mu$ has a role in more standard cardinal arithmetic: if $\cf(\mu)$ is uncountable, then it is the minimum cardinality of a family $\mathcal{P}\subseteq[\mu]^{<\mu}$ such that every member of $[\mu]^{<\mu}$ is covered by a union of fewer than $\cf(\mu)$ sets from~$\mathcal{P}$\footnote{This follows easily from work of Shelah; see Corollary~\ref{6something} in this paper.}. The role  of this paper, though, is to address questions about ``compactness'', as exemplified by the following:

\begin{question}
Suppose $\kappa$ is a regular limit cardinal (that is, $\kappa$ is weakly inaccessible) and the set of $\tau$ that are representable at~$\mu$ is unbounded in $\kappa$.  Is~$\kappa$ also representable at~$\mu$?
\end{question}

We show the answer to the above question is ``yes'' in the case where $\mu$ has uncountable cofinality. We also examine more general versions of the above question, versions in which we impose restrictions on the type of representability under consideration. For example, we may restrict the cardinality of the set $A$, or relax the degree of completeness we demand from the ideal~$I$.  In many situations, we show the corresponding question has a positive answer unless something bizarre happens in pcf theory (in a sense to be made precise later).

Our main tool is a result that also implies Shelah's ``cov vs. pp. Theorem'', a theorem which connects pseudopowers with more traditional cardinal arithmetic considerations. His book {\em Cardinal Arithmetic} contains two proofs of this theorem, with the second proof claiming a positive answer to almost all versions of the compactness question we consider here. This second proof is found as Lemma~3.5 in Chapter~IX of {\em Cardinal Arithmetic}~\cite{cardarith}, however the proof given does not work (pointed out by Shelah in Section~6 of~\cite{513}).  What we do in this paper is to develop carefully the needed background material, and then push through a weaker version of his argument yielding the conclusions we mentioned above.   This has added importance because there are many results in his later work whose proofs state only ``like Lemma~3.5 of~\cite{400}''.\footnote{The original proof of the cov vs. pp Theorem (Theorem 5.4 of Chapter~III of~\cite{cardarith}) is perfectly fine, although it has a weaker conclusion and the methods do not seem to give the applications we derive here. That proof was also written before the existence of generators was proved, so it can also be simplified quite a bit.}

Returning to the required background for this paper, we can say a little more.  Our ideal reader will be familiar with pcf theory as put forward in~\cite{AM}, \cite{abc}, or the book~\cite{3germans}.  This may be a lot to demand, so we are deliberately gentle in a few places.  We do assume familiarity with the definition of $\pcf(A)$, the generators $B_\lambda[A]$, and the pcf ideals $J_{<\lambda}[A]$, as well as their basic properties.  We follow~\cite{AM} and~\cite{3germans} by using the adjective ``progressive'' to describe sets of cardinals $A$ for which $|A|<\min(A)$.  We follow Shelah and use the notation $J^{\bd}[A]$ to denote the ideal of bounded subsets of $A$.

We close this introduction with the following folklore proposition, important to us because it captures a basic fact used several times.  We leave the proof to the reader as a warm-up for the remainder of the paper.

\begin{proposition}
\label{folklore}
Let $A$ be a progressive set of regular cardinals, $\lambda$ a cardinal, and $J$ a proper ideal on $A$. Then the following are equivalent:
\begin{enumerate}
\item $\tcf(\prod A/J)$ is defined and equal to $\lambda$.
\sk
\item $J_{<\lambda}[A]\subseteq J$ and $A\setminus B_\lambda[A]\in J$	\end{enumerate}
\end{proposition}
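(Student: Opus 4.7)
The plan is to leverage the standard ultrafilter characterization of the pcf ideal $J_{<\lambda}[A]$ and the generator $B_\lambda[A]$: for an ultrafilter $D$ on $A$, one has $\tcf(\prod A/D)=\lambda$ precisely when $D\cap J_{<\lambda}[A]=\emptyset$ and $B_\lambda[A]\in D$. With this in hand, each direction reduces to straightforward bookkeeping with the witnessing scale.

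For $(1)\Rightarrow(2)$, fix a sequence $\langle f_\alpha:\alpha<\lambda\rangle$ witnessing $\tcf(\prod A/J)=\lambda$, and observe first that for any ultrafilter $D$ on $A$ disjoint from $J$ the same sequence witnesses $\tcf(\prod A/D)=\lambda$: the ``bad'' sets $\{a:f_\alpha(a)\geq f_\beta(a)\}$ and $\{a:g(a)\geq f_\alpha(a)\}$ that control increase and cofinality lie in $J$, hence outside of $D$. Now if $X\subseteq A$ lies outside $J$, extend the dual filter of $J$ to an ultrafilter $D$ with $X\in D$; the previous observation combined with the characterization gives $X\notin J_{<\lambda}[A]$ and $X\cap B_\lambda[A]\in D$, so in particular $X\not\subseteq A\setminus B_\lambda[A]$. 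Taking contrapositives yields $J_{<\lambda}[A]\subseteq J$ and $A\setminus B_\lambda[A]\in J$.

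For $(2)\Rightarrow(1)$, let $\langle f_\alpha:\alpha<\lambda\rangle$ be a scale on $B_\lambda[A]$ modulo $J_{<\lambda}[A]\restr B_\lambda[A]$, which exists by the defining property of the generator, and extend each $f_\alpha$ by $0$ on $A\setminus B_\lambda[A]$. For $\alpha<\beta$, the set $\{a\in A:f_\alpha(a)\geq f_\beta(a)\}$ decomposes as the union of a member of $J_{<\lambda}[A]\subseteq J$ with $A\setminus B_\lambda[A]\in J$, so $f_\alpha<_J f_\beta$. Cofinality follows by the same absorption trick: given $g\in\prod A$, cofinality of the original scale on $B_\lambda[A]$ produces $\alpha$ with $\{a\in B_\lambda[A]:g(a)\geq f_\alpha(a)\}\in J_{<\lambda}[A]\subseteq J$, and the extra $A\setminus B_\lambda[A]$ contributed by the extension also lies in $J$.

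The only real obstacle is setting up the ultrafilter characterization of $J_{<\lambda}[A]$ and $B_\lambda[A]$ correctly; no new pcf-theoretic content is required, and the hypothesis that $J$ is proper ensures the resulting sequence legitimately computes $\tcf(\prod A/J)$.
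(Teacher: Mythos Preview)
The paper does not actually give a proof of this proposition: it is explicitly left to the reader ``as a warm-up for the remainder of the paper.'' Your argument is correct and is exactly the sort of routine verification intended. The $(1)\Rightarrow(2)$ direction via ultrafilters is clean---pushing the scale through any ultrafilter extending $J^*$ and invoking the standard characterization of $J_{<\lambda}[A]$ and $B_\lambda[A]$ in terms of ultrafilters is the natural move---and the $(2)\Rightarrow(1)$ direction by extending a scale on $B_\lambda[A]$ by zeros is the expected construction. Nothing is missing.
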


Thus $\prod A/J$ has a true cofinality if and only if there is a $\lambda\in\pcf(A)$ with $A\setminus B_{\lambda}[A]\in J$, and the true cofinality is equal to the least such $\lambda$.  We will make use of this repeatedly.

\section{On $\pcf_{\Gamma(\theta,\sigma)}(A)$ and its relatives}

There are a couple of standard variants of $\pcf(A)$ discussed in~\cite{AM} that are special instances of the concepts we consider in this section, and we will use them to ground our discussion.  The first variant is $\pcf_\kappa(A)$ (see Definition 3.10 of~\cite{AM}) defined for $\kappa\leq |A|$ as
\begin{equation}
\pcf_\kappa(A)=\bigcup\{\pcf(X):X\in [A]^\kappa\}.	\end{equation}
Thus, a cardinal is in $\pcf_\kappa(A)$ if it is the cofinality of an ultraproduct of $A$ by an ultrafilter containing a set of size~$\kappa$.  The second variant, $\pcfsig(A)$, consists of those cardinals $\lambda$ such that
\begin{equation}
\lambda=\tcf(\prod A/ J)	
\end{equation}
where $J$ is a $\sigma$-complete ideal on $A$.  Abraham and Magidor give a nice discussion of these concepts, showing that they behave similar to $\pcf(A)$, and then relating them back to questions in combinatorial set theory.

We look at a common generalization of both of these, using the following definition and notation due to Shelah in Chapter~X of~\cite{cardarith}.  There are no surprises here, but we try to give systematic coverage because much of this has not been written down before.

\begin{definition}
Let $A$ be a progressive set of regular cardinals and let $\sigma<\theta$ be cardinals with $\sigma$ regular. A cardinal $\lambda$ is in $\pcf_{\Gamma(\theta,\sigma)}(A)$ if there is an ideal $J$ on $A$ such that
\begin{itemize}
\item $J$ is $\sigma$-complete,
\sk
\item $J^*$ (the filter dual to $J$) contains a set of cardinality $<\theta$, and
\sk
\item $\tcf(\prod A/ J)$ exists and is equal to $\lambda$.
\end{itemize}
Thus,
\begin{equation}
\label{thus}
\pcfgamma(A)=\bigcup\{\pcfsig(X):X\in [A]^{<\theta}\}.
\end{equation}
We will say $\lambda\in\pcf_{\Gamma^*(\theta,\sigma)}(A)$ if in addition $J$ extends the ideal of bounded subsets of~$A$.
\end{definition}

Whenever we use the notation $\Gamma(\theta,\sigma)$, it refers in some fashion to the family of $\sigma$-complete ideals on sets of cardinality less than~$\theta$.  This is false when taken in a literal sense: for example the ideals referenced in the preceding definition are technically on the set $A$, which can have large cardinality. However, the fact that we require the dual filter to contain a set of cardinality less than~$\theta$ means that our intuition is essentially correct.

The following elementary result records the fact that  $\pcf_{\Gamma(\theta,\sigma)}(A)$ reduces to standard variants of $\pcf(A)$ if we choose the parameters appropriately.

\begin{proposition}
With $A$, $\sigma$, and $\theta$ as above, we note:
\begin{enumerate}
\item $A\subseteq \pcf_{\Gamma(\theta,\sigma)}(A)\subseteq \pcf(A)$
\sk
\item If $\aleph_0\leq\sigma'\leq\sigma<\theta\leq\theta'$ with $\sigma'$ regular, then
\begin{equation}
\pcf_{\Gamma(\theta,\sigma)}(A)\subseteq  \pcf_{\Gamma(\theta',\sigma')}(A).
\end{equation}
\sk
\item For $\kappa\leq |A|$, $\pcf_\kappa(A)=\pcf_{\Gamma(\kappa^+,\aleph_0)}(A)$
\sk
\item If $|A|<\theta$, then
\begin{equation}
\pcf_{\Gamma(\theta,\sigma)}(A)=\pcfsig(A).
\end{equation}
In particular,
\begin{equation}
\pcf_{\Gamma(|A|^+,\aleph_0)}(A)=\pcf(A),
\end{equation}
and
\begin{equation}
\pcf_{\Gamma(|A|^+,\sigma)}(A)=\pcfsig(A).	
\end{equation}

\end{enumerate}
\end{proposition}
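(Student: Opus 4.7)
To verify each of the four items I would unwind definitions directly; the only non-trivial tool invoked anywhere is Proposition~\ref{folklore}. For item~(1), to place an arbitrary $\mu \in A$ into $\pcfgamma(A)$, I use the principal maximal ideal $J_\mu = \{Y \subseteq A : \mu \notin Y\}$: it is $\kappa$-complete for every $\kappa$, its dual filter contains the singleton $\{\mu\}$ (of cardinality $1 < \theta$), and $\tcf(\prod A/J_\mu) = \mu$ via constant functions. The inclusion $\pcfgamma(A) \subseteq \pcf(A)$ is immediate, since any witness $J$ is a proper ideal on $A$ and Proposition~\ref{folklore} identifies the resulting true cofinality as a member of $\pcf(A)$. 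Item~(2) is pure monotonicity: a $\sigma$-complete ideal is $\sigma'$-complete whenever $\sigma' \leq \sigma$, and a set of cardinality below $\theta$ remains below $\theta' \geq \theta$, so any witness transfers verbatim.

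For item~(3) I would prove both inclusions. For $\pcf_\kappa(A) \subseteq \pcf_{\Gamma(\kappa^+,\aleph_0)}(A)$: given $\lambda = \cf(\prod X/D)$ with $X \in [A]^\kappa$ and $D$ an ultrafilter on $X$, pull back to $J = \{Y \subseteq A : Y \cap X \in D^*\}$; this is a proper ideal on $A$ with $X \in J^*$ of cardinality $\kappa < \kappa^+$, and lifting any $<_{D^*}$-increasing cofinal sequence from $\prod X$ to $\prod A$ (by an arbitrary assignment off $X$) exhibits $\lambda = \tcf(\prod A/J)$. For the reverse direction: given a proper ideal $J$ on $A$ with some $X \in J^*$ of cardinality $\leq \kappa$ and $\tcf(\prod A/J) = \lambda$, note that $A \setminus X \in J$ forces $\lambda = \tcf(\prod X/(J \restr X))$. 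Extend $J \restr X$ to any ultrafilter $D$ on $X$ whose dual filter contains $(J \restr X)^*$; the original cofinal chain remains $<_D$-increasing and cofinal, so $\lambda = \cf(\prod X/D) \in \pcf(X)$. If $|X| < \kappa$, enlarge $X$ within $A$ to have size exactly $\kappa$ and use that $\pcf$ is monotone in the underlying set.

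Item~(4) follows because the hypothesis $|A| < \theta$ makes the cardinality requirement on $J^*$ automatic (take $A$ itself), so the definition of $\pcfgamma(A)$ collapses verbatim to that of $\pcfsig(A)$. The two displayed consequences come from substituting $\theta = |A|^+$ and then either $\sigma = \aleph_0$ (every proper ideal is $\aleph_0$-complete, so $\pcfsig$ becomes $\pcf$) or a general regular $\sigma$. The only step that uses anything beyond definitional bookkeeping is the ultrafilter extension in the $\subseteq$ direction of item~(3), which relies on the standard fact that enlarging an ideal preserves (rather than destroys) any cofinal increasing chain; I do not foresee this presenting a genuine obstacle.
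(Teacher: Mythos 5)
The paper states this proposition without proof (it is offered as an elementary consequence of the definitions), so there is no argument in the paper to compare against. Your proof is correct and follows exactly the line of reasoning one would expect: principal ideals for $A\subseteq\pcfgamma(A)$, Proposition~\ref{folklore} (or ultrafilter extension) for $\pcfgamma(A)\subseteq\pcf(A)$, pure monotonicity in item (2), the pullback/restriction correspondence between ideals on $A$ concentrating on $X$ and ideals on $X$ for item (3), and observing in item (4) that the $<\theta$ constraint is vacuous when $|A|<\theta$. Two cosmetic slips: in item (1) the cofinal chain in $\prod A/J_\mu$ is given by functions that are $\alpha$ at the coordinate $\mu$ (not literally constant functions, which might fail to lie in $\prod A$ when $\mu$ is not $\min A$); and in item (3) the increasing cofinal chain is $<_D$-increasing, not $<_{D^*}$-increasing. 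Neither affects the argument.
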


The specific goal of this section is to develop the theory of $\pcf_{\Gamma(\theta,\sigma)}(A)$ and its starred variant in a way that parallels the careful development of $\pcfsig(A)$ given in~\cite{AM}.  We first work towards characterizing when a cardinal $\lambda$ is in $\pcf_{\Gamma(\theta,\sigma)}(A)$.

\begin{definition}
Let $I$ be an ideal on a set $A$, and let $\theta$ be a cardinal.   We say $I$ is {\em $\theta$-based} if every $B\in I^+$ contains a subset of cardinality $<\theta$ that is also in $I^+$.  Equivalently, $I$ is $\theta$-based if
 \begin{equation}
 B\in I \Longleftrightarrow [B]^{<\theta}\subseteq I.
\end{equation}
\end{definition}

Given an ideal $I$ on a set $A$ and a cardinal $\theta$, we may define a collection $J\subseteq \mathcal{P}(A)$ by
\begin{equation}
B\in J\Longleftrightarrow [B]^{<\theta}\subseteq I.
\end{equation}
It is easy to show that $J$ is a $\theta$-based ideal extending $I$, and in fact $J$ is the smallest such ideal.  Moreover, if $I$ is $\sigma$-complete for some infinite regular cardinal $\sigma<\theta$, then the ideal $J$ is also $\sigma$-complete.

\begin{definition}
Let $I$ be an ideal on a set $A$, and let $\sigma<\theta$ be cardinals with $\sigma$ regular.  The {\em $(\theta,\sigma)$-completion of $I$} is the collection of sets $J\subseteq A$ consisting of those subsets $B$ of $A$ for which each element of $[B]^{<\theta}$ is covered by a union of fewer than $\sigma$ sets in $I$.
\end{definition}

The following simple proposition justifies our terminology, and has an elementary proof which is left to the reader.

\begin{proposition}
Let $I$ be an ideal on the set $A$, and let $\sigma<\theta$ be cardinals with $\sigma$~regular.  The $(\theta,\sigma)$-completion of $I$ is the smallest $\theta$-based $\sigma$-complete ideal on $A$ extending~$I$.
\end{proposition}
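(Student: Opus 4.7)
The plan is to verify in turn that $J$ is an ideal extending $I$, that it is $\theta$-based, that it is $\sigma$-complete, and that it is minimal among such ideals. All four checks pivot on the defining condition ``every $X \in [B]^{<\theta}$ is covered by a union of fewer than $\sigma$ members of $I$'', so I would set that formulation up once and then walk through the properties.

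I would begin with the easy observations: $I \subseteq J$ holds because any $B \in I$ itself covers each of its subsets, so a single element of $I$ witnesses the covering requirement; and closure under subsets is immediate from the definition. Next I would handle the $\theta$-based property as a warm-up. If $B \in J$ and $X \in [B]^{<\theta}$, then $[X]^{<\theta} \subseteq [B]^{<\theta}$, so every member of $[X]^{<\theta}$ is covered by fewer than $\sigma$ sets from $I$; hence $X \in J$, and $[B]^{<\theta} \subseteq J$. Conversely, if $[B]^{<\theta} \subseteq J$ and $X \in [B]^{<\theta}$, then since $|X|<\theta$ we have $X \in [X]^{<\theta}$, so the hypothesis $X \in J$ applied to $X$ itself produces a covering of $X$ by fewer than $\sigma$ sets of $I$, whence $B \in J$.

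For $\sigma$-completeness (which subsumes closure under finite unions since $\sigma \geq \aleph_0$), I would take $\langle B_i : i < \gamma\rangle$ from $J$ with $\gamma < \sigma$, set $B = \bigcup_i B_i$, and fix $X \in [B]^{<\theta}$. For each $i$, $X \cap B_i \in [B_i]^{<\theta}$, so there is $\mathcal{F}_i \subseteq I$ of size $< \sigma$ covering $X \cap B_i$. Then $\bigcup_i \mathcal{F}_i$ covers $X$, and regularity of $\sigma$ combined with $\gamma < \sigma$ and $|\mathcal{F}_i| < \sigma$ forces $|\bigcup_i \mathcal{F}_i| < \sigma$. This is the only genuinely nontrivial point in the whole argument, and it is the mild obstacle: without regularity of $\sigma$ one could a priori land at exactly $\sigma$ pieces rather than strictly fewer.

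Finally, for minimality, I would let $J'$ be any $\theta$-based $\sigma$-complete ideal on $A$ with $I \subseteq J'$, and let $B \in J$. Since $J'$ is $\theta$-based, it suffices to show $[B]^{<\theta} \subseteq J'$; and given $X \in [B]^{<\theta}$, the defining property of $J$ produces $\mathcal{F} \subseteq I$ of size $< \sigma$ with $X \subseteq \bigcup \mathcal{F}$, so $\sigma$-completeness of $J'$ and $I \subseteq J'$ give $\bigcup \mathcal{F} \in J'$ and hence $X \in J'$. This delivers $J \subseteq J'$ and completes the plan.
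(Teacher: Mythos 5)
Your proof is correct and is exactly the elementary verification the paper has in mind; the paper explicitly leaves the proof to the reader, and your treatment — setting up the covering condition once, then checking ideal closure, the $\theta$-based property, $\sigma$-completeness via regularity of $\sigma$, and minimality by reducing to the $\theta$-based and $\sigma$-completeness properties of the competitor — is the natural route.
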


The next proposition is just an application of the idea of Proposition~\ref{folklore}, so we leave the easy proof to the reader. It pins down exactly when a cardinal makes it into $\pcfgamma(A)$.

\begin{proposition}
\label{pcfchar}
Suppose $A$ is a progressive set of regular cardinals, and $\sigma<\theta$ with $\sigma$ regular.  Then the following statements are equivalent for a cardinal~$\lambda$:
\begin{enumerate}
\item $\lambda\in\pcf_{\Gamma(\theta,\sigma)}(A)$.
\sk
\item $\lambda\in\pcf_{\sigma\text{-complete}}(B)$ for some $B\in [A]^{<\theta}$.
\sk
\item $\lambda\in\pcf(A)$ and the generator $B_\lambda[A]$ is not in the $(\theta,\sigma)$-completion of $J_{<\lambda}[A]$.
\sk
\item $\lambda=\tcf(\prod A/ J)$ for some $\theta$-based $\sigma$-complete ideal $J$ on $A$.
\end{enumerate}
\end{proposition}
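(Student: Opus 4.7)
The plan is to use Proposition~\ref{folklore} as the hub of the argument, translating all four conditions into statements about $J_{<\lambda}[A]$ and $B_\lambda[A]$. The equivalence (1)$\Leftrightarrow$(2) is just the defining equation (\ref{thus}) read in both directions, so it suffices to establish the cycle (1)$\Rightarrow$(3)$\Rightarrow$(1) and the equivalence (1)$\Leftrightarrow$(4). None of these steps is genuinely difficult; the main bookkeeping lies in verifying that the ideals we construct are proper.

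For (1)$\Rightarrow$(3), suppose $J$ is a $\sigma$-complete ideal on $A$ with some $X$ of cardinality $<\theta$ in the dual filter $J^*$ and with $\tcf(\prod A/J)=\lambda$. Proposition~\ref{folklore} supplies $J_{<\lambda}[A]\subseteq J$ and $A\setminus B_\lambda[A]\in J$. Then $X\cap B_\lambda[A]$ must lie outside $J$ (otherwise $X$ itself would lie in $J$, contradicting $X\in J^*$), it has cardinality $<\theta$, and it cannot be covered by a union of fewer than $\sigma$ elements of $J_{<\lambda}[A]$ --- for if it were, then $\sigma$-completeness of $J$ would force it into $J$. That is precisely the assertion that $B_\lambda[A]$ is not in the $(\theta,\sigma)$-completion of $J_{<\lambda}[A]$.

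For (3)$\Rightarrow$(1), fix $X\in[B_\lambda[A]]^{<\theta}$ that cannot be covered by fewer than $\sigma$ members of $J_{<\lambda}[A]$, and let $J$ be the collection of $Y\subseteq A$ satisfying $Y\subseteq(A\setminus X)\cup\bigcup\mathcal{F}$ for some $\mathcal{F}\subseteq J_{<\lambda}[A]$ of cardinality $<\sigma$. Regularity of $\sigma$ makes $J$ a $\sigma$-complete ideal extending $J_{<\lambda}[A]$, and the non-covering hypothesis on $X$ is exactly what is needed to rule out $A\in J$, so $J$ is proper. Since $A\setminus B_\lambda[A]\subseteq A\setminus X\in J$, Proposition~\ref{folklore} gives $\tcf(\prod A/J)=\lambda$, and $X$ in the dual filter witnesses (1).

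For (1)$\Leftrightarrow$(4), the direction (4)$\Rightarrow$(1) follows by invoking $\theta$-basedness to extract an $X\in[A]^{<\theta}$ in the positive part of the witnessing ideal and then augmenting by adjoining $A\setminus X$, just as in the step above. For (1)$\Rightarrow$(4), replace the witnessing ideal $J$ by its $(\theta,\sigma)$-completion $\widehat{J}$, which by the preceding proposition is automatically $\theta$-based and $\sigma$-complete. The only point requiring care is that $\widehat{J}$ remain proper: if $A$ were in $\widehat{J}$ then $X$ itself would be covered by fewer than $\sigma$ members of $J$ and hence lie in $J$ by $\sigma$-completeness, contradicting $X\in J^*$. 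One last appeal to folklore then yields $\tcf(\prod A/\widehat{J})=\lambda$.
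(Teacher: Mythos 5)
Your proof is correct and takes exactly the route the paper intends: the paper states that the result ``is just an application of the idea of Proposition~\ref{folklore}'' and leaves the proof to the reader, and your argument uses Proposition~\ref{folklore} as the hub in precisely the way suggested, with the routine verifications (properness of the generated ideals, $\sigma$-completeness via regularity of $\sigma$, the passage to $X\cap B_\lambda[A]$ as witness) carried out cleanly.
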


In the same vein, we can characterize the $(\theta,\sigma)$-completion of $J_{<\lambda}[A]$ using pcf theory.

\begin{proposition}
Let $A$ be a progressive set of regular cardinals, let $\sigma<\theta$ with $\sigma$ regular, let $\lambda$ be regular, and let $J$ denote the $(\theta,\sigma)$-completition of $J_{<\lambda}[A]$. Then for $B\subseteq A$ we have
\begin{equation}
B\in J\Longleftrightarrow \pcfgamma(B)\subseteq\lambda.
\end{equation}
\end{proposition}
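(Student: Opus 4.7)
The plan is to handle the two implications separately using Proposition~\ref{pcfchar} applied inside a progressive subset $C\subseteq A$ of size less than $\theta$. For such $C$, the $(\theta,\sigma)$-completion of $J_{<\mu}[C]$ is just its $\sigma$-completion and $\pcfgamma(C)=\pcfsig(C)$, so part~(3) of Proposition~\ref{pcfchar} says precisely that $\mu\in\pcf(C)\setminus\pcfsig(C)$ is equivalent to $B_\mu[C]$ lying in the $\sigma$-completion of $J_{<\mu}[C]$. We also freely use $J_{<\mu}[A]\cap\mathcal{P}(C)=J_{<\mu}[C]$ and the monotonicity $\pcfsig(D)\subseteq\pcfsig(C)$ for $D\subseteq C$ (which comes from pushing a $\sigma$-complete witness ideal forward from $D$ to $C$ by declaring $C\setminus D$ null).

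For the forward direction, assume $B\in J$ and fix $C\in [B]^{<\theta}$. Since $|C|<\theta$, we may write $C=\bigcup_{i<\kappa}C_i$ with $\kappa<\sigma$ and each $C_i\in J_{<\lambda}[A]$. If $\mu=\tcf(\prod C/J')\in\pcfsig(C)$ with $J'$ a $\sigma$-complete ideal on $C$, then $\sigma$-completeness of $J'$ together with $\kappa<\sigma$ forces some $C_i\notin J'$; restricting the witnessing scale to $C_i$ shows $\mu=\tcf(\prod C_i/J'\restr C_i)\in\pcfsig(C_i)\subseteq\pcf(C_i)\subseteq\lambda$. Varying $C$ over $[B]^{<\theta}$ and invoking~(\ref{thus}) yields $\pcfgamma(B)\subseteq\lambda$.

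For the reverse direction, assume $\pcfgamma(B)\subseteq\lambda$ and fix $C\in [B]^{<\theta}$. The plan is to prove by transfinite induction on the ordinal $\max\pcf(C)$ that every such $C$ is a union of fewer than $\sigma$ sets from $J_{<\lambda}[A]$. If $\max\pcf(C)<\lambda$, one piece suffices. Otherwise set $\mu^*=\max\pcf(C)\geq\lambda$; since $\pcfsig(C)\subseteq\lambda$, we have $\mu^*\notin\pcfsig(C)$, so Proposition~\ref{pcfchar} gives $B_{\mu^*}[C]=\bigcup_{i<\tau}D_i$ with $\tau<\sigma$ and $\max\pcf(D_i)<\mu^*$, while $\max\pcf(C)=\mu^*$ gives $C\setminus B_{\mu^*}[C]\in J_{<\mu^*}[C]$. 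Each resulting piece has strictly smaller $\max\pcf$ and, by monotonicity, still satisfies $\pcfsig\subseteq\lambda$, so the inductive hypothesis supplies a $<\sigma$-decomposition of each. The main obstacle, and the place where regularity of $\sigma$ is essential, is keeping the combined count below $\sigma$: amalgamating fewer than $\sigma$ subfamilies, each of size less than $\sigma$, stays below $\sigma$ precisely because $\cf(\sigma)=\sigma$, and termination of the induction is automatic from the well-foundedness of the ordinals indexing the $\max\pcf$ values.
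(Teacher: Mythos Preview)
Your proof is correct, and your approach is genuinely more self-contained than the paper's. The paper's own argument is a two-line reduction: $B\in J$ iff every $C\in[B]^{<\theta}$ lies in the $\sigma$-completion of $J_{<\lambda}[A]$, and then the paper \emph{quotes} the result from page~1211 of Abraham--Magidor that this latter condition is equivalent to $\pcfsig(C)\subseteq\lambda$, finishing via~(\ref{thus}).

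What you do instead is reprove that Abraham--Magidor characterization from scratch. Your forward direction (splitting $C$ into $<\sigma$ pieces from $J_{<\lambda}[A]$ and using $\sigma$-completeness of the witnessing ideal to land in one piece) and your reverse direction (transfinite induction on $\max\pcf(C)$, peeling off $B_{\mu^*}[C]$ via Proposition~\ref{pcfchar} and amalgamating) together constitute exactly the argument behind the cited result. The induction is sound: each step produces fewer than $\sigma$ pieces with strictly smaller $\max\pcf$, the inductive hypothesis applies by monotonicity of $\pcfsig$, and regularity of $\sigma$ keeps the amalgamated count below $\sigma$. So your proof buys independence from the external citation at the cost of length, while the paper's proof is terse but relies on the reader having~\cite{AM} at hand.
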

\begin{proof}
A subset $B$ of $A$ is in $J$ if and only if every member of $[B]^{<\theta}$ is in the $\sigma$-completion of $J_{<\lambda}[A]$.  By page 1211 of~\cite{AM}, this happens if and only if
\begin{equation}
C\in[B]^{<\theta}\Longrightarrow \pcf_{\text{$\sigma$-complete}}(C)\subseteq\lambda,
\end{equation}
and by (\ref{thus}) this is equivalent to
\begin{equation}
\pcfgamma(B)\subseteq\lambda.
\end{equation}
\end{proof}

Moving on, what can we say about $\pcf_{\Gamma^*(\theta,\sigma)}(A)$?  In the first place, this is not an interesting concept if $A$ happens to have a maximum element, as  $\pcf_{\Gamma^*(\theta,\sigma)}(A)$ will then consist of the single element $\max(A)$.  Secondly, if $A$ does not have a maximum element but either $\cf(\sup A)<\sigma$ or $\theta\leq\cf(\sup(A))$ holds, then $\pcf_{\Gamma^*(\theta,\sigma)}(A)$ will be empty: in the former situation, there are no proper $\sigma$-complete ideals on~$A$, and in the latter all sets of cardinality less than $\theta$ are bounded in~$A$.  Thus, we will usually assume
\begin{equation}
\sigma\leq\cf(\sup A)<\theta
\end{equation}
when we discuss $\pcf_{\Gamma^*(\theta,\sigma)}(A)$.  Another difference is that $A\subseteq\pcf_{\Gamma(\theta,\sigma)}(A)$ by way of the principal ideals, but this is no longer the case with $\Gamma^*(\theta,\sigma)$.  In fact, if $A$ does not have a maximum then $A$ and $\pcf_{\Gamma^*(\theta,\sigma)}(A)$ are disjoint.  Summarizing, we have:

\begin{proposition}
Suppose $A$ is a progressive set of regular cardinals, and $\sigma<\theta$ are cardinals with $\sigma$ regular.
\begin{enumerate}
\item $\pcf_{\Gamma^*(\theta,\sigma)}(A)\subseteq\pcfgamma(A)$.
\sk
\item $A\cap\pcf_{\Gamma^*(\theta,\sigma)}(A)=\emptyset$ unless $A$ has a maximum element, in which case $\pcf_{\Gamma^*(\theta,\sigma)}(A)=\{\max(A)\}$.
\sk
\item If $A$ does not have a maximum, then $\pcf_{\Gamma^*(\theta,\sigma)}(A)\neq\emptyset$ if and only if $\sigma\leq\cf(\sup A)<\theta$.
\sk
\end{enumerate}
\end{proposition}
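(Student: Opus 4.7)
The plan is to dispatch the three parts in turn, relying on Proposition~\ref{folklore} and the scale existence theorem of pcf theory.

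Part~(1) is immediate from the definitions. For Part~(2), I split on whether $A$ has a maximum. If $A$ has maximum $\lambda$, progressivity forces $A\setminus\{\lambda\}\in J^{\bd}[A]$ (as $|A\setminus\{\lambda\}|<\min(A)\leq\lambda$ cannot be cofinal in the regular $\lambda$), so every proper $J\supseteq J^{\bd}[A]$ has $\{\lambda\}\in J^*$ and a routine check gives $\tcf(\prod A/J)=\lambda$; the ideal $\{B\subseteq A:\lambda\notin B\}$ witnesses $\lambda\in\pcf_{\Gamma^*(\theta,\sigma)}(A)$, and this must be the only such value. If instead $A$ has no maximum and $\lambda\in A\cap\pcf_{\Gamma^*(\theta,\sigma)}(A)$ is witnessed by $J$, then $\lambda<\sup A$, so $\{\tau\in A:\tau\leq\lambda\}$ is bounded in~$A$ and lies in $J^{\bd}[A]\subseteq J$; the generator $B_\lambda[A]$ is contained in this set (since $\pcf(B_\lambda[A])\subseteq\lambda^+$ forces each of its members to be $\leq\lambda$), so $B_\lambda[A]\in J$. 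Combining with $A\setminus B_\lambda[A]\in J$ from Proposition~\ref{folklore} yields $A\in J$, contradicting properness.

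For the forward direction of~(3), suppose $J$ witnesses $\lambda\in\pcf_{\Gamma^*(\theta,\sigma)}(A)$ and $X\in J^*$ has $|X|<\theta$. Properness prevents $X\in J^{\bd}[A]\subseteq J$, so $X$ is unbounded in~$A$ and $\cf(\sup A)\leq|X|<\theta$; and if $\cf(\sup A)<\sigma$, then $A$ would be a union of fewer than $\sigma$ bounded (hence $J$-null) sets, which $\sigma$-completeness rules out. For the reverse direction, let $\mu=\sup A$ and $\kappa=\cf(\mu)$, and fix any cofinal $A'\subseteq A$ of cardinality~$\kappa$. Then $A'$ is progressive, and every cofinal $C\subseteq A'$ is likewise progressive and cofinal in~$\mu$; the scale existence theorem gives $\mu^+\in\pcf(C)$ for every such~$C$, forcing $J_{<\mu^+}[A']\subseteq J^{\bd}[A']$, so Proposition~\ref{folklore} yields $\tcf(\prod A'/J^{\bd}[A'])=\mu^+$. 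Now define $J=\{B\subseteq A:B\cap A'\in J^{\bd}[A']\}$; this is a $\kappa$-complete ideal on~$A$ extending $J^{\bd}[A]$, with $A'\in J^*$ and $|A'|=\kappa<\theta$, and the restriction map $f\mapsto f\restr A'$ induces an order-isomorphism of $\prod A/J$ modulo equivalence with $\prod A'/J^{\bd}[A']$ modulo equivalence. Hence $\tcf(\prod A/J)=\mu^+$, witnessing $\mu^+\in\pcf_{\Gamma^*(\theta,\sigma)}(A)$.

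The main obstacle is the reverse direction of~(3). A first attempt using $J_{<\max\pcf(A')}[A']$ founders because bounded subsets of~$A'$ can have pcf as large as $\max\pcf(A')$ itself---the very failure-of-compactness phenomenon this paper investigates---so that ideal need not contain $J^{\bd}[A']$. Routing through $J^{\bd}[A']$ and the scale existence theorem sidesteps the issue cleanly.
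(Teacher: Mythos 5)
Your proof is correct in its conclusions and is in the same spirit as the paper, which offers only a discursive justification in the paragraph preceding the proposition (no formal proof is given there). Parts~(1) and~(2), and the forward direction of~(3), are handled cleanly and fill in the details the paper leaves implicit; in particular your use of $B_\lambda[A]\subseteq\{\tau\in A:\tau\le\lambda\}$ together with Proposition~\ref{folklore} to show $A\cap\pcf_{\Gamma^*(\theta,\sigma)}(A)=\emptyset$ is a nice crisp argument.

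There is one small gap in the reverse direction of~(3). When you write that ``$J_{<\mu^+}[A']\subseteq J^{\bd}[A']$, so Proposition~\ref{folklore} yields $\tcf(\prod A'/J^{\bd}[A'])=\mu^+$,'' you are invoking only half of the hypothesis of Proposition~\ref{folklore}: you still need $A'\setminus B_{\mu^+}[A']\in J^{\bd}[A']$, i.e.\ that $A'\setminus B_{\mu^+}[A']$ is bounded. This does follow from the same ideas --- if $A'\setminus B_{\mu^+}[A']$ were cofinal in $\mu$ then $\mu^+$ would belong to $\pcf(A'\setminus B_{\mu^+}[A'])$, contradicting the defining property of the generator --- but you should state it. (Alternatively, note that since $A'\setminus B_{\max\pcf(A')}[A']=\emptyset\in J^{\bd}[A']$, Proposition~\ref{folklore} already tells you $\tcf(\prod A'/J^{\bd}[A'])$ exists, and the inclusion $J_{<\mu^+}[A']\subseteq J^{\bd}[A']$ tells you it is $\ge\mu^+$; that is already enough to conclude $\pcf_{\Gamma^*(\theta,\sigma)}(A)\ne\emptyset$, so the proposition follows even without pinning the value down to $\mu^+$ exactly.) Your closing remark about the ``first attempt'' is a little off target: the danger of bounded subsets having large $\pcf$ would threaten $J^{\bd}[A']\subseteq J_{<\mu^+}[A']$, which you never need; it has no bearing on the inclusion $J_{<\mu^+}[A']\subseteq J^{\bd}[A']$ that your argument actually uses.
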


Let us assume $A$ does not have a maximum and $\sigma\leq\cf(\sup A)<\theta$.  The theme of our discussion of $\pcfgamma(A)$ was that the $(\theta,\sigma)$-completion of $J_{<\lambda}[A]$ has the same role that~$J_{\lambda}[A]$ does in $\pcf(A)$.  For $\pcf_{\Gamma^*(\theta,\sigma)}(A)$, the analogous role is played by the $(\theta,\sigma)$-complete ideal generated by $J_{<\lambda}[A]$ together with the bounded subsets of~$A$. For now, let us call this ideal~$J$.  Note that membership in $J$ is easily described: a set $B$ is in $J$ if and only if there is a $\zeta<\sup(A)$ such that $B\setminus\zeta$ is in the $(\theta,\sigma)$-completion of $J_{<\lambda}[A]$.  Turning this around, we see $B\in J^+$ if and only if, for every $\zeta<\sup(A)$,  $B\setminus\zeta$ has a subset of cardinality less than~$\theta$ that is not covered by a union of fewer than $\sigma$ sets from $J_{<\lambda}[A]$. Since $\cf(\sup A)<\theta$, it follows that $B\in J^+$ if and only if $B$ has an unbounded set of cardinality less than~$\theta$ that cannot be covered by a union of fewer than~$\sigma$ sets from~$J_{<\lambda}[A]$.  This makes the following proposition easy, and we leave the proof to the reader.

\begin{proposition}
\label{aboveprop}
Let $A$ be a progressive set of regular cardinals without a maximum, and suppose $\sigma<\theta$ with $\sigma$ regular.  The following statements are equivalent for a cardinal $\lambda$ in $\pcf(A)$:
\begin{enumerate}
\item $\lambda\in\pcf_{\Gamma^*(\theta,\sigma)}(A)$
\sk
\item $B_\lambda[A]$ is not in the $(\theta,\sigma)$-complete ideal generated by $J_{<\lambda}[A]$ and $J^{\bd}[A]$
\sk
\item for all $\zeta<\sup(A)$, $B_\lambda[A]\setminus\zeta$ is not in the $(\theta,\sigma)$-completion of~$J_{<\lambda}[A]$.
\sk
\item $\lambda\in\pcfgamma(A\setminus\zeta)$ for all $\zeta<\sup(A)$.
\sk 
\item The generator $B_\lambda[A]$ has an unbounded subset of cardinality less than~$\theta$ that is not in the $\sigma$-completion of~$J_{<\lambda}[A]$.
\end{enumerate}
\end{proposition}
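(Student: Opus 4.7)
The overall strategy is to reduce each of (2)--(5) to a pcf-theoretic condition, using Proposition~\ref{folklore}, Proposition~\ref{pcfchar}, the preceding characterization of the $(\theta,\sigma)$-completion in terms of $\pcfgamma$, and the explicit unpacking of the ideal $J$ that was carried out in the paragraph just before the statement of this proposition.

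The equivalences (2)$\Leftrightarrow$(3)$\Leftrightarrow$(5) are essentially bookkeeping: that paragraph already describes $J$-membership as ``some tail of $B$ lies in the $(\theta,\sigma)$-completion of $J_{<\lambda}[A]$'' and $J^+$-membership as ``$B$ has an unbounded subset of cardinality less than $\theta$ that is not a union of fewer than $\sigma$ sets from $J_{<\lambda}[A]$.'' Taking $B=B_\lambda[A]$ converts these observations directly into (2), (3), and (5).

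For (3)$\Leftrightarrow$(4), the plan is to apply the preceding proposition to $B_\lambda[A]\setminus\zeta$: membership in the $(\theta,\sigma)$-completion of $J_{<\lambda}[A]$ is equivalent to $\pcfgamma(B_\lambda[A]\setminus\zeta)\subseteq\lambda$. Because $\max\pcf(B_\lambda[A])=\lambda$, the only member of $\pcfgamma(B_\lambda[A]\setminus\zeta)$ that could fail to lie below $\lambda$ is $\lambda$ itself, so (3) reformulates as ``$\lambda\in\pcfgamma(B_\lambda[A]\setminus\zeta)$ for every $\zeta<\sup(A)$.'' The inclusion $B_\lambda[A]\setminus\zeta\subseteq A\setminus\zeta$ supplies (3)$\Rightarrow$(4); for the converse, any witness $B\in[A\setminus\zeta]^{<\theta}$ for $\lambda\in\pcfsig$ may be shrunk to $B\cap B_\lambda[A]$ via Proposition~\ref{folklore}, since the $\sigma$-complete ideal on $B$ with true cofinality $\lambda$ must contain $B\setminus B_\lambda[A]$.

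The only equivalence requiring actual construction is (1)$\Leftrightarrow$(5). For (1)$\Rightarrow$(5), given a witness $I$ and an $X\in I^*$ with $|X|<\theta$, set $Y=X\cap B_\lambda[A]$; then $A\setminus Y$ is the union of $A\setminus X$ and $A\setminus B_\lambda[A]$, both in $I$ by Proposition~\ref{folklore}, so $Y\in I^*$. The assumption $J^{bd}[A]\subseteq I$ forces $Y$ unbounded, and the restriction of $I$ to $Y$ witnesses $\lambda\in\pcfsig(Y)$. For the converse, take $Y$ as in (5), fix a $\sigma$-complete ideal $I_Y$ on $Y$ with $\tcf(\prod Y/I_Y)=\lambda$, let $I_Y^+$ be the $\sigma$-complete ideal on $Y$ generated by $I_Y\cup J^{bd}[Y]$, and define $I$ on $A$ by $Z\in I\iff Z\cap Y\in I_Y^+$. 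The resulting $I$ is $\sigma$-complete, extends $J^{bd}[A]$, has $Y\in I^*$, and satisfies $\tcf(\prod A/I)=\lambda$ provided $I_Y^+$ is proper. The main---and essentially only---obstacle is to rule out a bounded $B\subseteq Y$ with $Y\setminus B\in I_Y$, which would collapse $I_Y^+$. I anticipate handling this by taking $I_Y$ itself to be the $\sigma$-complete ideal on $Y$ generated by $J_{<\lambda}[Y]$ (which witnesses $\tcf=\lambda$ by Proposition~\ref{folklore}, since $Y\subseteq B_\lambda[A]$ gives $B_\lambda[Y]=Y$) and then combining the hypothesis $\sigma\leq\cf(\sup Y)$ with (5) to preclude any $<\sigma$-union of bounded sets and $J_{<\lambda}[Y]$-sets from covering $Y$.
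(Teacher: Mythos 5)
Most of what you do is correct and matches the intended route: the paragraph preceding the proposition gives the unpacking needed for (2)$\Leftrightarrow$(3), the earlier characterization of the $(\theta,\sigma)$-completion in terms of $\pcfgamma$ handles (3)$\Leftrightarrow$(4) exactly as you describe, and your direction (1)$\Rightarrow$(5) (intersecting the small dual set $X$ with $B_\lambda[A]$ and restricting) is fine. The chain (1)$\Leftrightarrow$(2)$\Leftrightarrow$(3)$\Leftrightarrow$(4) together with (3)$\Rightarrow$(5) is solid.

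The gap is in getting \emph{from} (5) back to the other conditions, which in your write-up appears both in the ``bookkeeping'' step (5)$\Rightarrow$(2) and in the construction for (5)$\Rightarrow$(1). You have correctly identified the danger -- properness of $I_Y^+$ -- but the proposed fix does not close it. Taking $I_Y$ to be the $\sigma$-complete ideal generated by $J_{<\lambda}[Y]$, properness of $I_Y^+$ requires that for every $\zeta<\sup A$ the tail $Y\setminus\zeta$ fails to be covered by a union of fewer than $\sigma$ sets from $J_{<\lambda}[Y]$; but (5) only says this of $Y$ itself. These are not obviously the same, because $Y\cap\zeta$ need not lie in $J_{<\lambda}[A]$: a bounded subset of $B_\lambda[A]$ can perfectly well have $\max\pcf$ equal to $\lambda$ (the generator $B_\lambda[A]$ being $J_{<\lambda}$-large says nothing about \emph{which} of its subsets are $J_{<\lambda}$-large, and in particular a bounded piece can carry all the $\lambda$-weight). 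So ``$Y\setminus\zeta$ is covered by fewer than $\sigma$ sets from $J_{<\lambda}[Y]$'' does not combine with $Y\cap\zeta$ to produce a cover of $Y$, and $\sigma\leq\cf(\sup Y)$ by itself does not supply the missing piece. To repair the argument you need an extra observation ruling out $\lambda\in\pcfsig(Y\cap\zeta)$ for bounded $Y\cap\zeta$ -- equivalently, showing that the witness $Y$ in (5) can be taken with every tail $Y\setminus\zeta$ outside the $\sigma$-completion of $J_{<\lambda}[A]$, which is what (3)$\Rightarrow$(5) actually produces (via the diagonal union $\bigcup_\alpha C_\alpha$) and what your construction genuinely uses. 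As it stands, the justification sketched at the end of your proposal (``combining $\sigma\le\cf(\sup Y)$ with (5) to preclude any $<\sigma$-union...'') does not go through.
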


As an immediate corollary, we obtain the following useful result:

\begin{corollary}
\label{starchar}
Suppose $A$ is a progressive set of regular cardinals, and $\sigma<\theta$ with $\sigma$ regular.  Then
\begin{equation}
\pcf_{\Gamma^*(\theta,\sigma)}(A)=\bigcap_{\zeta<\sup A}\pcfgamma(A\setminus\zeta).
\end{equation}
\end{corollary}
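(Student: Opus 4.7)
The plan is to recognize that Corollary~\ref{starchar} is an immediate consequence of the equivalence $(1)\Leftrightarrow(4)$ in the preceding Proposition~\ref{aboveprop}. Both sides of the asserted equality are subsets of $\pcf(A)$, and for any such $\lambda$ that proposition tells us $\lambda\in\pcf_{\Gamma^*(\theta,\sigma)}(A)$ if and only if $\lambda\in\pcfgamma(A\setminus\zeta)$ for every $\zeta<\sup A$, which is exactly the criterion for membership in $\bigcap_{\zeta<\sup A}\pcfgamma(A\setminus\zeta)$. So unwinding the definitions is all that is required.

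Since Proposition~\ref{aboveprop} was established under the running assumption that $A$ has no maximum and $\sigma\leq\cf(\sup A)<\theta$, I would add a short piece of bookkeeping to cover the remaining cases catalogued in the earlier summary proposition. When $A$ has a maximum element $m$, both sides collapse to $\{m\}$: the cardinal $m$ lies in every $A\setminus\zeta$ for $\zeta<\sup A=m$ and hence enters the intersection via the principal ideal on $\{m\}$, while any other candidate $\lambda$ fails to persist once $\zeta$ is large enough that $A\setminus\zeta$ reduces to a tail consisting only of $m$ (or only of elements above~$\lambda$, killing membership by the progressivity of~$A$). In the remaining degenerate cases $\cf(\sup A)<\sigma$ or $\theta\leq\cf(\sup A)$, the left-hand side is empty by the summary proposition, and a direct inspection of the cardinality/cofinality obstructions identified there shows the right-hand intersection also collapses to~$\emptyset$.

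I do not anticipate any real obstacle, since the pcf-theoretic content has already been packaged into Proposition~\ref{aboveprop}; what remains is purely a matter of translating condition~(4) into the language of intersections and checking the book-keeping for the excluded configurations of~$A$.
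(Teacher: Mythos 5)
Your proposal is correct and follows essentially the same route as the paper: handle the case where $A$ has a maximum separately (both sides collapse to $\{\max A\}$), and otherwise quote the equivalence $(1)\Leftrightarrow(4)$ of Proposition~\ref{aboveprop}. The only difference is cosmetic: you flag the degenerate configurations $\cf(\sup A)<\sigma$ and $\theta\leq\cf(\sup A)$ explicitly, whereas the paper simply applies Proposition~\ref{aboveprop} to every $A$ without a maximum; this is fine, since the chain $(1)\Leftrightarrow(2)\Leftrightarrow(3)\Leftrightarrow(4)$ in that proposition does not actually use the running hypothesis $\sigma\leq\cf(\sup A)<\theta$ (only the equivalence with $(5)$ does), so your extra bookkeeping, while harmless, is not strictly needed.
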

\begin{proof}
If $A$ has a maximum, then both sides of the equation are equal to $\{\max(A)\}$.  If we are not in a vacuous situation, Proposition~\ref{aboveprop} gives us what we want.
\end{proof}

We close this section with a variant of a well-known result for $\pcf(A)$, and the proof uses a standard construction. The single appearance of $\Gamma$ rather than $\Gamma^*$ in the second part is not an error.

\begin{proposition}
\label{transitive}
Let $A$ be a progressive set of regular cardinals without a maximum, and let $\sigma<\theta$ with $\sigma$ regular.
\begin{enumerate}
\item If $B\subseteq\pcfgamma(A)$ and $|B|<\min(B)$, then $\pcf_{\Gamma(\cf(\theta),\sigma)}(B)\subseteq\pcf_{\Gamma(\theta,\sigma)}(A)$.
\sk
\item If $B\subseteq\pcf_{\Gamma^*(\theta,\sigma)}(A)$  and $|B|<\min(B)$, then $\pcf_{\Gamma(\cf(\theta),\sigma)}(B)\subseteq\pcf_{\Gamma^*(\theta,\sigma)}(A)$.
\end{enumerate}
\end{proposition}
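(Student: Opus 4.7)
The plan is to prove~(1) first and then derive~(2) from it via Corollary~\ref{starchar}. For the derivation, the hypothesis $B\subseteq\pcf_{\Gamma^*(\theta,\sigma)}(A)$ combined with Corollary~\ref{starchar} gives $B\subseteq\pcfgamma(A\setminus\zeta)$ for every $\zeta<\sup(A)$. Applying~(1) with $A\setminus\zeta$ in place of $A$ yields $\pcf_{\Gamma(\cf(\theta),\sigma)}(B)\subseteq\pcfgamma(A\setminus\zeta)$ for every such~$\zeta$, and Corollary~\ref{starchar} applied once more (now in the opposite direction) then gives the desired containment $\pcf_{\Gamma(\cf(\theta),\sigma)}(B)\subseteq\pcf_{\Gamma^*(\theta,\sigma)}(A)$.

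For~(1), I would fix $\lambda\in\pcf_{\Gamma(\cf(\theta),\sigma)}(B)$ and apply Proposition~\ref{pcfchar} twice: first to produce some $B_0\in [B]^{<\cf(\theta)}$ with $\lambda\in\pcfsig(B_0)$, and then, for each $\mu\in B_0\subseteq\pcfgamma(A)$, to produce an $A_\mu\in [A]^{<\theta}$ with $\mu\in\pcfsig(A_\mu)$. Set $A^*=\bigcup_{\mu\in B_0}A_\mu\subseteq A$. Since fewer than $\cf(\theta)$ many cardinals all smaller than $\theta$ must be bounded in $\theta$, the cardinality book-keeping gives $|A^*|<\theta$, so another application of Proposition~\ref{pcfchar} shows $\pcfsig(A^*)\subseteq\pcfgamma(A)$. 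It therefore suffices to show $\lambda\in\pcfsig(A^*)$.

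At this stage $A^*\subseteq A$ is progressive, $B_0\subseteq\pcfsig(A^*)$ by monotonicity (extend each witnessing $\sigma$-complete ideal on $A_\mu$ to one on $A^*$), and the bounds $|B_0|\le|B|<\min(B)\le\min(B_0)$ make $B_0$ itself a progressive set of regular cardinals. The remaining claim $\lambda\in\pcfsig(A^*)$ is the $\sigma$-complete analog of the standard pcf transitivity theorem, and this is the step I expect to be the main (though not surprising) obstacle. The classical argument transfers a scale witnessing $\lambda=\tcf(\prod B_0/J)$ down to $A^*$ by gluing scales on the generators $B_\mu[A^*]$ according to the scale on $B_0$, and I would check that the resulting ideal on $A^*$ --- built from the $\sigma$-complete ideals $J_{<\mu}[A^*]$ for $\mu\in B_0$ together with the pullback of the $\sigma$-complete ideal $J$ on $B_0$ --- remains $\sigma$-complete. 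Once the pieces are lined up this verification is routine, and it delivers $\lambda\in\pcfsig(A^*)\subseteq\pcfgamma(A)$, completing~(1).
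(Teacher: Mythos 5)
Your proof is correct, and for part (1) it is, once unwound, essentially the paper's argument. The paper builds a composite ideal $J$ on all of $A$ directly via $X\in J\Longleftrightarrow\{b\in B:X\notin J_b\}\in I$ (with $I$ witnessing $\lambda\in\pcf_{\Gamma(\cf(\theta),\sigma)}(B)$ and each $J_b$ witnessing $b\in\pcfgamma(A)$), then checks $\sigma$-completeness and that $J^*$ contains a set of size $<\theta$. Your version pushes the same idea through a small set $A^*=\bigcup_{\mu\in B_0}A_\mu$: the cardinality bookkeeping you perform to get $|A^*|<\theta$ from $|B_0|<\cf(\theta)$ and $|A_\mu|<\theta$ is precisely the verification that the paper's $J^*$ contains a small set, and it is exactly where $\Gamma(\cf(\theta),\sigma)$ on $B$ (rather than $\Gamma(\theta,\sigma)$) is required, a point both arguments hinge on. The residual step you flag --- the $\sigma$-complete transitivity $\lambda\in\pcfsig(A^*)$ --- is the same composite-ideal gluing and is indeed routine. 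For part (2) you take a genuinely different, though equally quick, route: you black-box (1), apply it to every tail $A\setminus\zeta$, and use Corollary~\ref{starchar} in both directions; the paper instead notices at once that the composite ideal $J$ contains $J^{\bd}[A]$ whenever each $J_b$ does. Your derivation of (2) is more modular, while the paper's reuses the construction without a detour through tails; both are fine.
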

\begin{proof}
For part (1), suppose $\lambda\in\pcf_{\Gamma(\cf(\theta),\sigma)}(B)$ witnessed by the ideal $I$.  For each $b\in B$ we choose an ideal $J_b$ witnessing~$b\in\pcfgamma(A)$, and let $J$ be the ideal on~$A$ defined by
\begin{equation}
X\in J\Longrightarrow\{b\in B:X\notin J_b\}\in I.
\end{equation}
Then
\begin{equation}
\lambda=\tcf(\prod A/ J)
\end{equation}
(see Claim~1.11 on page 12 of {\em Cardinal Arithmetic}, or the proof of Theorem~3.12 in~\cite{AM}), and it is readily checked that $J$ is $\sigma$-complete and the dual filter contains a set of size less than $\theta$ (this is the part that requires the use of $\Gamma(\cf(\theta),\sigma)$ on~$B$ rather than simply $\Gamma(\theta,\sigma)$).  Similarly, if each $J_b$ contains the bounded subsets of~$A$ then $J$ will as well, and (2) follows.
\end{proof}

If we limit ourselves to the case where $\theta$ is regular, we obtain the following corollary with a neater formulation.

\begin{corollary}
\label{regularcase}
Let $A$ be a progressive set of regular cardinals without a maximum, and let $\sigma<\theta$ be regular cardinals.
\begin{enumerate}
\item If $B\subseteq\pcfgamma(A)$ and $|B|<\min(B)$, then $\pcfgamma(B)\subseteq\pcfgamma(A)$.
\sk
\item If $B\subseteq\pcf_{\Gamma^*(\theta,\sigma)}(A)$  and $|B|<\min(B)$, then $\pcfgamma(B)\subseteq\pcf_{\Gamma^*(\theta,\sigma)}(A)$.
\end{enumerate}
\end{corollary}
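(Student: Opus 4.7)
The plan is to derive this as an immediate consequence of Proposition~\ref{transitive}, exploiting the single additional hypothesis that $\theta$ is regular.

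First, I would observe that under the hypotheses, $B$ qualifies as a progressive set of regular cardinals: by Proposition~2.2(1) (the elementary containment chain for $\pcfgamma$), $B\subseteq\pcfgamma(A)\subseteq\pcf(A)$ consists of regular cardinals, and $|B|<\min(B)$ is assumed outright. The same observation applies in part~(2) via the containment $\pcf_{\Gamma^*(\theta,\sigma)}(A)\subseteq\pcfgamma(A)$ from the earlier proposition on starred pcf. So Proposition~\ref{transitive} can be applied to $B$ in either setting.

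Next, since $\theta$ is regular we have $\cf(\theta)=\theta$, so the operator $\pcf_{\Gamma(\cf(\theta),\sigma)}(B)$ appearing on the left-hand side of the conclusions of Proposition~\ref{transitive} coincides with $\pcfgamma(B)$. Part~(1) of Proposition~\ref{transitive} then delivers
\begin{equation}
\pcfgamma(B)=\pcf_{\Gamma(\cf(\theta),\sigma)}(B)\subseteq\pcfgamma(A),
\end{equation}
which is exactly part~(1) of the corollary, and part~(2) follows in the same way from part~(2) of Proposition~\ref{transitive}.

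There is no real obstacle here: the only substantive content is the identification $\cf(\theta)=\theta$, which removes the asymmetry in Proposition~\ref{transitive} between the parameter used on $B$ and that used on $A$. In effect, this corollary is simply packaging the most frequently used instance of the previous proposition in a form that is more convenient for later reference.
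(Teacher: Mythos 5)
Your proof is correct and matches the paper's intent exactly: the corollary is stated as an immediate specialization of Proposition~\ref{transitive}, and the only content needed is the observation that $\cf(\theta)=\theta$ when $\theta$ is regular, so $\pcf_{\Gamma(\cf(\theta),\sigma)}(B)=\pcfgamma(B)$.
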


\section{On $\sup\pcfgamma(A)$ and $\sup\pcf_{\Gamma^*(\theta,\sigma)}(A)$}

Our attention now turns to characterizing the cardinal $\sup\pcf_{\Gamma(\theta,\sigma)}(A)$ (and its starred variant) for a progressive set of regular cardinals~$A$. Part of our work repeats some material from Section~3 of~\cite{400} (Chapter~X of~\cite{cardarith}). In particular,  Proposition~\ref{supchar} below is essentially Claim~3.2 from~\cite{400} but the proof  we gives runs smoother, as we can take advantage of the work done in the previous section.

\begin{definition}
Let $A$ be a non-empty set of ordinals.  A set $F\subseteq\prod A$ is said to be a
$(\theta,\sigma)$-cover of $\prod A$ if for any $B\subseteq A$ of cardinality $<\theta$ and $g\in\prod B$, there is an $F_0\subseteq F$ such that
\begin{equation}
|F_0|<\sigma
\end{equation}
and
\begin{equation}
(\forall b\in B)(\exists f\in F_0)[g(b)<f(b)].
\end{equation}
The $(\theta,\sigma)$-cofinality of $\prod A$, denoted $\cf^\sigma_{<\theta}(\prod A)$, is defined to be the minimum size of a $(\theta,\sigma)$-cover of~$\prod A$.  These
notions extend in a natural way to structures of the form $\prod A/ J$ for an ideal~$J$ as well.
\end{definition}

Cardinals of the form $\cf^\sigma_{<\theta}(\prod A)$ for various sets $A$ are a major player in Chapter~X of {\em Cardinal Arithmetic}, and much more information can be found there.  

\begin{proposition}[Claim 3.2 of~\cite{400}]
\label{supchar}
Let $A$ be a progressive set of regular cardinals.  Then
\begin{equation}
\sup\pcf_{\Gamma(\theta,\sigma)}(A)=\cf^\sigma_{<\theta}(\prod A).
\end{equation}
\end{proposition}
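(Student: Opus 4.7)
The plan is to prove the two inequalities separately.  For the bound $\sup\pcfgamma(A)\leq\cf^\sigma_{<\theta}(\prod A)$, fix $\lambda\in\pcfgamma(A)$ witnessed by a $\sigma$-complete ideal $J$ on $A$ with some $B\in J^*$ of cardinality less than~$\theta$, and fix a scale $\langle f_\alpha:\alpha<\lambda\rangle$ for $\prod A/J$.  Let $F$ be an arbitrary $(\theta,\sigma)$-cover of $\prod A$.  For each $\alpha<\lambda$, the cover property applied to $B$ and $f_\alpha\restr B$ produces some $F_\alpha\in [F]^{<\sigma}$ for which $B=\bigcup\{\,\{b\in B:f_\alpha(b)<f(b)\}:f\in F_\alpha\,\}$.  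Since $J\restr B$ is $\sigma$-complete and does not contain~$B$, at least one piece in this union is $J$-positive, yielding some $f^*_\alpha\in F_\alpha$ with $f^*_\alpha\not\leq_J f_\alpha$.  If $|F|<\lambda$, a pigeonhole on the assignment $\alpha\mapsto f^*_\alpha$ would give some fixed $f^*\in F$ occurring unboundedly often; but the cofinality of the scale then forces $f^*<_J f_\alpha$ for all sufficiently large~$\alpha$, contradicting $f^*\not\leq_J f_\alpha$ for those $\alpha$.  Hence $\lambda\leq|F|$, and taking the supremum over~$\lambda$ completes this direction.

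For the reverse inequality, let $\mu=\sup\pcfgamma(A)$.  I would construct a $(\theta,\sigma)$-cover of $\prod A$ of size at most~$\mu$ by pooling scales: for each $\lambda\in\pcfgamma(A)$ choose a scale $\bar f^\lambda=\langle f^\lambda_\alpha:\alpha<\lambda\rangle$ for $\prod A$ modulo the ideal generated by $J_{<\lambda}[A]$ together with $A\setminus B_\lambda[A]$, and set $F=\bigcup_\lambda\ran(\bar f^\lambda)$; using the fact that $|\pcfgamma(A)|\leq\mu$ one checks that $|F|\leq\mu$.  To verify that $F$ is a $(\theta,\sigma)$-cover, fix $B\in [A]^{<\theta}$ and $g\in\prod B$.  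By Proposition~\ref{pcfchar}, $\pcfsig(B)\subseteq\pcfgamma(A)$, so scales for every $\lambda\in\pcfsig(B)$ appear in~$F$.  The idea is to decompose $B$ recursively using generators $B_{\lambda_i}[B]$ with $\lambda_i\in\pcfsig(B)$: at each stage use $\bar f^{\lambda_i}\restr B$ to dominate $g$ on $B_{\lambda_i}[B]$ modulo $J_{<\lambda_i}$, and then iterate on the uncovered remainder, producing fewer than $\sigma$ scale elements from~$F$ whose covering sets $\{b:g(b)<f(b)\}$ exhaust~$B$.

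The main obstacle is showing this recursion terminates in fewer than~$\sigma$ steps and actually delivers a cover in the pointwise sense of the $(\theta,\sigma)$-cover definition.  Termination hinges on the $\sigma$-completeness of the ideal generated by $J_{<\mu_B}[B]$ (where $\mu_B=\sup\pcfsig(B)\leq\mu$) together with the bounded subsets of~$B$, combined with the fact that $\pcfsig(B)$ has a largest element, itself lying in $\pcfgamma(A)$.  The interplay between the generators $B_\lambda[A]$ used to build the scales in~$F$ and the generators $B_\lambda[B]$ used for the decomposition has to be handled carefully; the familiar identity $B_\lambda[B]=B_\lambda[A]\cap B\pmod{J_{<\lambda}[A]}$ makes the restricted scales behave as needed on each piece, but checking that the covering functions produced at successive stages can be amalgamated into a single set $F_0\in [F]^{<\sigma}$ is the delicate pcf-theoretic heart of the argument.
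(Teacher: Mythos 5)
Your proof of the inequality $\sup\pcfgamma(A)\leq\cf^\sigma_{<\theta}(\prod A)$ is correct and essentially the standard argument: apply the covering property to the set $B\in J^*$ witnessing membership in $\pcfgamma(A)$, use $\sigma$-completeness of $J$ to extract a $J$-positive piece, and pigeonhole against the scale. That part is fine.

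The other direction is where the paper does its real work, and your sketch has a genuine gap that you correctly flag but do not close. The recursive decomposition of $B$ via generators $B_{\lambda_i}[B]$ runs into two problems. First, there is no guarantee that the cardinals $\lambda_i$ you want to peel off actually belong to $\pcfgamma(A)$: the natural choice $\lambda_i=\max\pcf(B_i)$ of the $i$-th remainder need not lie in $\pcfsig(B_i)$, and if it does not, no scale for it appears in your pool $F$. Second, even when the $\lambda_i$ do cooperate, nothing you say controls the number of iterations: the remainders land in $J_{<\lambda_i}[B]$, which is not $\sigma$-complete, so there is no a priori bound of $<\sigma$ on the length of the recursion. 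Your stated termination mechanism also leans on the assertion that $\pcfsig(B)$ has a largest element, but that is false in general --- Lemma~\ref{curious} in this paper shows only that $\pcfsig(B)$ has a maximum \emph{or} $\sup\pcfsig(B)$ has cofinality below~$\sigma$, and the second alternative can occur.

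The paper avoids the recursion entirely, and this is worth internalizing because the same device recurs throughout pcf theory. Take the same family $F$ you built from the scales. Supposing $F$ fails to be a $(\theta,\sigma)$-cover as witnessed by some $g$, let $X$ be the collection of sets on which a \emph{single} $f\in F$ dominates $g$, and let $I$ be the $(\theta,\sigma)$-complete ideal generated by~$X$. The failure hypothesis says precisely that $I$ is a proper ideal. Now pick $\lambda\in\pcf(A)$ \emph{least} with $B_\lambda[A]\notin I$; minimality together with pcf compactness gives $J_{<\lambda}[A]\subseteq I$, so $B_\lambda[A]$ lies outside the $(\theta,\sigma)$-completion of $J_{<\lambda}[A]$, and Proposition~\ref{pcfchar} delivers $\lambda\in\pcfgamma(A)$. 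But then some $f^\lambda_\alpha\in F$ dominates $g$ on all of $B_\lambda[A]$, forcing $B_\lambda[A]\in X\subseteq I$, a contradiction. This ``least bad generator'' argument packages the entire transfinite bookkeeping that your recursion would have to do by hand; I would replace your second half with it.
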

\begin{proof}
One direction (``$\leq$'') of this proposition is very easy, so we concentrate on the harder direction.

For each $\lambda\in\pcf(A)$, we fix a sequence $\langle f^\lambda_\alpha:\alpha<\lambda\rangle$ in $\prod A$ such that for any $g\in\prod A$ there is an $\alpha<\lambda$ with $g\restr B_\lambda[A]<f^\lambda_\alpha\restr B_\lambda[A]$.\footnote{This is standard pcf theory: see Theorem 4.4 of~\cite{AM}, and use the fact that $\lambda=\max\pcf(B_\lambda[A])$.} Next,  we define
\begin{equation}
F:=\bigcup_{\lambda\in\pcf_{\Gamma(\theta,\sigma)}(A)}\{f^\lambda_\alpha:\alpha<\lambda\}.
\end{equation}
Clearly $|F|\leq\sup\pcf_{\Gamma(\theta,\sigma)}(A)$, so we finish if we show $F$ is $(\theta,\sigma)$-cofinal in $\prod A$.

Suppose this were not the case, as witnessed by the function $g\in\prod A$. We define a collection $X$ of subsets of $A$ by setting
\begin{equation}
B\in X \Longleftrightarrow (\exists f\in F)(\forall b\in B)[g(b)<f(b)],
\end{equation}
and let $I$ be the $(\theta,\sigma)$-complete ideal on $A$ generated by $X$.

Our assumption on $g$ implies that $I$ is in fact a proper $(\theta,\sigma)$-complete ideal on~$A$, so we can choose $\lambda\in\pcf(A)$ least such that $B_\lambda[A]\notin I$.
Note that this choice of~$\lambda$ implies
\begin{equation}
J_{<\lambda}[A]\subseteq I,
\end{equation}
and so $B_\lambda[A]$ is not in the $(\theta,\sigma)$-completion of $J_{<\lambda}[A]$.  We conclude
\begin{equation}
\lambda\in\pcf_{\Gamma(\theta,\sigma)}(A)
\end{equation}
by way of Proposition~\ref{pcfchar}, and so there is an $f\in F$ (in fact, $f= f^\lambda_\alpha$ for some $\alpha<\lambda$) such that $g(b)<f(b)$ for all $b\in B_\lambda[A]$.
But then $B_\lambda[A]$ would be in $X\subseteq I$, and this contradicts our choice of~$\lambda$.
\end{proof}

We take note of the following special case as being of independent interest, used several times in the literature (see for example Theorem~8.6 of~\cite{AM}, or some of the proofs in~\cite{460}).
\begin{corollary}
Let $A$ be a progressive set of regular cardinals, and let $\sigma$ be regular.  Then $\sup\pcfsig(A)$ is the $<\sigma$-cofinality of $\prod A$, that is, the minimum cardinality of a family $F\subseteq\prod A$ such that the functions formed by taking the supremum of fewer than~$\sigma$ functions from $F$ are cofinal in~$\prod A$.
\end{corollary}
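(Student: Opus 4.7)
The plan is to specialize Proposition~\ref{supchar} to the parameter value $\theta = |A|^+$. Since $A$ is progressive, $|A| < \min(A) \leq \sup(A)$, so in particular $|A| < \theta$. Part~(4) of the earlier proposition on parameter identifications then gives $\pcf_{\Gamma(|A|^+,\sigma)}(A) = \pcfsig(A)$. Applying Proposition~\ref{supchar} yields
\begin{equation*}
\sup\pcfsig(A) = \cf^\sigma_{<|A|^+}\left(\prod A\right).
\end{equation*}

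What remains is to verify that $\cf^\sigma_{<|A|^+}(\prod A)$ coincides with the $<\sigma$-cofinality described in the statement of the corollary. This is a direct unwinding of the definition of $(\theta,\sigma)$-cover at $\theta = |A|^+$: every $B \subseteq A$ automatically satisfies $|B| \leq |A| < |A|^+$, so the side condition in the definition disappears. Thus a family $F \subseteq \prod A$ is a $(|A|^+,\sigma)$-cover precisely when, for every $g \in \prod A$, there is some $F_0 \subseteq F$ with $|F_0| < \sigma$ such that for each $a \in A$ some $f \in F_0$ has $g(a) < f(a)$. But this is exactly the condition that the pointwise supremum $\sup F_0$ dominates $g$ everywhere on $A$, so $F$ is a $(|A|^+,\sigma)$-cover if and only if the suprema of fewer than $\sigma$ members of $F$ form a cofinal subset of $\prod A$ in the pointwise ordering.

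There is no real obstacle here; the corollary is a routine repackaging of Proposition~\ref{supchar} in more traditional language. The only choice to be made is the parameter $\theta = |A|^+$, which is forced by the requirement that the $(\theta,\sigma)$-cover condition become the unrestricted $<\sigma$-cofinality condition on $\prod A$.
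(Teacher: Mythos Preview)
Your proof is correct and is exactly the specialization the paper has in mind: the corollary is stated immediately after Proposition~\ref{supchar} as a ``special case'' with no further argument, and choosing $\theta=|A|^+$ together with the identification $\pcf_{\Gamma(|A|^+,\sigma)}(A)=\pcfsig(A)$ is precisely how one extracts it. Your unwinding of the $(\theta,\sigma)$-cover condition at $\theta=|A|^+$ into the $<\sigma$-cofinality statement is accurate and is the only thing left to say.
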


We need to extend the above work in order to characterize $\sup\pcf_{\Gamma^*(\theta,\sigma)}(A)$ in a similar way.  For the remainder of this section, we assume $\sigma<\theta$ are both regular cardinals, and we will not bother to track what occurs if $\theta$ is singular.

\begin{proposition}
\label{tangerine}
If $A$ is a progressive set of regular cardinals and $\pcf_{\Gamma^*(\theta,\sigma)}(A)$ is non-empty, then
\begin{equation}
\label{lastminute}
\sup\pcf_{\Gamma^*(\theta,\sigma)}(A)=\lim_{\zeta<\sup A}\sup\pcfgamma(A\setminus\zeta).
\end{equation}
\end{proposition}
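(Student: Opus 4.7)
Let $\mu=\lim_{\zeta<\sup A}\sup\pcfgamma(A\setminus\zeta)$; this sequence is nonincreasing in $\zeta$, hence stabilizes. The easy direction $\sup\pcf_{\Gamma^*(\theta,\sigma)}(A)\leq\mu$ follows from Corollary~\ref{starchar}: any $\lambda\in\pcf_{\Gamma^*(\theta,\sigma)}(A)$ lies in $\pcfgamma(A\setminus\zeta)$ for every $\zeta$, so $\lambda\leq\sup\pcfgamma(A\setminus\zeta)$ for every such $\zeta$, and taking the infimum gives $\lambda\leq\mu$.

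For the reverse inequality, I plan to imitate the ideal-theoretic argument from the proof of Proposition~\ref{supchar}, modified so that the auxiliary ideal extends $J^{\bd}[A]$ and therefore produces a minimal pcf element in $\pcf_{\Gamma^*(\theta,\sigma)}(A)$ via Proposition~\ref{aboveprop}(2). For each $\lambda\in\pcf_{\Gamma^*(\theta,\sigma)}(A)$ fix a sequence $\langle f^\lambda_\alpha:\alpha<\lambda\rangle$ in $\prod A$ with the standard dominating property on $B_\lambda[A]$, and let $F$ be their union, so $|F|\leq\sup\pcf_{\Gamma^*(\theta,\sigma)}(A)$. Assume for contradiction that $|F|<\mu$. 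Fix $\zeta^*<\sup A$ beyond which $\sup\pcfgamma(A\setminus\zeta)=\mu$; by Proposition~\ref{supchar} applied to $A\setminus\zeta$, $\cf^\sigma_{<\theta}(\prod(A\setminus\zeta))=\mu>|F|$ for every $\zeta\geq\zeta^*$, so choose witnesses $Y_\zeta\in[A\setminus\zeta]^{<\theta}$ and $g_\zeta\in\prod Y_\zeta$ that cannot be dominated by any $<\sigma$-sized subfamily of $F$.

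The crucial step is to amalgamate these local failures into a \emph{single} unbounded witness. Fix an increasing cofinal sequence $\langle\zeta_\alpha:\alpha<\cf(\sup A)\rangle$ in $\sup A$ with $\zeta_0\geq\zeta^*$, set $Y^*=\bigcup_\alpha Y_{\zeta_\alpha}$, and define $g^*(b)=\sup\{g_{\zeta_\alpha}(b):b\in Y_{\zeta_\alpha}\}$; regularity of $\theta$ together with $\cf(\sup A)<\theta$ yields $|Y^*|<\theta$, the inequality $b>|A|\geq\cf(\sup A)$ for $b\in A$ keeps $g^*$ in $\prod Y^*$, and $Y^*$ is unbounded in $A$. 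A quick check shows $g^*$ fails to be ``tail-dominated'' by $F$: for every $\zeta<\sup A$ and every $F_0\subseteq F$ of size $<\sigma$, any $\zeta_\alpha>\zeta$ supplies some $b\in Y_{\zeta_\alpha}\subseteq Y^*\setminus\zeta$ with $g^*(b)\geq g_{\zeta_\alpha}(b)\geq\max_{f\in F_0}f(b)$. Extend $g^*$ arbitrarily to $g\in\prod A$, put $K=\{X\subseteq A:(\exists f\in F)(\forall b\in X)[g(b)<f(b)]\}$, and let $I$ be the $(\theta,\sigma)$-complete ideal on $A$ generated by $K\cup J^{\bd}[A]$. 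If $A\in I$, then $Y^*$ itself (of size $<\theta$) is covered by fewer than $\sigma$ sets from $K\cup J^{\bd}[A]$; the bounded pieces amalgamate to a bounded subset of $A$ (using $\sigma\leq\cf(\sup A)$), leaving a tail of $Y^*$ dominated by $<\sigma$ members of $F$, contradicting the tail-failure property. So $I$ is proper. Let $\lambda\in\pcf(A)$ be minimal with $B_\lambda[A]\notin I$; as in Proposition~\ref{supchar}, $J_{<\lambda}[A]\subseteq I$, and combining this with $J^{\bd}[A]\subseteq I$ and the $(\theta,\sigma)$-completeness of $I$, Proposition~\ref{aboveprop}(2) yields $\lambda\in\pcf_{\Gamma^*(\theta,\sigma)}(A)$. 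Some $f^\lambda_\alpha\in F$ then pointwise dominates $g$ on $B_\lambda[A]$, placing $B_\lambda[A]\in K\subseteq I$ and contradicting the choice of $\lambda$. I expect the main obstacle to be exactly this amalgamation step: without producing a single unbounded small-size witness $(Y^*,g^*)$, the ideal argument would only recover $\lambda\in\pcfgamma(A\setminus\zeta)$ for a specific $\zeta$, which is not enough to land $\lambda$ in $\pcf_{\Gamma^*(\theta,\sigma)}(A)$.
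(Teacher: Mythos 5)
Your proof is correct, but it takes a genuinely different route from the paper's. The paper argues via pcf transitivity: it reduces to the case where $\sup\pcfgamma(A\setminus\zeta)$ is constant with value $\kappa$, picks $\lambda_\alpha=\min(\pcfgamma(A\setminus\zeta_\alpha)\setminus\tau)$ along a cofinal sequence $\langle\zeta_\alpha\rangle$, and splits into two cases: if $\langle\lambda_\alpha\rangle$ stabilizes, Corollary~\ref{starchar} finishes; if not, one forms $B=\{\lambda_\alpha\}$, observes $\cf(\sup B)=\cf(\sup A)$, and applies Corollary~\ref{regularcase} to push any element of $\pcf_{\Gamma^*(\theta,\sigma)}(B)$ down into $\pcf_{\Gamma^*(\theta,\sigma)}(A)$. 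Your argument instead re-runs the ideal-theoretic proof of Proposition~\ref{supchar} with $J^{\bd}[A]$ folded in from the start: the crucial added ingredient is amalgamating the local witnesses $(Y_\zeta,g_\zeta)$ into a single unbounded $(Y^*,g^*)$ of size $<\theta$, which is exactly what makes the generated $(\theta,\sigma)$-complete ideal $I$ proper despite containing all bounded sets, so that $J^{\bd}[A]\subseteq I$ lets Proposition~\ref{aboveprop}(2) land the minimal $\lambda$ inside $\pcf_{\Gamma^*(\theta,\sigma)}(A)$. Both approaches require $\theta$ regular (you use it in the amalgamation step; the paper uses it through Corollary~\ref{regularcase}), and both tacitly invoke $|F|\leq\sup\pcf_{\Gamma^*(\theta,\sigma)}(A)$, a standard pcf convention also used in the paper's proof of Proposition~\ref{supchar}. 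The trade-off: your argument is more self-contained and makes Theorem~\ref{starcof} an almost immediate corollary of the same amalgamation idea, whereas the paper's is shorter and puts the transitivity machinery of Proposition~\ref{transitive} to work.
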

\begin{proof}
If $A$ has a maximum, then both sides are equal to~$\max(A)$.  Thus, we may assume $A$ is unbounded in $\sup(A)$ and $\sigma\leq\cf(\sup(A))<\theta$.  One direction (``$\leq$'') of our desired inequality is immediate by Corollary~\ref{starchar}, so we work on the other direction.

Notice that the sequence $\langle\sup\pcfgamma(A\setminus\zeta):\zeta<\sup A\rangle$ is non-increasing, so it is eventually constant.  Removing an initial segment from $A$ does not affect $\pcf_{\Gamma^*(\theta,\sigma)}(A)$, so we may as well assume the sequence is constant with value some~$\kappa$, that is,
\begin{equation}
\sup\pcfgamma(A\setminus\zeta)=\kappa\text{ for all }\zeta<\sup A.
\end{equation}
Given $\tau<\kappa$, it suffices to produce some $\lambda\in\pcf_{\Gamma^*(\theta,\sigma)}(A)$ with $\tau\leq\lambda$.

Let us fix an increasing sequence $\langle\zeta_\alpha:\alpha<\cf(\sup A)\rangle$ cofinal in~$A$, and define
\begin{equation}
\lambda_\alpha:=\min(\pcfgamma(A\setminus\zeta_\alpha))\setminus\tau.
\end{equation}
Our assumptions guarantee that $\lambda_\alpha$ is defined, and the resulting sequence of cardinals is non-decreasing.

By thinning things out, we may assume that the sequence is either constant, or strictly increasing.  In the former case, we are done as the corresponding cardinal is in~$\pcf_{\Gamma^*(\theta,\sigma)}(A)$ by Corollary~\ref{starchar}.  Thus, we can assume that the various $\lambda_\alpha$ are distinct.

Let $B=\{\lambda_\alpha:\alpha<\cf(\sup A)\}$.  Since $\cf(\sup B)=\cf(\sup A)$, we know that $\pcf_{\Gamma^*(\theta,\sigma)}(B)$ is non-empty.  Given $\lambda$ in this set,
Corollary~\ref{regularcase} tells us
\begin{equation}
\lambda\in\pcf_{\Gamma^*(\theta,\sigma)}(A)
\end{equation}
as well.  Since $\tau$ must be less than $\lambda$, we are done.
\end{proof}

\begin{corollary}
\label{yay}
If $A$ is a progressive set of regular cardinals and $\sup\pcf_{\Gamma^*(\theta,\sigma)}(A)$ is non-empty, then
\begin{equation}
\sup\pcf_{\Gamma^*(\theta,\sigma)}(A)=\sup\pcf_{\Gamma(\theta,\sigma)}(A\setminus\zeta),
\end{equation}
for all sufficiently large $\zeta<\sup(A)$.   In particular, there is a $\zeta<\sup A$ so that
\begin{equation}
\sup\pcf_{\Gamma^*(\theta,\sigma)}(A)=\cf^\sigma_{<\theta}(\prod A\setminus\zeta).
\end{equation}
\end{corollary}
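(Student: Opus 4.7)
My plan is to extract this directly from Proposition \ref{tangerine} together with Proposition \ref{supchar}, so the argument should be brief. If $A$ happens to have a maximum, then both equations reduce to the trivial identity involving $\max(A)$, so I will assume $A$ is unbounded in $\sup(A)$ and that $\sigma\leq\cf(\sup A)<\theta$, which is the non-vacuous setting for $\pcf_{\Gamma^*(\theta,\sigma)}(A)$.

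The key observation, which was already used in the proof of Proposition \ref{tangerine}, is that the sequence
\[
\langle\sup\pcfgamma(A\setminus\zeta):\zeta<\sup A\rangle
\]
is non-increasing in $\zeta$, simply because $A\setminus\zeta'\subseteq A\setminus\zeta$ whenever $\zeta\leq\zeta'$. Any non-increasing sequence of cardinals is eventually constant, so I fix some $\zeta_0<\sup A$ and a cardinal $\kappa$ such that $\sup\pcfgamma(A\setminus\zeta)=\kappa$ for every $\zeta$ with $\zeta_0\leq\zeta<\sup A$. The limit appearing in equation (\ref{lastminute}) of Proposition \ref{tangerine} then collapses to $\kappa$, so
\[
\sup\pcf_{\Gamma^*(\theta,\sigma)}(A)=\kappa=\sup\pcfgamma(A\setminus\zeta)
\]
holds for every $\zeta\in[\zeta_0,\sup A)$. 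This establishes the first displayed equation in the statement.

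For the ``in particular'' clause, I note that $A\setminus\zeta$ is itself a progressive set of regular cardinals for any $\zeta<\sup A$, since $|A\setminus\zeta|\leq|A|<\min(A)\leq\min(A\setminus\zeta)$. Proposition \ref{supchar} therefore applies to $A\setminus\zeta$ and yields
\[
\sup\pcfgamma(A\setminus\zeta)=\cf^\sigma_{<\theta}\bigl(\textstyle\prod (A\setminus\zeta)\bigr),
\]
and combining this with the first equation at any fixed $\zeta\geq\zeta_0$ gives the second identity. No genuine obstacle arises here; all the real work was done inside Propositions \ref{tangerine} and \ref{supchar}, and the present corollary is essentially bookkeeping about when a non-increasing sequence of cardinals stabilizes.
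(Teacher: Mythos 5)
Your proof is correct and is essentially the argument the paper intends; the paper gives no proof for this corollary, treating it as immediate from Proposition~\ref{tangerine} together with Proposition~\ref{supchar}, and your bookkeeping (monotonicity of the tail-sup sequence, its eventual stabilization identifying the limit, then applying~\ref{supchar} to the progressive tail $A\setminus\zeta$) fills that in exactly as intended.
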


Finally, we come to the promised characterization of $\sup\pcf_{\Gamma^*(\theta,\sigma)}(A)$:

\begin{theorem}
\label{starcof}
Let $A$ be a progressive set of regular cardinals with $\pcf_{\Gamma^*(\theta,\sigma)}(A)$  non-empty. Then
\begin{equation}
\sup\pcf_{\Gamma^*(\theta,\sigma)}(A)=\cf^\sigma_{<\theta}(\prod A/ J^{\bd}[A]).
\end{equation}
\end{theorem}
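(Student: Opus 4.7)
My plan is to verify the two inequalities separately, noting that both sides coincide with $\max(A)$ when $A$ has a maximum, so I may assume $A$ is unbounded in $\sup A$ and $\sigma\leq\cf(\sup A)<\theta$.

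For $\cf^\sigma_{<\theta}(\prod A/J^{\bd}[A])\leq\sup\pcf_{\Gamma^*(\theta,\sigma)}(A)$, I invoke Corollary~\ref{yay} to fix $\zeta_0<\sup A$ with $\sup\pcf_{\Gamma^*(\theta,\sigma)}(A)=\cf^\sigma_{<\theta}(\prod(A\setminus\zeta_0))$.  Take a $(\theta,\sigma)$-cover $F_0$ of $\prod(A\setminus\zeta_0)$ of that minimum size and extend each $f\in F_0$ arbitrarily on $A\cap\zeta_0$ to a function in $\prod A$.  For $B\in[A]^{<\theta}$ and $g\in\prod B$, applying the cover property of $F_0$ to $g\restr(B\setminus\zeta_0)$ produces some $F'\subseteq F_0$ with $|F'|<\sigma$ whose uncovered set is contained in $B\cap\zeta_0$, which is bounded in $A$ and so lies in $J^{\bd}[A]$.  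Thus $F_0$ is a $(\theta,\sigma)$-cover of $\prod A/J^{\bd}[A]$.

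For the reverse inequality I mimic the strategy of Proposition~\ref{supchar}, exploiting the boundedness built into the starred variant.  Fix $\lambda\in\pcf_{\Gamma^*(\theta,\sigma)}(A)$ witnessed by a $\sigma$-complete ideal $J\supseteq J^{\bd}[A]$, a set $B\in J^*\cap[A]^{<\theta}$, and a scale $\langle f_\alpha:\alpha<\lambda\rangle$ for $\prod A/J$.  The first observation is that $B$ must be unbounded in $A$: if $B$ were bounded then $B\in J^{\bd}[A]\subseteq J$, which together with $A\setminus B\in J$ would force $A\in J$, contradicting properness.  Now suppose $F$ is a $(\theta,\sigma)$-cover of $\prod A/J^{\bd}[A]$ with $|F|<\lambda$.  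Since $\lambda$ is regular and the $f_\alpha$ are a scale, choose $g\in\prod A$ with $f<_J g$ for every $f\in F$.  Apply the cover property to $g\restr B$ to get $F_0\subseteq F$ with $|F_0|<\sigma$ and
\[
X:=\{b\in B:\neg(\exists f\in F_0)[g(b)<f(b)]\}
\]
bounded in $A$, so $X\in J$.  For each $f\in F_0$, $f<_J g$ gives $W_f:=\{b\in A:f(b)\geq g(b)\}\in J$; by $\sigma$-completeness $W:=\bigcup_{f\in F_0}W_f\in J$, and clearly $B\setminus X\subseteq W$.  Hence $B\subseteq X\cup W\in J$, and combined with $A\setminus B\in J$ this forces $A\in J$, a contradiction.

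The main technical point is the interplay between $J$ and $J^{\bd}[A]$: converting the ``bounded in $A$'' conclusion of the cover property modulo $J^{\bd}[A]$ into genuine membership in $J$ works precisely because $J\supseteq J^{\bd}[A]$, and the unboundedness of $B\in J^*$, which one needs to make the cover property informative, is likewise forced by this extension.  Everything else is routine bookkeeping with the scale and $\sigma$-completeness.
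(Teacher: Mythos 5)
Your proof is correct, and for the harder inequality it takes a genuinely different route from the paper. The paper reduces everything to showing that any family $(\theta,\sigma)$-cofinal in $\prod A/J^{\bd}[A]$ is in fact $(\theta,\sigma)$-cofinal in $\prod(A\setminus\zeta)$ for some fixed tail, then invokes Corollary~\ref{yay}; the key step is a diagonalization against a cofinal sequence of tails, where one takes a pointwise supremum $g=\sup_{\alpha<\cf(\sup A)}g_\alpha$ of witnesses to failure and uses progressiveness ($\cf(\sup A)\leq|A|<\min(A)$) to keep $g\in\prod A$. You instead show directly that every $\lambda\in\pcf_{\Gamma^*(\theta,\sigma)}(A)$ bounds $\cf^\sigma_{<\theta}(\prod A/J^{\bd}[A])$ from below, by unwinding the definition to a $\sigma$-complete $J\supseteq J^{\bd}[A]$ with a small dual-filter set $B$ and a $\lambda$-scale, and then diagonalizing against a hypothetical small cover: picking $g$ dominating $F$ mod $J$, splitting $B$ into the bounded uncovered part $X\in J^{\bd}[A]\subseteq J$ and the part $W\in J$ absorbed by $\sigma$-completeness, and deriving $A\in J$. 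Your route is more local (one scale argument per $\lambda$, no limit over tails, and no reliance on Corollary~\ref{yay} for this direction), while the paper's route yields the slightly stronger structural fact that the cofinality modulo $J^{\bd}[A]$ is actually realized on a tail, which it then converts via Corollary~\ref{yay}. Both arguments are sound; your version arguably isolates more cleanly where $J\supseteq J^{\bd}[A]$ and the $\sigma$-completeness of $J$ are used.
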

\begin{proof}
Given Corollary~\ref{yay}, it suffices to prove there is a  $\zeta<\sup(A)$ such that
\begin{equation}
\cf^\sigma_{<\theta}(\prod A/ J^{\bd}[A]) =\cf^\sigma_{<\theta}\bigl(\prod (A\setminus\zeta)\bigr).
\end{equation}
Any family $F$ that is $(\theta,\sigma)$-cofinal in a tail of $A$ will be $(\theta,\sigma)$-cofinal in $\prod A/J^{\bd}[A]$, so we need only show
that if $F\subseteq\prod A$ is $(\theta,\sigma)$-cofinal in $\prod A/J^{\bd}[A]$, then there is a $\zeta<\sup(A)$ such that $F$ is  $(\theta,\sigma)$-cofinal in $\prod A\setminus\zeta$.  Suppose this were not the case, and let $\langle\zeta_\alpha:\alpha<\cf(\sup(A))\rangle$ be increasing and cofinal in~$\sup(A)$.  For each $\alpha<\cf(\sup(A))$,  we can choose a function $g_\alpha\in\prod A$ such that for any subset $F_0$ of $F$ of cardinality less than~$\sigma$, there is an $a\in A\setminus\zeta_\alpha$ such that
\begin{equation}
\label{eqncon}
(\forall f\in F_0)\left[f(a)\leq g_\alpha(a)\right].
\end{equation}
Since $\cf(\sup(A))\leq|A|<\min(A)$, it follows that
\begin{equation}
g:=\sup\{g_\alpha:\alpha<\cf(\sup(A))\}
\end{equation}
is in $\prod A$.  By our assumptions, there is a family $F_0\subseteq F$ of cardinality $<\sigma$ and $\zeta<\sup(A)$ such that
\begin{equation}
\label{eqncon2}
(\forall a\in A\setminus\zeta)(\exists f\in F_0)[g(a)<f(a)].
\end{equation}
This quickly yields a contradiction:  if we choose $\alpha$ with $\zeta<\zeta_\alpha$, then there is an $a\in A\setminus\zeta$ such that~(\ref{eqncon}) holds for our choice of~$F_0$, and since $g_\alpha(a)\leq g(a)$ we contradict~(\ref{eqncon2}).
\end{proof}

\section{On $\pp_{\Gamma(\theta,\sigma)}(\mu)$}

In this section, we move beyond considering the structure of $\pcf_{\Gamma(\theta,\sigma)}(A)$ for progressive $A$, and look instead at more general statements in cardinal arithmetic involving pseudopowers $\pp_{\Gamma(\theta,\sigma)}(\mu)$ for singular cardinals~$\mu$.   We have touched on these matters in the introduction, but now we give the official definiton:
\begin{definition}
Suppose $\mu$, $\theta$, and $\sigma$ are infinite cardinals with $$\sigma=\cf(\sigma)\leq\cf(\mu)<\theta\leq\mu.$$ $\PP_{\Gamma(\theta,\sigma)}(\mu)$ is the collection of all cardinals $\lambda$ such that there are a  cofinal $A\subseteq\mu\cap\reg$ of cardinality $<\theta$ and a $\sigma$-complete
ideal $J$ on $A$ containing the bounded subsets of $A$ with
\begin{equation}
\lambda=\tcf(\prod A/ J).
\end{equation}
We define
\begin{equation}
\pp_{\Gamma(\theta,\sigma)}(\mu)=\sup\PP_{\Gamma(\theta,\sigma)}(\mu).
\end{equation}
\end{definition}
The restriction to  $\sigma\leq \cf(\mu)<\theta$ is in place so that we avoid trivialities.
Also, these notions are clearly connected to the things we discussed in the preceding section.  For example, if $\mu$ is singular, $A$ is cofinal in $\mu\cap\reg$, and $\sigma\leq\cf(\mu)<\theta$, then
\begin{equation}
\label{hah}
\pcf_{\Gamma^*(\theta,\sigma)}(A)\subseteq\PP_{\Gamma(\theta,\sigma)}(\mu).
\end{equation}
And conversely, $\lambda\in \PP_{\Gamma(\theta,\sigma)}(\mu)$ by definition means there is an $A$ cofinal in $\mu\cap\reg$ such that
\begin{equation}
\lambda\in\pcf_{\Gamma^*(\theta,\sigma)}(A).
\end{equation}

Thus, $\ppgamma(\mu)$ is the supremum of all cardinals that appear in $\pcf_{\Gamma^*(\theta,\sigma)}(A)$ for some cofinal $A\subseteq\mu\cap\reg$ (and we can even require $|A|<\theta$).   Moreover, for any $A$ cofinal in $\mu\cap\reg$ satisfying $|A|<\mu$, we know
\begin{equation}
\cf^\sigma_{<\theta}\left(\prod A/J^{\bd}[A]\right)\leq\ppgamma(\mu)
\end{equation}
by way of Theorem~\ref{starcof}.  In Theorem~\ref{theorem1} below, we show that the ideal $J^{\bd}[A]$ can be removed if $\min(A)$ is sufficiently large.

Shelah works out many basic properties of~$\PP_{\Gamma(\theta,\sigma)}(\mu)$ and $\pp_{\Gamma(\theta,\sigma)}(\mu)$ in Section~3 of Chapter~3 in~\cite{cardarith}. In particular, he establishes $\PP_{\Gamma(\theta,\sigma)}(\mu)$ is an interval of regular cardinals with minimum~$\mu^+$.   Our aim in this section is to make sure some useful properties are documented; most of what we do here is implicit in Shelah's work even if he never explicitly states the results.  Again, the groundwork we did in the previous section makes most of the proofs here quite easy.

\begin{theorem}
\label{theorem1}
Suppose $\mu$ is singular, and $\aleph_0\leq\sigma\leq\cf(\mu)<\cf(\theta)\leq\theta<\mu$.  Then there is an $\eta<\mu$
such that
\begin{equation}
\sup\pcfgamma(A)\leq\ppgamma(\mu)
\end{equation}
whenever $A$ is a progressive subset of $[\eta,\mu)\cap\reg$.
\end{theorem}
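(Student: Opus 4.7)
My plan uses Theorem~\ref{starcof} and Corollary~\ref{yay} to produce a canonical cofinal progressive set, then reduces the general claim by splitting. Let $\kappa:=\ppgamma(\mu)$. First, fix a cofinal progressive $A^*\subseteq\mu\cap\reg$ of cardinality $\cf(\mu)<\theta$. Inclusion~(\ref{hah}) and Theorem~\ref{starcof} give $\sup\pcf_{\Gamma^*(\theta,\sigma)}(A^*)\le\kappa$, and Corollary~\ref{yay} supplies $\eta_0<\mu$ with $\sup\pcfgamma(A^*\setminus\eta_0)\le\kappa$. Replacing $A^*$ by this tail, I assume $\sup\pcfgamma(A^*)\le\kappa$ and $\min A^*\ge\theta$.

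Given a progressive $A\subseteq[\eta,\mu)\cap\reg$ (where $\eta\ge\min A^*$ is to be pinned down below) and $\lambda\in\pcfgamma(A)$, Proposition~\ref{pcfchar} furnishes $B\in[A]^{<\theta}$ with $\lambda\in\pcfsig(B)$. Since $|B|<\theta\le\eta\le\min B$, the set $B$ is itself progressive in $[\eta,\mu)\cap\reg$. I form $C:=B\cup A^*$, a cofinal progressive subset of $[\eta,\mu)\cap\reg$ of cardinality less than $\theta$; Corollary~\ref{yay} applied to $C$ produces $\zeta^*<\mu$ with $\sup\pcfgamma(C\setminus\zeta^*)\le\kappa$. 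If $\lambda\in\pcfgamma(C\setminus\zeta')$ for every $\zeta'<\mu$, Proposition~\ref{aboveprop}(4) puts $\lambda\in\pcf_{\Gamma^*(\theta,\sigma)}(C)$ and hence $\lambda\le\kappa$. Otherwise some $\zeta_\lambda<\mu$ witnesses $\lambda\notin\pcfgamma(C\setminus\zeta_\lambda)$; by Proposition~\ref{pcfchar}(3) this forces $B_\lambda[C]\cap\zeta_\lambda$ out of the $(\theta,\sigma)$-completion of $J_{<\lambda}[C]$, placing $\lambda\in\pcfgamma(B_\lambda[C]\cap\zeta_\lambda)$, a bounded progressive $X\subseteq[\eta,\zeta_\lambda)\cap\reg$ of cardinality less than $\theta$.

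The main obstacle is bounding $\lambda$ in this last case, which constrains the choice of $\eta$. I would enlarge $\eta$ past the supremum of those singular $\zeta<\mu$ (with $\sigma\le\cf(\zeta)<\theta$) for which $\pp_{\Gamma(\theta,\sigma)}(\zeta)>\kappa$, so that $\max\pcf(X)$ in the bounded case is controlled by $\pp_{\Gamma(\theta,\sigma)}(\sup X)\le\kappa$ via an iterated application of the same splitting. The claim that this supremum lies below $\mu$ admits a proof by contradiction: if cofinally many such $\zeta_i<\mu$ existed, then witnesses $(A_i,J_i)$ with $\tcf(\prod A_i/J_i)>\kappa$ could be pasted---using the hypothesis $\cf(\mu)<\cf(\theta)$ to keep the union of cardinality less than~$\theta$---into a cofinal progressive $A^{**}\subseteq\mu\cap\reg$ of cardinality less than $\theta$; since each $J_i$ extends $J^{\bd}[A_i]$, the tcf values survive into $\pcfgamma(A^{**}\setminus\zeta_0)$ for every $\zeta_0<\mu$, contradicting the stabilization given by Corollary~\ref{yay} and Theorem~\ref{starcof} applied to $A^{**}$. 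The delicate point of writing this up is verifying this last pasting: both the cardinal arithmetic bounding $|A^{**}|$ and the verification that tcf values persist to tails of $A^{**}$.
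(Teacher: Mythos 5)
Your proof is correct in outline, but it is not really a different route: the load-bearing step is hidden in your last paragraph, and that step \emph{is} the paper's proof. To see that the singular $\zeta<\mu$ with $\sigma\le\cf\zeta<\theta$ and $\pp_{\Gamma(\theta,\sigma)}(\zeta)>\kappa$ are bounded below $\mu$, you fix a cofinal $\cf(\mu)$-sequence of such $\zeta_i$ with witnesses $(A_i,J_i)$, union the $A_i$ into a cofinal progressive $A^{**}\subseteq\mu\cap\reg$ of cardinality $<\theta$ (here $\cf(\mu)<\cf(\theta)$ is used), observe that because each $J_i\supseteq J^{\bd}[A_i]$ the values $\tcf(\prod A_i/J_i)>\kappa$ persist into $\pcfgamma(A^{**}\setminus\zeta)$ for every $\zeta<\mu$, and then contradict $\sup\pcf_{\Gamma^*(\theta,\sigma)}(A^{**})\le\ppgamma(\mu)$. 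The paper runs exactly this pasting, just aimed directly at the negation of the theorem: it picks $A_\alpha\subseteq(\mu_\alpha,\mu)\cap\reg$ with $|A_\alpha|<\theta$ and $\sup\pcfgamma(A_\alpha)>\kappa$, sets $A=\bigcup_\alpha A_\alpha$, uses Proposition~\ref{tangerine} to pass from ``$\sup\pcfgamma(A\setminus\zeta)>\kappa$ for all $\zeta$'' to ``$\sup\pcf_{\Gamma^*(\theta,\sigma)}(A)>\kappa$'', and contradicts~(\ref{hah}). The rest of your scaffolding --- fixing $A^*$, forming $C=B\cup A^*$, the dichotomy on whether $\lambda$ persists to every tail of $C$, the extraction of the bounded set $B_\lambda[C]\cap\zeta_\lambda$, and the worried ``iterated application of the same splitting'' --- is all sound but superfluous once the pasting lemma is in hand, since boundedness of the bad $\zeta$ already yields the $\eta$ the theorem asks for. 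The cleaner move is to drop the reduction to bounded sets and apply the pasting argument to the theorem's negation directly, as the paper does.
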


It should be clear that whenever $A$ is a cofinal progressive subset of $\mu\cap\reg$, then
\begin{equation}
\sup\pcf_{\Gamma^*(\theta,\sigma)}(A)\leq\ppgamma(\mu)	
\end{equation}
just by the definitions involved, but the conclusion of the theorem is more general.  It speaks about all progressive subsets of $[\eta,\mu)\cap\reg$, not just the unbounded ones, and moves us from $\Gamma^*(\theta,\sigma)$ to $\Gamma(\theta,\sigma)$. Said another way, the theorem asserts that $\pp_{\Gamma(\theta,\sigma)}(\mu)$ provides a bound on the $(\theta,\sigma)$-cofinality of any progressive $A$ drawn from the tail $[\eta,\mu)\cap\reg$.

\begin{proof}
Let $\kappa=\ppgamma(\mu)$, and let $\langle\mu_\alpha:\alpha<\cf(\mu)\rangle$ be increasing and cofinal in ~$\mu$.  Assume by way of contradiction that the theorem fails,
so for each $\alpha<\cf(\mu)$ we can find $A_\alpha\subseteq (\mu_\alpha,\mu)\cap\reg$  such that
\begin{equation}
|A_\alpha|<\theta,
\end{equation}
and
\begin{equation}
\kappa<\sup\pcfgamma(A_\alpha).
\end{equation}
If we define
\begin{equation}
A:=\bigcup_{\alpha<\cf(\mu)} A_\alpha,
\end{equation}
then $|A|<\theta$, $A$ is cofinal in $\mu\cap\reg$, and for all $\zeta<\mu=\sup(A)$, we have
\begin{equation}
\kappa<\sup\pcfgamma(A\setminus\zeta).
\end{equation}
But this implies
\begin{equation}
\kappa<\sup\pcf_{\Gamma^*(\theta,\sigma)}(A)
\end{equation}
by Proposition~\ref{tangerine}.  But we have a contradiction: (\ref{hah}) tells us
\begin{equation}
\pcf_{\Gamma^*(\theta,\sigma)}(A)\subseteq\PP_{\Gamma(\theta,\sigma)}(A)
\end{equation}
and so
\begin{equation}
\sup\pcf_{\Gamma^*(\theta,\sigma)}(A)\leq\ppgamma(A)=\kappa,
\end{equation}
as well, but this is absurd.
\end{proof}

\noindent Note that in fact this theorem yields a characterization of~$\ppgamma(\mu)$ as a type of $\lim\sup$.

\begin{corollary}
With $\mu$, $\theta$, and $\sigma$  as above, the following two cardinals are equal:
\begin{enumerate}
\item $\ppgamma(\mu)$, and
\sk
\item $\min\{\sup\{\cf^\sigma_{<\theta}(\prod A):A\subseteq(\eta,\mu)A\cap{\sf Reg}, |A|<\min(A)\}:\eta<\mu\}$.
\end{enumerate}
\end{corollary}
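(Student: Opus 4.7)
The plan is to establish both directions, relying chiefly on Theorem~\ref{theorem1} for one inequality and on the definition of $\PP_{\Gamma(\theta,\sigma)}(\mu)$ together with Corollary~\ref{starchar} for the other. Throughout, let $S(\eta)$ denote the inner supremum $\sup\{\cf^\sigma_{<\theta}(\prod A):A\subseteq(\eta,\mu)\cap\reg,\ |A|<\min(A)\}$; clearly $\eta\mapsto S(\eta)$ is non-increasing as $\eta$ grows, which is what will make the outer minimum actually attained.

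For the inequality $(2)\leq(1)$, I would apply Theorem~\ref{theorem1} directly to produce an $\eta_0<\mu$ such that $\sup\pcfgamma(A)\leq\ppgamma(\mu)$ for every progressive $A\subseteq[\eta_0,\mu)\cap\reg$. Combining this with Proposition~\ref{supchar}, which identifies $\sup\pcfgamma(A)$ with $\cf^\sigma_{<\theta}(\prod A)$, gives $S(\eta_0)\leq\ppgamma(\mu)$, so the minimum on the right is at most $\ppgamma(\mu)$.

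For the inequality $(1)\leq(2)$, I would fix an arbitrary $\eta<\mu$ and show that every $\lambda\in\PP_{\Gamma(\theta,\sigma)}(\mu)$ is bounded by $S(\eta)$. By definition, $\lambda$ is witnessed by some cofinal $A'\subseteq\mu\cap\reg$ with $|A'|<\theta$ and a $\sigma$-complete ideal $J\supseteq J^{\bd}[A']$ on $A'$ with $\lambda=\tcf(\prod A'/J)$, so that $\lambda\in\pcf_{\Gamma^*(\theta,\sigma)}(A')$. Setting $\zeta:=\max(\eta,|A'|^+)$, which is below $\mu$ since $|A'|<\theta\leq\mu$ and $\mu$ is singular, the set $A:=A'\setminus\zeta$ is a progressive cofinal subset of $(\eta,\mu)\cap\reg$. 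By Corollary~\ref{starchar}, removing an initial segment does not disturb $\pcf_{\Gamma^*(\theta,\sigma)}$, so $\lambda\in\pcf_{\Gamma^*(\theta,\sigma)}(A)\subseteq\pcfgamma(A)$, and therefore
\begin{equation*}
\lambda\leq\sup\pcfgamma(A)=\cf^\sigma_{<\theta}(\prod A)\leq S(\eta)
\end{equation*}
by Proposition~\ref{supchar}. Taking the supremum over $\lambda\in\PP_{\Gamma(\theta,\sigma)}(\mu)$ yields $\ppgamma(\mu)\leq S(\eta)$ for every $\eta<\mu$, hence for the minimum as well.

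The only genuinely delicate point is passing from the witnessing cofinal $A'$ for $\lambda\in\PP_{\Gamma(\theta,\sigma)}(\mu)$ to a progressive cofinal $A\subseteq(\eta,\mu)\cap\reg$ carrying the same membership in $\pcf_{\Gamma^*(\theta,\sigma)}$; but this is precisely what Corollary~\ref{starchar} supplies, and once that observation is in place the rest is bookkeeping. As a by-product one sees that the outer minimum is attained at the $\eta_0$ produced by Theorem~\ref{theorem1}, and that $S(\eta)=\ppgamma(\mu)$ for all sufficiently large $\eta<\mu$.
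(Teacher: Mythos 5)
Your proof is correct and follows essentially the same approach the paper takes implicitly: one direction from Theorem~\ref{theorem1} combined with Proposition~\ref{supchar}, the other from the observation that any witness $A'$ to $\lambda\in\PP_{\Gamma(\theta,\sigma)}(\mu)$ may be truncated to a progressive cofinal $A\subseteq(\eta,\mu)\cap\reg$ retaining $\lambda\in\pcf_{\Gamma^*(\theta,\sigma)}(A)$ by Corollary~\ref{starchar}. (One cosmetic point: taking $\zeta=\max(\eta,|A'|^+)$ gives $\min(A)\geq\eta$ rather than $\min(A)>\eta$; replace it with, say, $\max(\eta,|A'|^+)+1$ to land strictly in $(\eta,\mu)$.)
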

\noindent Thinking about this in another way, if we define
\begin{equation}
\label{kappaeta}
\kappa_\eta:=\sup\{\sup\pcfgamma(A):A\subseteq(\eta,\mu)\cap{\sf Reg}, |A|<\min(A)\}
\end{equation}
for $\eta<\mu$, then the sequence $\langle \kappa_\eta:\eta<\mu\rangle$ is non-increasing and hence eventually constant.  This limiting value is just~$\ppgamma(\mu)$.  (There is nothing special about our use of $\Gamma(\theta,\sigma)$ here, as similar things can be proved for $\pp(\mu)$, or other variants. All these ideas are certainly present in Shelah's work; we are just giving them a crisp formulation.)  Stronger results hold if $\sigma$ is uncountable, see Corollary~\ref{triple}.

It also makes sense to ask about the value of $\kappa_\eta$ from~(\ref{kappaeta}) for various $\eta<\mu$,  and these cardinals admit an easy description as well:

\begin{theorem}
Suppose $\sigma\leq\cf(\mu)<\theta\leq\eta<\mu$ with $\sigma$ regular.  Then $\lambda_1=\lambda_2$, where
\begin{equation}
\lambda_1:=\sup\{\cf^\sigma_{<\theta}(\prod A):A\subseteq[\eta,\mu)\cap{\sf Reg}, |A|<\min(A)\},
\end{equation}
and
\begin{equation}
\lambda_2:=\sup\{\ppgamma(\tau):\eta\leq\tau\leq\mu\text{ and }\sigma\leq\cf(\tau)<\theta\}.
\end{equation}
\end{theorem}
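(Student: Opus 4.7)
The plan is to verify the two inequalities $\lambda_2\leq\lambda_1$ and $\lambda_1\leq\lambda_2$ separately; the first is essentially a restriction argument, while the second requires a minimization.

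For $\lambda_2\leq\lambda_1$, fix an admissible $\tau$ (so $\eta\leq\tau\leq\mu$ with $\sigma\leq\cf(\tau)<\theta$, forcing $\tau$ to be singular) and a $\lambda\in\PP_{\Gamma(\theta,\sigma)}(\tau)$. By definition there is a cofinal $A'\subseteq\tau\cap\reg$ of cardinality less than $\theta$ with $\lambda\in\pcf_{\Gamma^*(\theta,\sigma)}(A')$. Passing to the tail $A'\setminus\eta$ (which in the generic case $\tau>\eta$ remains cofinal in $\tau$, sits inside $[\eta,\mu)\cap\reg$, and is automatically progressive since $|A'\setminus\eta|<\theta\leq\eta$), Proposition~\ref{aboveprop}(4) ensures $\lambda$ is still witnessed, and then Proposition~\ref{supchar} gives $\cf^\sigma_{<\theta}(\prod(A'\setminus\eta))=\sup\pcfgamma(A'\setminus\eta)\geq\lambda$, so $\lambda\leq\lambda_1$. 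Taking the supremum over $\lambda$ yields $\ppgamma(\tau)\leq\lambda_1$.

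For $\lambda_1\leq\lambda_2$, Proposition~\ref{supchar} reduces us to bounding each $\lambda\in\pcfgamma(A)$ for admissible $A$. If $\lambda\leq\mu$ then $\lambda<\mu^+\leq\ppgamma(\mu)\leq\lambda_2$ by taking $\tau=\mu$. So assume $\lambda>\mu$. Use Proposition~\ref{pcfchar}(2) to pick $B\in[A]^{<\theta}$ with $\lambda\in\pcfsig(B)$, and among all such choices take $B$ with $\sup(B)$ minimal; set $\tau:=\sup(B)$. I claim both that $\tau$ is admissible for $\lambda_2$ and that $\lambda\in\pcf_{\Gamma^*(\theta,\sigma)}(B)$; together these give $\lambda\in\PP_{\Gamma(\theta,\sigma)}(\tau)$ by (\ref{hah}) and hence $\lambda\leq\ppgamma(\tau)\leq\lambda_2$.

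For admissibility, $B\subseteq A\subseteq[\eta,\mu)$ yields $\tau\leq\mu$, and the presence of infinitely many distinct regulars $\geq\eta$ forces $\tau>\eta$. Let $J$ be a $\sigma$-complete ideal witnessing $\lambda\in\pcfsig(B)$; if $B$ had a maximum $b^*$ then $\lambda>\mu\geq b^*$ would force $B\setminus\{b^*\}\in J^+$ (otherwise $\tcf(\prod B/J)=b^*$), so after iterating we may assume $B$ has no maximum, giving $\cf(\tau)\leq|B|<\theta$. If in addition $\cf(\tau)<\sigma$, then a cofinal $<\sigma$-sequence in $\tau$ together with $\sigma$-completeness forces some $B\cap\zeta\in J^+$, hence $\lambda\in\pcfsig(B\cap\zeta)$ with $\sup(B\cap\zeta)<\tau$, contradicting minimality. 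For the $\Gamma^*$-membership, Corollary~\ref{starchar} asks that $\lambda\in\pcfgamma(B\setminus\zeta)$ for each $\zeta<\tau$; but the same minimality forces $B\cap\zeta\in J$ (else the positive piece would give a smaller-sup witness), so $J\restr(B\setminus\zeta)$ remains a proper $\sigma$-complete ideal with the same $\tcf$.

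The main obstacle is the coordinated minimality step: both the lower bound $\sigma\leq\cf(\tau)$ and the promotion of $\pcfsig(B)$ to $\pcf_{\Gamma^*(\theta,\sigma)}(B)$ rest on the same ``any bounded witness below $\tau$ would yield a strictly smaller $\sup B$'' principle applied through $\sigma$-completeness. A secondary subtlety is the edge case $\tau=\eta$ in the first direction, which cannot be handled by the simple tail-restriction argument and requires either invoking a pcf monotonicity/lifting result or verifying directly that $\ppgamma(\eta)$ is already absorbed into the values $\ppgamma(\tau')$ for admissible $\tau'>\eta$ recovered by the hard direction.
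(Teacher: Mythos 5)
Your proof follows essentially the same route as the paper's (the paper's proof is a terse sketch of the same idea). For $\lambda_1\leq\lambda_2$, the paper says ``we can assume without loss of generality that $|A|<\theta$, and that there is a $\sigma$-complete ideal $J$ on $A$ such that $J$ contains all initial segments of $A$, \ldots We then let $\tau=\sup(A)$''; your minimization over $\sup(B)$ is exactly the careful way of realizing this WLOG, and your verifications that $\sigma\leq\cf(\tau)<\theta$, that $B$ can be taken without a maximum, and that the ideal ends up extending the bounded ideal are all the intended details. The split into $\lambda\leq\mu$ (take $\tau=\mu$) versus $\lambda>\mu$ is a clean way to guarantee $\tau>\eta$ and is fine. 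For $\lambda_2\leq\lambda_1$, the tail-restriction via Proposition~\ref{aboveprop}(4) and Proposition~\ref{supchar} is also the paper's implicit argument (it just says ``the other direction is even easier'').

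The only point of substance is the edge case you yourself flag: when $\tau=\eta$ (which can only be admissible when $\eta$ is singular with $\sigma\leq\cf(\eta)<\theta$), the tail $A'\setminus\eta$ is empty and the restriction argument gives nothing. For $\lambda\leq\mu$ this is harmless since $\lambda_1\geq\mu^+$ always (take a cofinal $A\subseteq(\eta,\mu)\cap\reg$ with $|A|<\theta$ and note $\cf^\sigma_{<\theta}(\prod A)\geq\tcf(\prod A/J^{\bd}[A])\geq\mu^+$), but for $\lambda\in\PP_{\Gamma(\theta,\sigma)}(\eta)$ with $\lambda>\mu$ some additional argument is needed. The paper glosses over this case entirely (its ``iff'' in the proof even omits the constraint $\eta\leq\tau$, which cannot be right as literally stated), so you are not missing something the paper spells out; but you should either close the gap (e.g.\ by a localization/absorption argument showing every such $\lambda$ is also realized over some admissible $\tau'>\eta$, or that $\ppgamma(\eta)\leq\ppgamma(\mu)$ under these hypotheses) or at least make explicit in your write-up that this case is deferred. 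Your instinct that the ``promotion'' steps in the hard direction and the $\tau=\eta$ case in the easy direction rest on the same kind of $\sigma$-completeness-driven minimality is sound; that is exactly where the argument needs to be careful.
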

\begin{proof}
It suffices to prove for a cardinal~$\lambda$ that $\lambda\in\pcfgamma(A)$ for some progressive $A\subseteq [\eta,\mu)$ if and only if $\lambda\in\PP_{\Gamma(\theta,\sigma)}(\tau)$ for some singular cardinal $\tau\leq\mu$ with $\sigma\leq\cf(\tau)<\theta$.

This follows quite easily by a standard argument: given $\lambda\in\pcf\Gamma(A)$ with $A$ as above, we can assume without loss of generality that $|A|<\theta$, and that there is a $\sigma$-complete ideal~$J$ on $A$ such that $J$ contains all initial segments of $A$, and
\begin{equation}
\lambda=\tcf(\prod A/J).
\end{equation}
We then let $\tau=\sup (A)$ and note that $\tau$ has the needed properties and $\lambda$ is in $\PP_{\Gamma(\theta,\sigma)}(\tau)$.  The other direction is even easier.
\end{proof}

In particular, for the situation where $\eta=\theta$, we see that given a progressive $A\subseteq [\theta,\mu)\cap\reg$ there is a singular cardinal $\tau$ such that
\begin{enumerate}
\item $\sigma\leq\cf(\tau)<\theta<\tau\leq\mu$, and
\sk
\item $\sup\pcfgamma(A)\leq\ppgamma(\tau)$.
\end{enumerate}
Thus, $\sup\{\ppgamma(\tau):\sigma\leq\cf(\mu)<\theta\leq\tau\leq\mu\}$ bounds the $(\theta,\sigma)$-cofinality of any progressive subset $A$ of $[\theta,\mu)\cap\reg$.  This is certainly not a new result as Shelah routinely uses this fact in his work in this area, but it is very useful. Since it does not appear to be stated explicitly in the literature, we do so here for future convenience and ease of reference.

\begin{corollary}
Suppose $\sigma\leq\cf(\mu)<\theta<\mu$, with $\sigma$ regular.
Then for any progressive $A\subseteq [\theta,\mu)\cap \reg$ we have
\begin{equation}
\cf^\sigma_{<\theta}(\prod A)\leq \sup\{\pp_{\Gamma(\theta,\sigma)}(\tau):\sigma\leq\cf\tau<\theta\leq\tau\leq\mu\}.
\end{equation}
\end{corollary}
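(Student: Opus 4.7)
The plan is to reduce the corollary directly to the theorem that immediately precedes it. First, I note that by Proposition~\ref{supchar}, the quantity $\cf^\sigma_{<\theta}(\prod A)$ is nothing other than $\sup\pcfgamma(A)$. So the inequality we need to establish is
\begin{equation*}
\sup\pcfgamma(A)\leq\sup\{\ppgamma(\tau):\sigma\leq\cf(\tau)<\theta\leq\tau\leq\mu\}.
\end{equation*}

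Next, I would apply the preceding theorem with the parameter $\eta$ chosen to be $\theta$ itself. The hypotheses $\sigma\leq\cf(\mu)<\theta\leq\eta<\mu$ of that theorem are satisfied by the corollary's standing assumptions together with this choice of $\eta$. The theorem then asserts the equality of the two suprema
\begin{equation*}
\lambda_1=\sup\{\cf^\sigma_{<\theta}(\prod A'):A'\subseteq[\theta,\mu)\cap\reg,\ |A'|<\min(A')\}
\end{equation*}
and
\begin{equation*}
\lambda_2=\sup\{\ppgamma(\tau):\theta\leq\tau\leq\mu\text{ and }\sigma\leq\cf(\tau)<\theta\}.
\end{equation*}
For our particular progressive $A\subseteq[\theta,\mu)\cap\reg$, the cardinal $\cf^\sigma_{<\theta}(\prod A)$ is one of the terms in the supremum defining $\lambda_1$, so certainly $\cf^\sigma_{<\theta}(\prod A)\leq\lambda_1=\lambda_2$. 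Since $\lambda_2$ is exactly the supremum appearing on the right side of the corollary, the desired bound follows.

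There is essentially no obstacle here, since the nontrivial content has already been extracted in the preceding theorem (where the two-way translation between $\pcfgamma(A')$ for progressive $A'$ drawn from an initial segment and $\PP_{\Gamma(\theta,\sigma)}(\tau)$ for an appropriate singular $\tau$ was established). The role of this corollary is simply to record the one-sided inequality for a single $A$, repackaged in the pseudopower language that will be used in the sequel, and to make explicit that the supremum on the right may be taken over singulars $\tau$ satisfying $\theta\leq\tau\leq\mu$ and $\sigma\leq\cf(\tau)<\theta$.
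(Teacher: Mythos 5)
Your argument is correct and matches the paper's own (implicit) derivation: the paper obtains the corollary by specializing the preceding theorem to $\eta=\theta$ and observing that $\cf^\sigma_{<\theta}(\prod A)$ is one of the terms in the supremum $\lambda_1$, exactly as you do. The invocation of Proposition~\ref{supchar} is harmless but not strictly needed, since the bound follows directly from $\cf^\sigma_{<\theta}(\prod A)\leq\lambda_1=\lambda_2$.
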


\section{Main Theorem}

In this section, we prove our version of Theorem~3.5 from Chapter~X of~\cite{cardarith}, providing a partial repair of the incorrect proof given there. We take this opportunity to remind the reader of the definition of the covering numbers $\cov(\mu,\kappa,\theta,\sigma)$, which play a prominent role in~{\em Cardinal Arithmetic} and its continuations.

\begin{definition}
Let $\mu\geq\kappa\geq\theta>\sigma\geq 2$ be cardinals.
\begin{enumerate}
\item  A subset $\mathcal{P}$ of $[\mu]^{<\mu}$ is said to $\sigma$-cover $[\mu]^{<\theta}$ if any member of $[\mu]^{<\theta}$ is covered by some union of fewer than $\sigma$ sets from $\mathcal{P}$.
\sk
\item $\cov(\mu,\kappa,\theta,\sigma)$ is defined to be the minimum cardinality of a family $$\mathcal{P}\subseteq [\mu]^{<\kappa}\subseteq [\mu]^{<\mu}$$ that $\sigma$-covers $[\mu]^{<\theta}$.
\end{enumerate}
\end{definition}

The assumptions on the relative sizes of the parameters in the above definition are made to avoid trivialities.  Many of the basic properties of these covering numbers are laid out in Section 5 of Chapter II of~\cite{cardarith}; for our purposes, it is enough to note that they represent a natural way of measuring the size of $[\mu]^{<\mu}$ and its relatives.  The reader may also refer to the so-called Analytical Guide\footnote{This appears in published form at the end of the book~\cite{cardarith}, but the document -- known as [E:12] -- has been updated many times, and is available on Shelah's Archive.} to {\em Cardinal Arithmetic} for a broader discussion of the importance of these cardinals.

\begin{theorem}
\label{modelcover}
Assume $\sigma$, $\theta$, and $\mu$ are cardinals with $\sigma$ and $\theta$ regular such that
\begin{equation}
\aleph_0<\sigma\leq\cf(\mu)<\theta<\mu,
\end{equation}
and let $M$ be an elementary submodel of $H(\chi)$ for some sufficiently large regular $\chi$ with $\mu+1\subseteq M$.  Further assume

\begin{list}{$\circledast$}{\setlength{\leftmargin}{.5in}\setlength{\rightmargin}{.5in}}
\item If $A\in M$ is a subset of $\mu\cap\reg$ with $|A|<\mu$, then $M\cap\prod A$ is $(\theta,\sigma)$-cofinal in $\prod A$ modulo the ideal of bounded subsets of~$\mu$.
\end{list}
Then $M\cap[\mu]^{<\mu}$ is a $\sigma$-cover of $[\mu]^{<\theta}$.
\end{theorem}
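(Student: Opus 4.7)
The plan is to take $X \in [\mu]^{<\theta}$ and exhibit a subfamily $\cal{P}_X \subseteq M \cap [\mu]^{<\mu}$ of cardinality less than $\sigma$ whose union contains $X$. First I would reduce to the case that $X$ is unbounded in $\mu$: if $X \subseteq \eta$ for some $\eta < \mu$, then the ordinal $\eta$ is in $M \cap [\mu]^{<\mu}$ (since $\mu+1 \subseteq M$), and $\{\eta\}$ is a one-element cover. With $X$ unbounded (and hence $\cf(\mu) \leq |X| < \theta$), I pick by elementarity an $A \in M$ that is a cofinal subset of $\reg \cap \mu$ of cardinality $\cf(\mu)$ with $\min A > \theta$, and define the characteristic function $g := \Ch_X \in \prod A$ via $g(a) := \sup(X \cap a)$; regularity of $a$ together with $|X| < \theta \leq \min A \leq a$ forces $g(a) < a$.

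Next, I would invoke $\circledast$ with this $A$ (valid since $|A| = \cf(\mu) < \mu$) to obtain $F_0 \subseteq M \cap \prod A$ with $|F_0| < \sigma$ and $\zeta_0 < \mu$ such that for every $a \in A \setminus \zeta_0$ there is $f \in F_0$ with $g(a) < f(a)$. Put $\zeta_0 \in M \cap [\mu]^{<\mu}$ into $\cal{P}_X$ to absorb $X \cap \zeta_0$. To each $f \in F_0$ I would naturally associate the set $Y_f := [0,\sup_{a \in A} f(a))$: for any $x \in X \setminus \zeta_0$, choosing $a \in A \setminus \zeta_0$ above $x$ gives $x \leq g(a) < f(a)$ for some $f \in F_0$, so $x \in Y_f$, whence $\bigcup_{f \in F_0} Y_f \supseteq X \setminus \zeta_0$. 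The family $\{\zeta_0\} \cup \{Y_f : f \in F_0\}$ has size less than $\sigma$ and would serve as $\cal{P}_X$ \emph{provided} each $Y_f$ in fact lies in $[\mu]^{<\mu}$, i.e., $\sup_{a \in A} f(a) < \mu$.

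The main obstacle is therefore the handling of those $f \in F_0$ whose range is cofinal in $\mu$, for then the naive $Y_f$ equals $\mu$ and fails to lie in $[\mu]^{<\mu}$. To address this, I would iterate: the range $R_f := \{f(a) : a \in A\}$ lies in $M$, is cofinal in $\mu$, and has cardinality at most $\cf(\mu) < \theta$; by replacing the scaffold $A$ with a refinement $A' \in M$ built from $R_f$ (for instance, a cofinal sequence of regulars threaded between the elements of $R_f$) and re-applying $\circledast$ to a suitably restricted characteristic function, one peels off further bounded pieces of $X$. The recursion must be controlled by a well-founded parameter and the total number of cover sets kept below $\sigma$, using the regularity of $\sigma$ and that $<\sigma$ many new sets are added at each stage. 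This bookkeeping is the technical heart of the argument --- precisely the step in Shelah's Lemma~3.5 that required repair --- and the preparatory material from the earlier sections, especially the description in Theorem~\ref{starcof} of $\sup\pcf_{\Gamma^*(\theta,\sigma)}(A)$ as $\cf^\sigma_{<\theta}(\prod A / J^{\bd}[A])$, is expected to supply the inductive control needed to close the recursion.
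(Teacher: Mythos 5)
Your proposal correctly isolates the crux---some $f\in F_0$ may have range cofinal in $\mu$, so the naive bound $Y_f=[0,\sup_{a\in A}f(a))$ can be all of $\mu$---but then only gestures at a recursive repair without supplying its mechanism. The iteration you describe (``replace $A$ by a refinement $A'$ threaded through $R_f$ and re-apply $\circledast$'') is not shown to terminate: each stage can again produce functions with cofinal range, the ``well-founded parameter'' controlling the recursion is never named, and there is no argument that the total number of cover sets stays below~$\sigma$. You also expect Theorem~\ref{starcof} to supply the closing step, but it plays no role in the paper's proof. In short, the proposal stops precisely where the repair of Shelah's Lemma~3.5 begins.

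The paper's actual argument avoids covering $X$ directly via ranges of dominating functions. Instead it runs an $\omega$-length recursion building \emph{two} Skolem hulls at each finite stage~$k$: one hull $N^a_k=\Sk(\mu_k+1\cup\{\mu\}\cup H_k)$ generated by an initial segment of ordinals, and one hull $N^b_k=\Sk(X\cup\{\mu\}\cup H_k)$ generated by the target set $X$, with a common growing family $H_k\subseteq\mathcal F$ of $<\sigma$ functions. The key claim is that $A_k:=N^a_k\cap[\mu_k^+,\mu)\cap\reg$ is always covered by $<\sigma$ sets from $M\cap[\mu]^{<\mu}$, which lets $\circledast$ be applied to dominate (mod bounded) the characteristic function $g_k$ of $N^b_k$ on $B_k:=N^b_k\cap A_k$; the dominating functions are absorbed into $H_{k+1}$. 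After $\omega$ stages, uncountability of $\sigma$ guarantees $N^a:=\bigcup_k N^a_k$ still meets $\mu$ in a set covered by $<\sigma$ members of $M\cap[\mu]^{<\mu}$. Then Proposition~\ref{goalprop} shows $N^b\cap\mu\subseteq N^a\cap\mu$ by analyzing the least $\gamma(*)\in N^b\cap\mu\setminus N^a$ and the least $\beta(*)\in N^a\cap\mu$ above it, ruling out $\beta(*)$ singular and $\beta(*)$ regular separately via the hull constructions and the choice of $H_{k+1}$. Since $X\subseteq N^b\cap\mu$, this closes the argument. The device of comparing two Skolem hulls and showing the ``large'' one collapses into the ``small'' one modulo $\mu$ is the missing idea in your proposal, and it is what converts the potentially unbounded iteration you sketch into a terminating $\omega$-step recursion.
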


Notice that we assume $\sigma$ is uncountable, and this will be crucial in our proof.  Clearly this theorem gives us information on covering numbers, as
\begin{equation}
\cov(\mu,\mu,\theta,\sigma)\leq |M|
\end{equation}
in the above situation.

\begin{proof}
With $M$ be as in the statement of the theorem, let us define
\begin{equation}
\mathcal{F}=M\cap\prod(\mu\cap\reg),
\end{equation}
and
\begin{equation}
\mathcal{P}=M\cap [\mu]^{<\mu}.
\end{equation}
Given an unbounded subset $A$ of $\mu\cap\reg$, we will abuse notation a little and say that $\mathcal{F}$ is $(\theta,\sigma)$-cofinal in $\prod A$ when what we really mean
is that the family $\{f\restr A:f\in\mathcal{F}\}$ is $(\theta,\sigma)$-cofinal in $\prod A$.  Our assumption $\circledast$ therefore says that $\mathcal{F}$ is $(\theta,\sigma)$-cofinal in $\prod A$ modulo the ideal of sets bounded in~$\mu$ whenever $A\in M$ is a subset of $\mu\cap\reg$ of cardinality less than~$\theta$.  Our first step is the following easy observation, which leverages our assumption $\circledast$ to obtain a stronger property.

\begin{claim}
\label{improved}
Suppose $A$ is a subset of $\mu\cap\reg$ that can be covered by a union of fewer than $\sigma$ sets from $\mathcal{P}$.
Then $\mathcal{F}$ is $(\theta,\sigma)$-cofinal in $\prod A/J^{\bd}_\mu$.
\end{claim}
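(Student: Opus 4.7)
The plan is to prove the claim by applying $\circledast$ separately on each of the $<\sigma$ many pieces in the covering, and then gluing the resulting bounds together, using that $\sigma$ is uncountable regular and $\sigma\leq\cf(\mu)$.

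Fix a witnessing family $\{P_i:i<\delta\}\subseteq\mathcal{P}$ with $\delta<\sigma$ and $A\subseteq\bigcup_{i<\delta}P_i$. For each $i<\delta$ set $A_i:=P_i\cap\mu\cap\reg$; since $P_i\in M$ and $\mu,\reg\in M$, each $A_i$ lies in $M\cap[\mu]^{<\mu}$ and is a subset of $\mu\cap\reg$. Hypothesis $\circledast$ thus guarantees that $M\cap\prod A_i$ is $(\theta,\sigma)$-cofinal in $\prod A_i$ modulo $J^{\bd}_\mu$. Because $A_i\in M$, any $f\in M\cap\prod A_i$ can be extended by zeros outside $A_i$ to a function $\tilde f\in\mathcal{F}$ with $\tilde f\restr A_i=f$.

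Given $B\in[A]^{<\theta}$ and $g\in\prod B$, put $B_i:=B\cap A_i$, so $B=\bigcup_{i<\delta}B_i$ with $|B_i|<\theta$. Extend $g\restr B_i$ to some $g_i\in\prod A_i$ (pad by zeros). Applying $\circledast$ to $A_i$ and $g_i$ produces $F_i\subseteq M\cap\prod A_i$ with $|F_i|<\sigma$ and an ordinal $\zeta_i<\mu$ such that every $a\in A_i$ with $a\geq\zeta_i$ satisfies $g_i(a)<f(a)$ for some $f\in F_i$. Set $F_0:=\bigcup_{i<\delta}\{\tilde f:f\in F_i\}$ and $\zeta:=\sup_{i<\delta}\zeta_i$.

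To conclude: since $\aleph_0<\sigma=\cf(\sigma)$ and we are taking a union of $\delta<\sigma$ sets each of cardinality $<\sigma$, we have $|F_0|<\sigma$; and since $\delta<\sigma\leq\cf(\mu)$, we have $\zeta<\mu$. For any $b\in B$ with $b\geq\zeta$, choose $i<\delta$ with $b\in B_i\subseteq A_i$; then $b\geq\zeta_i$, so some $f\in F_i$ satisfies $g(b)=g_i(b)<f(b)=\tilde f(b)$, as required. The only point that needs care is that our extended functions actually belong to $\mathcal{F}$, which is why we arranged $A_i\in M$; the essential use of $\sigma>\aleph_0$ is in bundling fewer than $\sigma$ bounds of size $<\sigma$ into a single bound of size $<\sigma$, precisely the feature that distinguishes this argument from the countable case.
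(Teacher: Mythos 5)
Your proof is correct and follows essentially the same route as the paper: decompose the cover into fewer than $\sigma$ pieces $A_i\in M$, apply $\circledast$ on each piece, and glue the resulting $<\sigma$-sized bounding families and the ordinals $\zeta_i$ using $\delta<\sigma=\cf(\sigma)\leq\cf(\mu)$. One small imprecision worth fixing: $\circledast$ applied to $A_i$ yields a dominating family only over subsets of $A_i$ of size $<\theta$ (not over all of $A_i$, which may have cardinality $\geq\theta$), so you should apply it directly to $B_i\in[A_i]^{<\theta}$ and $g\restr B_i$ rather than to the padded $g_i\in\prod A_i$; since you ultimately only use domination on $B_i$, the argument goes through unchanged.
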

\begin{proof}
Suppose $\sigma^*<\sigma$, $A\subseteq \bigcup_{\alpha<\sigma^*}A_\alpha$ where $A_\alpha\in\mathcal{P}$ for each $\alpha$, and $g\in\prod A$.
For each $\alpha$ we choose $F_\alpha\subseteq\mathcal{F}$ and $\mu_\alpha<\mu$ such that
\begin{equation}
(\forall a\in A_\alpha\cap A\setminus\mu_\alpha)(\exists f\in F_\alpha)[g(a)<f(a)],
\end{equation}
and then let
\begin{equation}
F:=\bigcup_{\alpha<\sigma^*}F_\alpha
\end{equation}
and
\begin{equation}
\mu^*:=\sup\{\mu_\alpha:\alpha<\sigma^*\}.
\end{equation}
Since $\sigma^*<\sigma=\cf(\sigma)\leq\cf(\mu)$ we know $|F|<\sigma$, $\mu^*<\mu$, and
\begin{equation}
(\forall a\in A\setminus\mu^*+1)(\exists f\in F)[g(a)<f(a)],
\end{equation}
as required.
\end{proof}

We turn now to the proof of the theorem.  Let $X$ be a subset of $\mu$ of cardinality less than $\theta$.  We are going to use induction on $k<\omega$ to define objects $H_k$, $N^a_k$, $N^b_k$, and $A_k$, $B_k$, where
\begin{itemize}
\item $H_k$ is a certain subset of $\mathcal{F}$ of cardinality less than $\sigma$,
\sk
\item $\mu_k$ is a cardinal less than $\mu$,
\sk
\item $N_k^a = \Sk(\mu_k+1\cup\{\mu\}\cup H_k)$,
\sk
\item $N^b_k = \Sk(X\cup\{\mu\}\cup H_k)$,
\sk
\item $A_k = N^a_k\cap [\mu_k^+,\mu)\cap{\sf Reg}$,
\sk
\item $B_k = N^b_k \cap A_k$
\sk
\item $g_k$ is the characteristic function of $N_k^b$ in $\prod B_k$, that is, for $\iota\in B_k$ we have
 \begin{equation}
 g_k(\iota):=\sup(N_k^b\cap\iota),
 \end{equation}
\sk
\item $\mu_k<\mu_{k+1}$ and $(\forall \iota\in B_k\setminus \mu_{k+1})(\exists f\in H_{k+1})[g_k(\iota)<f(\iota)].$
\end{itemize}

How do we do this? If we start with $H_0=\emptyset$ and $\mu_0=0$, then we have all the other objects for $k=0$.
Suppose now that we have defined things through stage~$k$.  Note that $A_k$ is a progressive subset of $\mu\cap\reg$ of cardinality $\mu_k$, and $B_k$ is a subset of $A_k$ with $|B_k|<\theta$.
The following claim is the key step which allows our construction to continue:

\begin{claim}
\label{claim}
$A_k$ is covered by a union of fewer than~$\sigma$ sets from~$\mathcal{P}$.  Thus,
$\mathcal{F}$ is $(\theta,\sigma)$-cofinal in $\prod A_k/J^{\bd}_\mu$.
\end{claim}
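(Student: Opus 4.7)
The plan is to prove the first assertion—that $A_k$ can be written as a union of fewer than $\sigma$ sets from $\mathcal{P}$—since the second assertion then follows immediately by feeding this into Claim~\ref{improved}. The key observation is that any single element of the Skolem hull $N^a_k = \Sk(\mu_k+1 \cup \{\mu\} \cup H_k)$ arises from a Skolem term applied to finitely many parameters, so
\begin{equation}
N^a_k = \bigcup_{F \in [H_k]^{<\omega}} \Sk(\mu_k+1 \cup \{\mu\} \cup F),
\end{equation}
and intersecting with $[\mu_k^+,\mu) \cap \reg$ yields a corresponding decomposition of $A_k$.

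Next I would check that each piece $N_F := \Sk(\mu_k+1 \cup \{\mu\} \cup F)$ belongs to $M$. Since $\mu+1 \subseteq M$ we have $\mu_k \in M$ and hence $\mu_k+1 \in M$; also $\mu \in M$; and $F$, being a finite subset of $H_k \subseteq \mathcal{F} \subseteq M$, lies in $M$. Thus $N_F$ is definable in $M$ from parameters in $M$, so $N_F \in M$. Its cardinality is at most $\max(\mu_k,\aleph_0)<\mu$, and so
\begin{equation}
A_{k,F} := N_F \cap [\mu_k^+,\mu) \cap \reg
\end{equation}
is a member of $\mathcal{P} = M \cap [\mu]^{<\mu}$.

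It remains to count the pieces. Because $|H_k| < \sigma$ and $\sigma$ is regular and uncountable, $|[H_k]^{<\omega}| = \max(|H_k|,\aleph_0) < \sigma$. Therefore $A_k = \bigcup_{F \in [H_k]^{<\omega}} A_{k,F}$ exhibits $A_k$ as a union of fewer than $\sigma$ members of $\mathcal{P}$, giving the first part of the claim; the second part then follows at once from Claim~\ref{improved}.

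The only delicate point I foresee is the role of the hypothesis $\aleph_0 < \sigma$: it is precisely this assumption that lets us count the finite subsets of $H_k$ and stay below $\sigma$. This matches the remark preceding Theorem~\ref{modelcover} that the uncountability of $\sigma$ will be crucial, and it is the single spot in the argument where countable closure of the ambient cardinal would fail us.
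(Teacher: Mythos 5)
Your proof is correct and takes essentially the same route as the paper: both decompose $N^a_k$ into fewer than $\sigma$ pieces, each definable from $\{\mu_k,\mu\}$ together with finitely many elements of $H_k$, and each of size at most $\mu_k<\mu$, hence lying in $\mathcal{P}=M\cap[\mu]^{<\mu}$; the uncountability of $\sigma$ is what keeps the number of pieces below $\sigma$. The only cosmetic difference is the indexing — you index directly by finite subsets $F\in[H_k]^{<\omega}$, whereas the paper indexes by (formula, parameter-tuple from $\{\mu_k,\mu\}\cup H_k$) pairs — but this is the same idea and both yield the cardinality bound $<\sigma$.
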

\begin{proof}
The proof is a fairly standard  Skolem hull argument, sketched below

Given $x\in N^a_k$, note that there is a set $X$ such that
\begin{itemize}
\item $x\in X$,
\sk
\item $|X|\leq\eta$, and
\sk
\item $X\in\Sk(\{\eta,\mu\}\cup H_k)$.
\sk
\end{itemize}
Why? Given the definition of $N^a_k$, there are a formula $\varphi$, ordinals $\eta_1,\dots,\eta_i$ from $\mu_k$, and parameters $p_1$, ..., $p_k$ from $\{\mu_k,\mu\}\cup H_k$
such that $x$ is the unique $y$ for which
\begin{equation}
\label{eqn}
H(\chi)\models\varphi(y,\vec{\eta},\vec{p}).
\end{equation}
Define $X$ to be the collection of all $z$ for which there are $\eta_1,\dots,\eta_i<\mu_k$ such that $z$ is the unique $y$ such that~(\ref{eqn}) holds.  Clearly $|X|\leq\mu_k$, and $X$ is definable in $H(\chi)$
by a formula with parameters from $\{\mu_k,\mu\}\cup H_k$.  Since the cardinality of $\Sk(\{\mu_k,\mu\}\cup H_k)$ is less than~$\sigma$ and
\begin{equation}
\Sk(\{\mu_k,\mu\}\cup H_k)\subseteq M,
\end{equation}
we can conclude that $A_k$ can be covered by a union of fewer than~$\sigma$ sets from $M\cap [\mu]^{\leq\mu_k}\subseteq\mathcal{P}$.
\end{proof}

Given the above claim, we know that there is an $F\subseteq \mathcal{F}$ and cardinal $\mu_{k+1}<\mu$ such that
\begin{itemize}
\item $|F|<\sigma$,
\sk
\item $\mu_k<\mu_{k+1}$, and
\sk
\item $(\forall \iota\in B_k)(\exists f\in F)[g_k(\iota)<f(\iota)]$.
\sk
\end{itemize}
Now the construction continues once we set $H_{k+1}=H_k\cup F$.

To finish the proof of the theorem, let

\begin{equation}
\mu^*=\sup\{\mu_k:k<\omega\},
\end{equation}

\begin{equation}
N^a:=\bigcup_{k<\omega} N_k^a,
\end{equation}
and
\begin{equation}
N^b:=\bigcup_{k<\omega} N^b_k.
\end{equation}

Notice that since $\cf(\sigma)>\aleph_0$ and
\begin{equation}
N^a\cap\mu=\bigcup_{k<\omega}N^a_k\cap\mu,
\end{equation}
we know $N^a\cap\mu$ is covered by a union of fewer than $\sigma$ sets from $\mathcal{P}$ by way Claim~\ref{claim}. Since $X\subseteq N^b\cap\mu$ by construction, the following
proposition proved using a standard argument of Shelah will let us finish:

\begin{proposition}
\label{goalprop}
$N^b\cap\mu\subseteq N^a\cap\mu$.
\end{proposition}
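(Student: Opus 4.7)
We argue by contradiction. Let $T:=(N^b\cap\mu)\setminus N^a$ and suppose $T\neq\emptyset$, with $\alpha_0:=\min T$. Since $\mu_k+1\subseteq N^a_k$ for each $k$, we have $\mu^*:=\sup_k\mu_k\subseteq N^a$ and so $\alpha_0\geq\mu^*$; arranging $\mu_0\geq\theta$ at the outset ensures $\alpha_0\geq\theta$. The minimality of $\alpha_0$ gives $N^b\cap\alpha_0\subseteq N^a$.

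We first reduce to the case that $\alpha_0$ is a limit cardinal. If $\alpha_0=\beta+1$ is a successor ordinal, then $\beta\in N^b\cap\alpha_0\subseteq N^a$, so $\alpha_0\in N^a$, a contradiction. If $\alpha_0$ is not a cardinal, let $\lambda:=|\alpha_0|<\alpha_0$; then $\lambda\in N^a$, and analyzing $\alpha_0$ via the Cantor normal form base $\lambda$ (or the canonical enumeration of $[\lambda,\lambda^+)$) expresses $\alpha_0$ as a Skolem term in $\lambda$ and certain ordinals strictly below $\alpha_0$, which by minimality lie in $N^a$, forcing $\alpha_0\in N^a$. If $\alpha_0=\lambda^+$ is a successor cardinal, then its cardinal predecessor $\lambda\in N^b\cap\alpha_0\subseteq N^a$, giving $\alpha_0=\lambda^+\in N^a$. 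So $\alpha_0$ must be a limit cardinal.

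The key step is to show $\alpha_0^+\in N^a$; since $\alpha_0^+$ is then a successor cardinal in $N^a$ whose cardinal predecessor is $\alpha_0$, Skolem-definability of the predecessor yields $\alpha_0\in N^a$, contradicting $\alpha_0\in T$. To prove $\alpha_0^+\in N^a$, let $\iota:=\min\{\kappa\in N^a\cap\mu:\kappa\text{ is a cardinal and }\kappa>\alpha_0\}$; this exists because the cardinals $\mu_\beta^+$ for $\beta<\cf(\mu)\leq\mu^*\subseteq N^a$ form a cofinal family in $N^a\cap\mu$. We aim to prove $\iota=\alpha_0^+$. Suppose instead $\iota>\alpha_0^+$. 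If $\iota=\lambda^+$ were a successor cardinal, then $\lambda$ is a cardinal in $N^a$, and the minimality of $\iota$ forces $\lambda\leq\alpha_0$; as $\alpha_0\notin N^a$, we get $\lambda<\alpha_0$, and then $\iota=\lambda^+\leq\alpha_0<\iota$, absurd. Hence $\iota$ must be a limit cardinal.

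The main obstacle is therefore ruling out that $\iota$ is a limit cardinal strictly above $\alpha_0^+$, and this is where the assumption $\aleph_0<\sigma$ enters crucially. Set $\tau:=\cf(\iota)\in N^a$; the minimality of $\iota$ forces $\tau\leq\alpha_0$, and (except in the inaccessible subcase $\tau=\iota$, which is handled separately by applying the dominance directly to $\iota$ after verifying $\iota\in N^b$ via Skolem-definability) we have $\tau<\alpha_0$. The canonical cofinal sequence $\langle\gamma_\xi:\xi<\tau\rangle\subseteq\iota$ lies in $N^a$, and for each $\xi<\tau$ with $\xi\in N^a$ the term $\gamma_\xi\in N^a$ has $|\gamma_\xi|<\alpha_0$ — for otherwise $|\gamma_\xi|$ would be a cardinal of $N^a$ in $(\alpha_0,\iota)$, contradicting the choice of $\iota$ — so in fact $\gamma_\xi<\alpha_0$. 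The assumption $\aleph_0<\sigma$, via the closure properties of $N^a=\bigcup_k N^a_k$ as a countable increasing union whose approximations each contain an initial segment of $\tau$, ensures that sufficiently many $\xi<\tau$ lie in $N^a$ for the partial sup $\sup\{\gamma_\xi:\xi\in N^a\cap\tau\}$ to reach $\iota$; but this sup is at most $\alpha_0<\iota$, yielding the required contradiction and completing the proof.
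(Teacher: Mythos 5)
Your proposal starts at the same point as the paper ($\alpha_0=\min(N^b\cap\mu\setminus N^a)$ is the paper's $\gamma(*)$), but then takes a genuinely different route: rather than deriving a contradiction at $\beta(*)=\min(N^a\cap\mu\setminus\gamma(*))$, you aim to show $\alpha_0^+\in N^a$ by analyzing $\iota=\min\{\kappa\in N^a\cap\mu:\kappa\text{ a cardinal},\kappa>\alpha_0\}$. This is a reasonable reorganization, but there is a genuine gap at the decisive step, and a secondary issue with the preliminary reduction.

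The main gap is in the ``singular $\iota$'' case. You need $N^a\cap\tau$ to be cofinal in $\tau=\cf(\iota)$ so that $\sup\{\gamma_\xi:\xi\in N^a\cap\tau\}=\iota$, and you argue this from the fact that the approximations $N^a_k$ contain initial segments of $\tau$. But those initial segments are $\mu_k+1$, so this only shows $N^a\cap\tau\supseteq\tau$ when $\tau\leq\mu^*$. Your hypotheses give $\tau<\alpha_0$ and $\mu^*\leq\alpha_0$, so the case $\mu^*<\tau<\alpha_0$ is not ruled out. In that case the claim is actually \emph{false}: each $|N^a_k|\leq\mu_k+|H_k|+\aleph_0<\tau$ (since $\mu_k<\mu^*<\tau$ and $|H_k|<\sigma\leq\cf(\mu)\leq\mu^*<\tau$), so $\sup(N^a_k\cap\tau)<\tau$ by regularity of $\tau$, and then $\sup(N^a\cap\tau)=\sup_k\sup(N^a_k\cap\tau)$ has cofinality $\omega<\cf(\tau)$, hence is below $\tau$. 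So $N^a\cap\tau$ is bounded in $\tau$ precisely in the range your argument fails to address. Invoking $\aleph_0<\sigma$ cannot fix this; that hypothesis keeps the $H_k$'s and their countable unions small, but cannot turn a countable union of sets bounded in a regular uncountable $\tau$ into a cofinal set.

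What is missing is the use of $N^b$ and of the minimality of $\alpha_0$. The paper's singular-limit argument, translated to your setup, is: first establish $\iota\in N^b$ (via the Skolem-hull cover argument that produces a set $X\in N^a\cap N^b$ of size $\leq\mu_k$ containing $\iota$, so $\iota$ is definable in $N^b$ from $X$ and $\alpha_0$); then note $\tau=\cf(\iota)\in N^a\cap N^b$ with $\tau<\alpha_0$, so by minimality of $\alpha_0$ we get $N^b\cap\tau\subseteq N^a$; now $N^b$ (which contains $\iota$, $\tau$, $\alpha_0$, and the cofinal sequence) can find $\xi<\tau$ with $\gamma_\xi>\alpha_0$, and this $\xi\in N^b\cap\tau\subseteq N^a$, giving $\gamma_\xi\in N^a\cap(\alpha_0,\iota)$ and hence a contradiction via $|\gamma_\xi|$. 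You gesture at exactly this ($\iota\in N^b$ plus ``the dominance'') but only for the inaccessible subcase; the singular case needs $N^b$ just as much. Secondarily, your preliminary reduction to $\alpha_0$ being a limit cardinal via ``Cantor normal form base $\lambda$'' is not convincing as stated: for $\alpha_0\in(\lambda,\lambda^+)$ the relevant bijection $\lambda\to\alpha_0$ is not definable from $\lambda$ alone, and ordinals with $\omega^{\alpha_0}=\alpha_0$ defeat a literal CNF unwinding. The paper avoids this entirely by arguing directly about $\beta(*)$ (not a successor, not a singular limit, not regular) without ever normalizing $\gamma(*)$.
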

\begin{proof}
Suppose this fails, and define
\begin{equation}
\gamma(*)=\min(N^b\cap\mu\setminus N^a).
\end{equation}
Since $\cf(\mu)=\mu_0\leq \mu^*$ we know $N^a\cap \mu$ is unbounded in $\mu$.  In particular,
\begin{equation}
N^a\cap\mu\setminus \gamma(*)\neq\emptyset.
\end{equation}
Thus, we may define
\begin{equation}
\beta(*)=\min(N^a\cap\mu\setminus\gamma(*)).
\end{equation}

Clearly $\beta(*)$ cannot be a successor ordinal, so $\beta(*)$ is either a singular limit ordinal or a regular cardinal.  We show that neither alternative is possible.
Our first step is to note that $\beta(*)\in N^b$. To see this, note that by first part of the proof of Claim~\ref{claim} there is a set
\begin{equation}
X\in\Sk(\{\mu_k,\mu\}\cup H)\cap [\mu]^{\leq\mu_k}
\end{equation}
with $\beta(*)\in X$.
Our construction guarantees that the set $X$ is in both $N^a$ and $N^b$. Since $X\subseteq N^a$,  it follows that $\beta^*$ can be defined in $N^b$ as $\min(X\setminus\gamma(*))$.

\begin{claim}
$\beta(*)$ is not a singular limit ordinal.
\end{claim}
\begin{proof}
Suppose not, and define
\begin{equation}
\kappa=\cf(\beta(*))<\beta(*).
\end{equation}
Note that $\kappa\in N^a\cap N^b$ as $\beta(*)$ is, and so by our choice of $\gamma(*)$ we must have
\begin{equation}
\kappa<\gamma(*)
\end{equation}
and
\begin{equation}
\label{cofinalneeded}
N^b\cap\kappa\subseteq N^a\cap\kappa.
\end{equation}
Fix $f\in N^a\cap N^b$ mapping $\kappa$ onto a cofinal subset of $\beta(*)$.  In $N^b$, we can find $\alpha<\kappa$ such that $\gamma(*)<f(\alpha)$.  But by (\ref{cofinalneeded}),
 we know $\alpha\in N^a$ as well and this yields a contradiction,  as
\begin{equation}
f(\alpha)\in N^a\cap [\gamma(*),\beta(*)).
\end{equation}
\end{proof}
\begin{claim}
$\beta(*)$ is not a regular cardinal.
\end{claim}
\begin{proof}
Suppose $\beta(*)$ were a regular cardinal.  Clearly $\mu^*\leq\beta(*)$ and so
\begin{equation}
\beta(*)\in N^a\cap N^b\cap [\mu_k^+,\mu)\cap\reg.
\end{equation}
Given our choice of $H_{k+1}$, the contruction guarantees that $N^a_{k+1}$ contains a function $f\in\prod(\mu\cap\reg)$ satisfying
\begin{equation}
\gamma(*)\leq\sup(N^b_k\cap\beta(*))<f(\beta(*))<\beta(*).
\end{equation}
But then of course we have $f(\beta(*))\in N^a\cap[\gamma(*),\beta(*))$ and we have a contradiction.
\end{proof}

The contradictions derived in the previous two claims establish Proposition~\ref{goalprop}, which then finishes our proof of Theorem~\ref{modelcover}.
\end{proof}
\end{proof}

\section{Conclusions}

In this section, we draw conclusions from Theorem~\ref{modelcover}.  We begin by deducing Shelah's $\cov$ vs. $\pp$ theorem, which pins down the relationship between $\ppgamma(\mu)$ and
the covering numbers discussed at the beginning of the previous section. This theorem as it appears originally in Theorem 5.4 in Chapter~II of {\em Cardinal Arithmetic} actually states something a little more general, but what we give here is the heart of the matter.

\begin{theorem}[The $\cov$ vs. $\pp$ Theorem]
\label{covpp}
Suppose $\sigma\leq\cf(\mu)<\theta<\mu$ are cardinals, with $\sigma$ uncountable and regular. Then
\begin{equation}
\cov(\mu,\mu,\theta,\sigma)=\pp_{\Gamma(\theta,\sigma)}(\mu).
\end{equation}
\end{theorem}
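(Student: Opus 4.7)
The plan is to establish the two inequalities separately; the upper bound $\ppgamma(\mu)\leq\cov(\mu,\mu,\theta,\sigma)$ is essentially classical and follows from the earlier characterizations, while the harder lower bound $\cov(\mu,\mu,\theta,\sigma)\leq\ppgamma(\mu)$ is exactly what Theorem~\ref{modelcover} was built to deliver.

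For the easy direction, fix a $\sigma$-cover $\mathcal{P}\subseteq[\mu]^{<\mu}$ of $[\mu]^{<\theta}$ with $|\mathcal{P}|=\cov(\mu,\mu,\theta,\sigma)$.  For any cofinal progressive $A\subseteq\mu\cap\reg$ of cardinality less than~$\theta$, I would assign to each $P\in\mathcal{P}$ the function $f_P\in\prod A$ defined by $f_P(a):=\sup(P\cap a)+1$ when this is below~$a$ and $0$ otherwise.  Given $B\subseteq A$ of cardinality less than $\theta$ and $g\in\prod B$, cover $\{g(b)+1:b\in B\}$ by $\sigma^*<\sigma$ sets $P_\alpha\in\mathcal{P}$; for each $b\in B$ larger than $\max_{\alpha<\sigma^*}|P_\alpha|$ one then has $g(b)<f_{P_\alpha}(b)$ for whichever $\alpha$ places $g(b)+1$ inside $P_\alpha$.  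Since the exceptional $b$ form a bounded subset of~$\mu$, the family $\{f_P:P\in\mathcal{P}\}$ is a $(\theta,\sigma)$-cover of $\prod A$ modulo $J^{\bd}[A]$, so $\cf^\sigma_{<\theta}(\prod A/J^{\bd}[A])\leq\cov(\mu,\mu,\theta,\sigma)$.  Theorem~\ref{starcof}, the embedding~(\ref{hah}), and a supremum over cofinal $A$ then deliver $\ppgamma(\mu)\leq\cov(\mu,\mu,\theta,\sigma)$.

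For the harder direction, set $\lambda:=\ppgamma(\mu)$ and choose $\chi$ regular and sufficiently large.  I would construct $M\prec H(\chi)$ of cardinality $\lambda$ with $\mu+1\subseteq M$ satisfying hypothesis $\circledast$ of Theorem~\ref{modelcover}.  Start with any $M_0\prec H(\chi)$ of size $\lambda$ containing $\mu+1$, and at each subsequent stage $n+1$, for every $A\in M_n$ with $A\subseteq\mu\cap\reg$ and $|A|<\mu$, adjoin a family $F_A\subseteq\prod A$ of cardinality at most $\lambda$ that is $(\theta,\sigma)$-cofinal modulo $J^{\bd}[A]$.  Such an $F_A$ exists with $|F_A|\leq\lambda$ by Theorem~\ref{starcof} (identifying the relevant cofinality with $\sup\pcf_{\Gamma^*(\theta,\sigma)}(A)$), combined with~(\ref{hah}) when $A$ is cofinal in~$\mu$ and with Theorem~\ref{theorem1} via Proposition~\ref{supchar} when $A$ sits in a sufficiently late tail of $\mu\cap\reg$.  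The union $M:=\bigcup_{n<\omega}M_n$ has cardinality $\lambda$, contains $\mu+1$, and verifies $\circledast$, so Theorem~\ref{modelcover} yields $M\cap[\mu]^{<\mu}$ as a $\sigma$-cover of $[\mu]^{<\theta}$, whence $\cov(\mu,\mu,\theta,\sigma)\leq|M|=\lambda$.

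The main obstacle is verifying $\circledast$ while holding $|M|$ down to $\lambda$.  The decisive ingredient is the \emph{uniform} bound $\cf^\sigma_{<\theta}(\prod A)\leq\ppgamma(\mu)$ valid for every progressive $A$ drawn from an appropriate tail of $\mu\cap\reg$, which is precisely the content of Theorem~\ref{theorem1} together with Proposition~\ref{supchar}.  This uniformity is what lets the iterative closure stabilize without inflating the cardinality of~$M$, and it explains why the groundwork of Sections~2--4 is genuinely needed for this direction.
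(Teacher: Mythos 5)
Your proof is correct and follows the same overall plan as the paper: both directions are handled exactly where the paper handles them, and the decisive step in the hard direction is to build a model $M$ of size $\ppgamma(\mu)$ satisfying $\circledast$ so that Theorem~\ref{modelcover} gives $\cov(\mu,\mu,\theta,\sigma)\leq|M|$.

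The one place where your argument diverges from the paper's is the device used to secure $\circledast$.  The paper simply takes $M\prec H(\chi)$ of cardinality $\lambda:=\ppgamma(\mu)$ with $\lambda+1\subseteq M$; then, by elementarity, whenever $M$ contains a set $A$ as in $\circledast$ it contains \emph{some} $(\theta,\sigma)$-cofinal family $F$ of size $\leq\lambda$ for a suitable tail of $A$ (using Theorem~\ref{theorem1}), and the ordinal closure $\lambda+1\subseteq M$ immediately forces $F\subseteq M$.  You instead build $M$ as a union of an $\omega$-chain, explicitly adjoining a small cofinal family $F_A$ for each $A$ seen at the previous stage.  Both devices are standard and both work --- and your cardinality bookkeeping ($\lambda\cdot\lambda\cdot\omega=\lambda$) is fine --- but the paper's version is a touch more economical since a single hypothesis on $M$ does all the work and no recursion is needed.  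Two small points worth tightening: in the easy direction you cite (\ref{hah}), but what you actually need is its converse (that every $\lambda\in\PP_{\Gamma(\theta,\sigma)}(\mu)$ lies in $\pcf_{\Gamma^*(\theta,\sigma)}(A)$ for some cofinal $A$ of size $<\theta$); and in the hard direction, when $A\in M_n$ is cofinal but not progressive you should say explicitly that you pass to a tail $A\setminus\zeta$ with $\zeta$ chosen both above the $\eta$ of Theorem~\ref{theorem1} and above $|A|$, so that Theorem~\ref{theorem1} applies and the resulting family extends to one for $\prod A/J^{\bd}[\mu]$.
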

\begin{proof}
The fact that $\pp_{\Gamma(\theta,\sigma)}(\mu)\leq \cov(\mu,\mu,\theta,\sigma)$ is quite easy and done on page~88 of~\cite{cardarith} (and can also be derived from Corollary~\ref{triple} below).  In fact, that inequality does not require that $\sigma$ is uncountable at all.

For the other direction, let $\chi$ be a sufficiently large regular cardinal, and let $M$ be an elementary submodel of $H(\chi)$ of cardinality $\pp_{\Gamma(\theta,\sigma)}(\mu)$
 containing $\sigma$, $\theta$, and~$\mu$.  It suffices to show that $M$ satisfies the property $\circledast$.

So let $A\in M$ be a subset of $\mu\cap\reg$ with $|A|<\mu$.  We must show that $M\cap\prod A$ is $(\theta,\sigma)$-cofinal in $\prod A$ modulo the ideal of sets bounded in~$\mu$.  This is trivial if $A$ is bounded so we may assume $A$ is cofinal in $\mu\cap\reg$.

Let $\eta<\mu$ be as in Theorem~\ref{theorem1}. We may assume $\eta\in M$, and therefore so is $B=A\setminus\eta+1$.   By the conclusion of Theorem~\ref{theorem1}, if follows that
the $(\theta,\sigma)$-cofinality of $\prod B$ is bounded by $\pp_{\Gamma(\theta,\sigma)}(\mu)$.  The model $M$ will see a family $F\subseteq\prod B$ witnessing this, and since
\begin{equation}
\pp_{\Gamma(\theta,\sigma)}(\mu)+1\subseteq M,
\end{equation}
it follows that $M$ also contains every element of $F$ as well.  But this implies $M\cap\prod A$ is a $(\theta,\sigma)$-cofinal in $\prod A$ modulo the ideal of sets bounded in~$\mu$, and we have $\circledast$.
\end{proof}

A basic construction of Shelah gives us additional information, salvaging a little more of Theorem~3.5 of~\cite{400}.  The arguments used are due to Shelah.

\begin{corollary}
\label{triple}
With $\sigma$, $\mu$, and $\theta$ as in Theorem~\ref{covpp}, we have
\begin{equation}
\ppgamma(\mu)=\cf^\sigma_{<\theta}\left(\prod (\mu\cap\reg)/ J^{\bd}[\mu]\right)=\cov(\mu,\mu,\theta,\sigma).
\end{equation}
\end{corollary}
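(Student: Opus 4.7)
The plan is to invoke Theorem~\ref{covpp}, which already delivers $\ppgamma(\mu)=\cov(\mu,\mu,\theta,\sigma)$, and then to sandwich the middle quantity $\cf^\sigma_{<\theta}(\prod(\mu\cap\reg)/J^{\bd}[\mu])$ between these two equal values.  Writing $\kappa$ for this common value, I handle the two inequalities separately.

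For the lower bound $\kappa\leq\cf^\sigma_{<\theta}(\prod(\mu\cap\reg)/J^{\bd}[\mu])$, I restrict to progressive cofinal subsets.  Fix any cofinal $A\subseteq\mu\cap\reg$ with $|A|<\theta$ and $\min(A)>|A|$, so that $A$ is progressive.  A family that is $(\theta,\sigma)$-cofinal in $\prod(\mu\cap\reg)$ modulo $J^{\bd}[\mu]$ remains so after restricting to $A$ --- the routine point is that the trace of $J^{\bd}[\mu]$ on a cofinal $A$ is exactly $J^{\bd}[A]$.  Theorem~\ref{starcof} identifies the cofinality of $\prod A/J^{\bd}[A]$ as $\sup\pcf_{\Gamma^*(\theta,\sigma)}(A)$, and taking the supremum over all such $A$ recovers $\ppgamma(\mu)$, since by the very definition $\PP_{\Gamma(\theta,\sigma)}(\mu)=\bigcup_A\pcf_{\Gamma^*(\theta,\sigma)}(A)$.

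For the upper bound, I reuse the elementary submodel machinery from the proof of Theorem~\ref{covpp}.  Let $M\prec H(\chi)$ have cardinality $\kappa$ with $\mu+1\subseteq M$ and all relevant parameters in $M$; as in that proof, $M$ automatically satisfies $\circledast$, so Theorem~\ref{modelcover} tells me $\mathcal{P}:=M\cap[\mu]^{<\mu}$ is a $\sigma$-cover of $[\mu]^{<\theta}$.  I aim to show the family $\mathcal{F}:=M\cap\prod(\mu\cap\reg)$ is itself $(\theta,\sigma)$-cofinal in $\prod(\mu\cap\reg)/J^{\bd}[\mu]$, which bounds the cofinality by $|M|=\kappa$.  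Given a test pair $(B,g)$ with $B\subseteq\mu\cap\reg$ of cardinality less than $\theta$ and $g\in\prod B$, I cover $B$ by fewer than $\sigma$ sets $P_i\in\mathcal{P}$.  For those $i$ with $A_i:=P_i\cap\reg$ cofinal in $\mu$, the set $A_i$ lies in $M$ and has cardinality less than $\mu$, so $\circledast$ furnishes a family $F_i\subseteq M\cap\prod A_i$ of size less than $\sigma$ and a threshold $\mu_i^*<\mu$ dominating any extension of $g\restr(B\cap A_i)$ past $\mu_i^*$; the bounded $P_i$ contribute only to the final threshold.  Extending each $f\in F_i$ to a function on $\mu\cap\reg$ inside $M$ (say by zero off $A_i$), taking the union across $i$, and setting $\mu^*:=\sup_i\mu_i^*$ yields the required dominator.

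The real obstacle is in this last step: $\mu\cap\reg$ has cardinality $\mu$, so $\circledast$ does not apply to it directly, and no $<\sigma$-union from $\mathcal{P}$ can reach a tail of $\mu\cap\reg$.  The resolution is to exploit that $(\theta,\sigma)$-cofinality is a local condition tested only on $<\theta$-size sets $B$, so the $\sigma$-cover need only be used on each $B$ individually; the hypothesis $\sigma\leq\cf(\mu)$ then does the heavy lifting, both in keeping the combined family $\bigcup_iF_i$ of size less than $\sigma$ (by regularity of $\sigma$) and in keeping $\mu^*=\sup_i\mu_i^*<\mu$ (since we are taking a supremum of fewer than $\cf(\mu)$ cardinals below $\mu$).
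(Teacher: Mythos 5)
Your proof is correct, but it takes a genuinely different route from the paper on the inequality $\cf^\sigma_{<\theta}(\prod(\mu\cap\reg)/J^{\bd}[\mu])\leq\cov(\mu,\mu,\theta,\sigma)$. The paper's argument for this step is a direct combinatorial construction: given any $\sigma$-cover $\mathcal{P}$ of $[\mu]^{<\theta}$, it assigns to each $B\in\mathcal{P}$ the function $f_B(\tau)=\sup(B\cap\tau)+1$ (for $\tau$ above $|B|$) and verifies that $\{f_B:B\in\mathcal{P}\}$ is $(\theta,\sigma)$-cofinal modulo the bounded ideal, so no elementary submodel and no appeal to $\circledast$ or Theorem~\ref{modelcover} is needed. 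Your argument instead re-runs the submodel machinery: you build $M$ satisfying $\circledast$, use Theorem~\ref{modelcover} to extract a $\sigma$-cover from $M$, and then argue that $M\cap\prod(\mu\cap\reg)$ is itself a dominating family by covering a test set $B$ by sets $P_i\in M$, applying $\circledast$ on each $A_i=P_i\cap\reg$, and reassembling. This is valid --- the regularity of $\sigma$ and $\sigma\leq\cf(\mu)$ keep the combined family and the threshold under control, exactly as you say, and extending each $f\in F_i$ by zero stays inside $M$ since $A_i\in M$ --- but it is heavier than necessary. More importantly, the paper specifically highlights that of the three inequalities only $\cov\leq\pp$ (i.e.\ Theorem~\ref{covpp}) requires $\sigma>\aleph_0$; the other two hold for $\sigma=\aleph_0$ as well. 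Your proof of $\cf\leq\cov$ invokes $\circledast$ and Theorem~\ref{modelcover}, which are exactly the places where uncountability of $\sigma$ is used, so that observation is lost. Your lower bound $\pp\leq\cf$ (restricting a cofinal family to a progressive cofinal $A$ and quoting Theorem~\ref{starcof}) is the same idea as the paper's, just phrased through $\sup\pcf_{\Gamma^*(\theta,\sigma)}(A)$ rather than directly bounding the true cofinalities.
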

\begin{proof}
The fact that the third cardinal is less than or equal to the first is the previous theorem, and that is the only place where we need $\sigma$ to be uncountable.
The other two inequalities necessary for the result hold without this assumption via some standard arguments presented below for completeness.

If $F$ is $(\theta,\sigma)$ cofinal in $\prod(\mu\cap\reg)$ modulo the bounded ideal, then $|F|$ must bound all cardinals in $\PP_{\Gamma(\theta,\sigma)}(\mu)$, and hence
\begin{equation}
\ppgamma(\mu)\leq |F|.
\end{equation}

Suppose now that  $\mathcal{P}\subseteq[\mu]^{<\mu}$ is a $\sigma$-cover of $[\mu]^{<\theta}$. Given $B\in\mathcal{P}$, define a function $f_B$ in $\prod(\mu\cap\reg)$ by
\begin{equation}
f_B(\tau)=
\begin{cases}
\sup(B\cap\tau)+1  &\text{if $|A|<\tau$}\\

0  &\text{otherwise.}
\end{cases}
\end{equation}
We claim the collection $\{f_B:B\in\mathcal{B}\}$ is $(\theta,\sigma)$-cofinal in $\prod(\mu\cap\reg)$ modulo the bounded ideal.

Given a function $f\in\prod(\mu\cap\reg)$ and a cofinal $A\subseteq\mu\cap\reg$ with $|A|<\theta$, we can find $\sigma^*<\sigma$ and sets $B_\alpha\in \mathcal{P}$ for each $\alpha<\sigma^*$ such that
\begin{equation}
\{f(\tau):\tau\in A\}\subseteq\bigcup_{\alpha<\sigma^*}B_\alpha.
\end{equation}
Since $\alpha^*<\sigma\leq\cf(\mu)$ and each $B_\alpha$ is of cardinality less than~$\mu$, we can find a cardinal~$\zeta<\mu$ such that
\begin{equation}
\alpha<\alpha^*\Longrightarrow|B_\alpha|<\zeta.
\end{equation}
For $\tau>\zeta$ in $\mu\cap\reg$, we can choose $\alpha$ with $f(\tau)\in B_\alpha$ and then
\begin{equation}
f(\tau)\leq\sup(B_\alpha\cap\tau)<f_{B_\alpha}(\tau),
\end{equation}
and required.
\end{proof}

We note that the preceding corollary hides a nice quantifier exchange: there is a {\em single} family of $\ppgamma(\mu)$ functions in $\prod\mu\cap\reg$ whose restrictions are cofinal in $\prod A/J$ whenever $A$ is cofinal in~$\mu$ with $|A|<\theta$ and $J$ is an ideal on $A$ extending the bounded ideal.

As another easy corollary, we note the following result of Shelah to which we alluded in the introduction.

\begin{corollary}
\label{6something}
Suppose $\sigma$ is an uncountable regular cardinal, and $\mu$ is singular with $\sigma\leq\cf(\mu)$. Then
\begin{equation}
\pp_{\Gamma(\mu,\sigma)}(\mu)=\cov(\mu,\mu,\mu,\sigma).
\end{equation}
\end{corollary}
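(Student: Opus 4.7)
The plan is to derive this corollary from Theorem~\ref{covpp} by writing both sides as suprema over the parameter $\theta$ ranging through regular cardinals with $\cf(\mu)<\theta<\mu$. That this is a sensible approach relies on $\mu$ being singular, hence a limit cardinal, which we will use to absorb any set of size $<\mu$ into some $[\mu]^{<\theta}$ with $\theta<\mu$.

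First, I would verify
\begin{equation}
\pp_{\Gamma(\mu,\sigma)}(\mu)=\sup\{\pp_{\Gamma(\theta,\sigma)}(\mu):\theta\text{ regular},\ \cf(\mu)<\theta<\mu\}.
\end{equation}
The inequality $\geq$ is immediate, since $\PP_{\Gamma(\theta,\sigma)}(\mu)\subseteq\PP_{\Gamma(\mu,\sigma)}(\mu)$ whenever $\theta<\mu$. For $\leq$, if $\lambda=\tcf(\prod A/J)$ witnesses membership of $\lambda$ in $\PP_{\Gamma(\mu,\sigma)}(\mu)$, then $A\subseteq\mu\cap\reg$ is cofinal with $|A|<\mu$, and using that $\mu$ is a limit cardinal we can pick a regular $\theta$ with $\max\{|A|,\cf(\mu)\}<\theta<\mu$; the same $A$ and $J$ then place $\lambda$ in $\PP_{\Gamma(\theta,\sigma)}(\mu)$.

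Next, I would establish the analogous identity for covering numbers,
\begin{equation}
\cov(\mu,\mu,\mu,\sigma)=\sup\{\cov(\mu,\mu,\theta,\sigma):\theta\text{ regular},\ \cf(\mu)<\theta<\mu\}.
\end{equation}
The $\geq$ direction is monotonicity in $\theta$. For $\leq$, I would choose for each such $\theta$ a family $\mathcal{P}_\theta\subseteq[\mu]^{<\mu}$ of size $\cov(\mu,\mu,\theta,\sigma)$ that $\sigma$-covers $[\mu]^{<\theta}$, and set $\mathcal{P}=\bigcup_\theta\mathcal{P}_\theta$. Any $X\in[\mu]^{<\mu}$ satisfies $|X|<\theta$ for some such $\theta$ (again since $\mu$ is a limit cardinal), so $X$ is $\sigma$-covered by a subfamily of $\mathcal{P}_\theta\subseteq\mathcal{P}$, showing $\mathcal{P}$ is a $\sigma$-cover of $[\mu]^{<\mu}$. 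The crude bound $|\mathcal{P}|\leq\mu\cdot\sup_\theta\cov(\mu,\mu,\theta,\sigma)$ simplifies once we observe that the sup is at least $\mu^+$, since $\mu^+\in\PP_{\Gamma(\theta,\sigma)}(\mu)$ and the easy direction $\pp_{\Gamma(\theta,\sigma)}(\mu)\leq\cov(\mu,\mu,\theta,\sigma)$ already gives $\mu^+\leq\cov(\mu,\mu,\theta,\sigma)$.

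With both equalities in hand, Theorem~\ref{covpp} applies termwise (its hypotheses are exactly that $\sigma$ is uncountable regular and $\cf(\mu)<\theta<\mu$ with $\theta$ regular), yielding
\begin{equation}
\pp_{\Gamma(\mu,\sigma)}(\mu)=\sup_\theta\pp_{\Gamma(\theta,\sigma)}(\mu)=\sup_\theta\cov(\mu,\mu,\theta,\sigma)=\cov(\mu,\mu,\mu,\sigma).
\end{equation}
There is no real obstacle here beyond the bookkeeping above; the assumption that $\sigma$ is uncountable enters the argument solely through the invocation of Theorem~\ref{covpp} at each regular $\theta<\mu$.
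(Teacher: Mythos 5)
Your proposal is correct and takes essentially the same approach as the paper: the paper's proof picks an increasing sequence of regular cardinals $\langle\theta_\alpha:\alpha<\cf(\mu)\rangle$ cofinal in $\mu$, writes both $\cov(\mu,\mu,\mu,\sigma)$ and $\pp_{\Gamma(\mu,\sigma)}(\mu)$ as the corresponding suprema, and applies Theorem~\ref{covpp} termwise. You take the supremum over all regular $\theta$ with $\cf(\mu)<\theta<\mu$ instead of a fixed cofinal sequence, and you spell out the supremum decompositions that the paper leaves implicit, but the argument is the same.
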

\begin{proof}
If $\langle\theta_\alpha:\alpha<\cf(\mu)\rangle$ is an increasing sequence of regular cardinals cofinal in~$\mu$, each greater than~$\cf(\mu)$, then
\begin{align*}
\cov(\mu,\mu,\mu,\sigma) &=\sup\{\cov(\mu,\mu,\theta_\alpha,\sigma):\alpha<\cf(\mu)\}\\
&=\sup\{\pp_{\Gamma(\theta_\alpha,\sigma)}(\mu):\alpha<\cf(\mu)\}\\
&=\pp_{\Gamma(\mu,\sigma)}(\mu).
\end{align*}
\end{proof}

We now come to a key result for our investigation, a theorem that unlocks several consequences of failures of compactness. 

\begin{theorem}
\label{bad}
Suppose $\aleph_0<\sigma\leq\cf(\mu)<\theta<\mu$ with $\sigma$ and $\theta$ regular, and $\ppgamma(\mu)$ is a weakly inaccessible cardinal $\kappa$.
If $\kappa\notin\PP_{\Gamma(\theta,\sigma)}(\mu)$, then there is a progressive $A\subseteq\mu\cap\reg$ such that
 $\pcf_{\Gamma^*(\theta,\sigma)}(A)$ is an unbounded subset of~$\kappa$
\end{theorem}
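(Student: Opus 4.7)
My plan is to prove this by contradiction: suppose every cofinal progressive $A\subseteq\mu\cap\reg$ satisfies $\sup\pcf_{\Gamma^*(\theta,\sigma)}(A)<\kappa$. I will use this supposition to contradict Corollary~\ref{triple}, which tells us $\cov(\mu,\mu,\theta,\sigma)=\kappa$. The strategy is to produce an elementary submodel $M\prec H(\chi)$ with $\mu+1\subseteq M$, $|M|<\kappa$, satisfying the hypothesis $\circledast$ of Theorem~\ref{modelcover}; once we have this, Theorem~\ref{modelcover} gives that $M\cap[\mu]^{<\mu}$ is a $\sigma$-cover of $[\mu]^{<\theta}$, forcing $\cov(\mu,\mu,\theta,\sigma)\leq|M|<\kappa$, a contradiction.

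I would build $M$ as the union of an $\omega$-chain of Skolem hulls $M_0\subseteq M_1\subseteq\cdots$, starting with $M_0=\Sk(\mu+1)$. Given $M_n$ of cardinality less than $\kappa$, for each $A\in M_n$ that is a cofinal subset of $\mu\cap\reg$ with $|A|<\mu$, let $A':=A\setminus|A|^+$, a progressive cofinal tail with $\pcf_{\Gamma^*(\theta,\sigma)}(A')=\pcf_{\Gamma^*(\theta,\sigma)}(A)$. By Theorem~\ref{starcof} together with the contradiction hypothesis,
\[
\cf^\sigma_{<\theta}\bigl(\prod A/J^{\bd}[\mu]\bigr)=\sup\pcf_{\Gamma^*(\theta,\sigma)}(A')<\kappa,
\]
so I can choose a $(\theta,\sigma)$-cofinal family $F_A\subseteq\prod A$ modulo $J^{\bd}[\mu]$ with $|F_A|<\kappa$. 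Let $M_{n+1}=\Sk\bigl(M_n\cup\bigcup_A F_A\bigr)$ where $A$ ranges over the above. Since $|M_n|<\kappa=\cf(\kappa)$, the supremum of the $|F_A|$ over $A\in M_n$ stays below $\kappa$, so $|M_{n+1}|<\kappa$; setting $M:=\bigcup_{n<\omega}M_n$ we have $|M|<\kappa$ because $\cf(\kappa)>\omega$. The submodel $M$ satisfies $\circledast$: bounded $A\in M$ yield a trivial quotient $\prod A/J^{\bd}[\mu]$ (making the condition vacuous), while cofinal $A\in M_n$ have $F_A\subseteq M_{n+1}\subseteq M$ witnessing the required $(\theta,\sigma)$-cofinality.

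This contradiction forces the existence of a cofinal progressive $A\subseteq\mu\cap\reg$ with $\sup\pcf_{\Gamma^*(\theta,\sigma)}(A)=\kappa$ (the reverse inequality being immediate from Theorem~\ref{theorem1}). To conclude, I need to verify $\kappa\notin\pcf_{\Gamma^*(\theta,\sigma)}(A)$: a witnessing ideal $J$ would come with $B\in J^*$ of size less than $\theta$, and $B$ must be cofinal in $\mu$ (otherwise $A\setminus B\in J$ combined with the bounded set $A\cap(\sup B+1)\in J^{\bd}[A]\subseteq J$ would give $A\in J$, contradicting properness), whereupon restricting $J$ to $B$ would place $\kappa$ in $\PP_{\Gamma(\theta,\sigma)}(\mu)$, against the hypothesis. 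Hence $\pcf_{\Gamma^*(\theta,\sigma)}(A)$ is an unbounded subset of $\kappa$. The main obstacle I anticipate is the cardinal bookkeeping needed to keep $|M_n|<\kappa$ throughout the construction; this rests on the regularity of $\kappa$ so that a $<\kappa$-indexed supremum of cardinals below $\kappa$ stays below $\kappa$, together with $\cf(\kappa)>\omega$ at the limit stage.
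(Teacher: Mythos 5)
Your proof is correct and follows essentially the same route as the paper: from the negation (together with $\kappa\notin\PP_{\Gamma(\theta,\sigma)}(\mu)$) deduce that every cofinal progressive $A\subseteq\mu\cap\reg$ has $\cf^\sigma_{<\theta}(\prod A/J^{\bd}[\mu])<\kappa$, build an elementary submodel $M\prec H(\chi)$ with $\mu+1\subseteq M$ and $|M|<\kappa$ satisfying $\circledast$, and then contradict Theorem~\ref{covpp} via Theorem~\ref{modelcover}. The only cosmetic difference is in how $M$ is produced: you construct it as an $\omega$-chain of Skolem hulls absorbing the $(\theta,\sigma)$-cofinal families at each stage, while the paper simply arranges $M\cap\kappa$ to be an initial segment of $\kappa$ (easy since $\kappa$ is weakly inaccessible), which pulls all the witnessing families into $M$ in one stroke.
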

\begin{proof}
Suppose this fails, and let $\chi$ be a sufficiently large regular cardinal, and let $M$ be an elementary submodel of $H(\chi)$ such that
\begin{itemize}
\item $\mu+1\subseteq M$,
\sk
\item $|M|<\kappa$, and
\sk
\item $M\cap\kappa$ is an initial segment of $\kappa$.
\sk
\end{itemize}
This can be arranged easily as $\kappa$ is weakly inaccessible, and we now work to show that $M$ satisfies the condition $\circledast$ of Theorem~\ref{modelcover}.

If $A$ a cofinal subset of $\mu\cap\reg$ with $|A|<\mu$, then $\pcf_{\Gamma^*(\theta,\sigma)}(A)$ must be a bounded subset $\kappa$ -- it cannot be unbounded in~$\kappa$ because we have assumed the theorem fails, and since $\pcf_{\Gamma^*(\theta,\sigma)}(A)\subseteq\PP_{\Gamma(\theta,\sigma)}(\mu)$ it cannot contain anything of cardinality $\kappa$ or larger.  This means that
\begin{equation}
\cf^\sigma_{<\theta}\left(\prod A/ J^{\bd}[\mu]\right)<\kappa
\end{equation}
for any such $A$.

Since $M\cap\kappa$ is an initial segment of $\kappa$, given any such $A\in M$ we know the model contains every member of some family $(\theta,\sigma)$-cofinal in $\prod A/J^{\bd}[\mu]$, and $\circledast$ follows immediately.  But this is absurd, as Theorem~\ref{modelcover} tells us
\begin{equation}
\cov(\mu,\mu,\theta,\sigma)\leq |M|<\kappa=\ppgamma(\mu),
\end{equation}
contradicting Theorem~\ref{covpp}.
\end{proof}

Given Theorem~\ref{bad}, we can start deducing consequences of failures of compactness.  The first thing to note is that such a set $A$ contradicts a conjecture of Shelah:  Conjecture~1.10 of~\cite{666} asserts that a progressive set of regular cardinals can never have an inaccessible accumulation point.  This conjecture is a fundamental one for pcf theory: if it holds, then $\cf(\prod\pcf A)=\cf(\prod A)$ for every progressive~$A$, and if it fails  then one can force a counterexample to this. Shelah discusses this on page 5 of~\cite{666}\footnote{See also the last section of~\cite{420}.}, and notes there that this conjecture is a significant dividing line between chaos and order.  For our purposes,  we just observe that this means getting a counterexample to compactness for~$\PP_{\Gamma(\theta,\sigma)}(\mu)$ will  be quite hard. As he says in the introduction to Section 3 of~\cite{371}, ``Whether $\pcf(\mathfrak{a})$ may have an inaccessible accumulation point remains a mystery for me.''.

With a little more work, we can show that the assumptions of Theorem~\ref{bad} have implications for cardinal arithmetic below~$\mu$ as well. This requires the following lemma, which captures a compactness-type property of $\pcfsig(A)$.

\begin{lemma}
\label{curious}
Suppose $A$ is a progressive set of regular cardinals and $\sigma$ is regular.  Then either $\pcfsig(A)$ has a maximum element, or $\sup\pcfsig(A)$ is singular of cofinality less than $\sigma$.
\end{lemma}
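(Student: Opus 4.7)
The plan is to prove the contrapositive: assuming $\pcfsig(A)$ has no maximum element and setting $\kappa:=\sup\pcfsig(A)$, I will show both $\cf(\kappa)<\sigma$ and that $\kappa$ is singular. The workhorse is the ideal $J^\sigma$, defined to be the $(|A|^+,\sigma)$-completion of $J_{<\kappa}[A]$ in the sense of the paper; equivalently, the smallest $\sigma$-complete ideal on $A$ extending $J_{<\kappa}[A]$. Assuming $\cf(\kappa)\ge\sigma$, I will produce $\kappa\in\pcfsig(A)$ to contradict the no-maximum hypothesis.

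The first step is to show $A\notin J^\sigma$. If $A=\bigcup_{i<\tau}X_i$ with $\tau<\sigma$ and each $X_i\in J_{<\kappa}[A]$, then each $X_i$ lies in some $J_{<\mu_i}[A]$ with $\mu_i<\kappa$, and $\tau<\sigma\le\cf(\kappa)$ gives $\mu^*:=\sup_i\mu_i<\kappa$. Choosing any $\lambda\in\pcfsig(A)$ with $\mu^*<\lambda<\kappa$ (possible because $\kappa$ is a strict supremum), every $X_i$ sits in $J_{<\lambda}[A]$, so $A$, and by heredity its subset $B_\lambda[A]$, lies in the $(|A|^+,\sigma)$-completion of $J_{<\lambda}[A]$, contradicting Proposition~\ref{pcfchar}(3) applied to $\lambda\in\pcfsig(A)$.

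Next, set $\lambda^*:=\min\{\lambda\in\pcf(A):B_\lambda[A]\notin J^\sigma\}$. This minimum exists: were every generator in $J^\sigma$, a transfinite induction on $\mu\in\pcf(A)$ using $J_{<\mu^+}[A]=J_{<\mu}[A]+B_\mu[A]$, the $\sigma$-completeness of $J^\sigma$, and $J^{\bd}[A]\subseteq J_{<\kappa}[A]\subseteq J^\sigma$, would force $A\in J^\sigma$ itself. For every $\mu\in\pcf(A)$ with $\mu<\kappa$ we have $B_\mu[A]\in J_{<\mu^+}[A]\subseteq J_{<\kappa}[A]\subseteq J^\sigma$, so $\lambda^*\ge\kappa$. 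Running the same induction up to $\lambda^*$ yields $J_{<\lambda^*}[A]\subseteq J^\sigma$, and because $J^\sigma$ is $\sigma$-complete, the $(|A|^+,\sigma)$-completion of $J_{<\lambda^*}[A]$ is contained in $J^\sigma$ as well. Thus $B_{\lambda^*}[A]\notin J^\sigma$ keeps $B_{\lambda^*}[A]$ outside that completion, and Proposition~\ref{pcfchar}(3) delivers $\lambda^*\in\pcfsig(A)$. Combined with $\lambda^*\ge\kappa=\sup\pcfsig(A)\ge\lambda^*$, this gives $\lambda^*=\kappa\in\pcfsig(A)$, contradicting no-maximum.

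To finish, if $\kappa$ were regular and $\pcfsig(A)$ had no maximum, then $\pcfsig(A)$ being cofinal in $\kappa$ would force $|\pcfsig(A)|\ge\cf(\kappa)=\kappa$, hence $\kappa\le|\pcf(A)|\le|A|$ by Shelah's bound, while progressivity gives $\min A>|A|\ge\kappa$, contradicting $\min A\in A\subseteq\pcfsig(A)\subseteq[0,\kappa]$. The step I expect to require the most care is verifying $J_{<\lambda^*}[A]\subseteq J^\sigma$ cleanly through its limit stages; the ingredients (the generator decomposition of $J_{<\lambda}[A]$, $\sigma$-completeness of $J^\sigma$, and the treatment of bounded subsets) are all standard structural facts about pcf ideals, but they must be stitched together carefully.
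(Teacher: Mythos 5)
Your argument takes a genuinely different route from the paper, and the main portion of it is correct. The paper's proof imports Claim~6.7F of~\cite{430}, which provides a set $B\subseteq\pcfsig(A)$ with $|B|<\sigma$ whose generators cover $A$; one then shows $\sup\pcfsig(A)=\sup(B)$ by exhibiting a $<\sigma$-cofinal family of size $\sup(B)$, and concludes by observing that a $<\sigma$-sized set with supremum of cofinality $\geq\sigma$ must have a maximum. You instead work directly with the $\sigma$-completion $J^\sigma$ of $J_{<\kappa}[A]$: assuming no maximum and $\cf(\kappa)\geq\sigma$, you show $J^\sigma$ is proper (using that any $<\sigma$-union of sets from $J_{<\kappa}[A]$ already sits inside $J_{<\lambda}[A]$ for some $\lambda\in\pcfsig(A)$ below $\kappa$, whence covering $B_\lambda[A]$ would violate Proposition~\ref{pcfchar}(3)), then locate the least $\lambda^*$ with $B_{\lambda^*}[A]\notin J^\sigma$, argue $\lambda^*\geq\kappa$ and $J_{<\lambda^*}[A]\subseteq J^\sigma$ by the standard induction, and invoke Proposition~\ref{pcfchar}(3) once more to get $\lambda^*\in\pcfsig(A)$, hence $\lambda^*=\kappa$, a contradiction. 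This is self-contained, using only facts already established in the paper, and buys independence from the external localization claim at the cost of a longer ideal-chasing argument. (One small point worth stating explicitly: your step ``each $X_i$ lies in some $J_{<\mu_i}[A]$ with $\mu_i<\kappa$'' needs $\kappa$ to be a limit cardinal, which is true because $\pcfsig(A)$ is cofinal in $\kappa$ with no maximum.)

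There is, however, a genuine gap in your final paragraph. The assertion ``$\kappa\le|\pcf(A)|\le|A|$ by Shelah's bound'' is not a theorem: $|\pcf(A)|\leq|A|$ for progressive $A$ is precisely Shelah's PCF conjecture and remains open. What is known is $|\pcf(A)|<|A|^{+4}$, which is not strong enough for the contradiction you want. Fortunately, the extra argument is unnecessary. The main portion of your proof establishes $\cf(\kappa)<\sigma$, and in any setting where $\pcfsig$ is nontrivial one has $\sigma\leq|A|$ (when $\sigma>|A|$, any proper $\sigma$-complete ideal on $A$ must omit a singleton and $\pcfsig(A)$ collapses to the trivial case); then $\cf(\kappa)<\sigma\leq|A|<\min A\leq\kappa$ gives $\cf(\kappa)<\kappa$ immediately, so $\kappa$ is singular. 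Delete the appeal to the PCF conjecture and replace it with this one line.
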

\begin{proof}
By Claim 6.7F of~\cite{430}, there is a set $B\subseteq\pcfsig(A)$ with
\begin{equation}
|B|<\sigma,	
\end{equation}
and
\begin{equation}
A\subseteq \bigcup\{B_\lambda[A]:\lambda\in B\}.	\end{equation}
We claim
\begin{equation}
\sup\pcfsig(A)=\sup(B).	
\end{equation}
Since $B\subseteq\pcfsig(A)$, it suffices to show that the $<\sigma$-cofinality of $\prod A$ is at most $\sup(B)$.  For each $\tau\in B$, we know that the cofinality of $\prod B_\tau$ is $\tau$, and therefore we can fix $F_\tau\subseteq\prod A$ of cardinality $\tau$ such that the restrictions $f\restr B_\tau[A]$ for $f\in F_\tau$ are cofinal in $\prod B_\tau$. Then
\begin{equation}
F:=\bigcup\{F_\tau:\tau\in B\}	\end{equation}
has cardinality $\sup(B)$ and is clearly $<\sigma$-cofinal in~$\prod A$.  If $\cf(\sup(B))\geq\sigma$, then $B$ must have a maximum element as $|B|<\sigma$ and the result follows.
\end{proof}

\begin{proposition}
Suppose $A$ is a progressive set of regular cardinals and $\sup\pcfgamma(A)$ is a weakly inaccessible inaccessible cardinal~$\kappa$. Then
\begin{equation}
\cf\left([A]^{<\theta},\subseteq\right)<\kappa\Longrightarrow \kappa\in\pcfgamma(A).
\end{equation}
\end{proposition}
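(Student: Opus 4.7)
The plan is to reduce the problem to Lemma~\ref{curious} by exploiting a cofinal family in $[A]^{<\theta}$ of small cardinality. Fix $\mathcal{P}\subseteq [A]^{<\theta}$ that is $\subseteq$-cofinal with $|\mathcal{P}|<\kappa$. The first step I would carry out is a standard monotonicity observation: if $X\subseteq B\subseteq A$ and $\lambda=\tcf(\prod X/J)$ for a $\sigma$-complete ideal $J$ on $X$, then the ``pushforward'' $J':=\{Y\subseteq B: Y\cap X\in J\}$ is a $\sigma$-complete ideal on $B$ with $\lambda=\tcf(\prod B/J')$ (extend each function in a scale arbitrarily on $B\setminus X$, which now lies in $J'$). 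Hence $\pcf_{\sigma\text{-complete}}(X)\subseteq\pcf_{\sigma\text{-complete}}(B)$, and combining this with the definition $\pcf_{\Gamma(\theta,\sigma)}(A)=\bigcup\{\pcf_{\sigma\text{-complete}}(X):X\in [A]^{<\theta}\}$ gives
\begin{equation*}
\pcf_{\Gamma(\theta,\sigma)}(A)=\bigcup_{B\in\mathcal{P}}\pcf_{\sigma\text{-complete}}(B).
\end{equation*}

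With this in hand, the second step is a cardinality count. If $\sup\pcf_{\sigma\text{-complete}}(B)<\kappa$ for every $B\in\mathcal{P}$, then $\kappa=\sup\pcf_{\Gamma(\theta,\sigma)}(A)$ would be a supremum of $|\mathcal{P}|<\kappa$ ordinals each below~$\kappa$, contradicting the regularity of the weakly inaccessible cardinal~$\kappa$. Therefore we may fix some $B_0\in\mathcal{P}$ with $\sup\pcf_{\sigma\text{-complete}}(B_0)=\kappa$.

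The third and final step applies Lemma~\ref{curious} to this $B_0$: either $\pcf_{\sigma\text{-complete}}(B_0)$ has a maximum element (which must then be $\kappa$), or its supremum $\kappa$ is singular of cofinality less than $\sigma$. The second alternative is ruled out since $\kappa$ is regular (indeed uncountable, as $\sigma>\aleph_0$ is not needed here — only regularity). Hence $\kappa\in\pcf_{\sigma\text{-complete}}(B_0)\subseteq\pcf_{\Gamma(\theta,\sigma)}(A)$, as required.

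I do not anticipate a genuine obstacle: the argument is really just a bookkeeping exercise combining the cofinality hypothesis, the regularity of~$\kappa$, and Lemma~\ref{curious}. The only place where care is needed is verifying the monotonicity of $\pcf_{\sigma\text{-complete}}$ under $\subseteq$, which is the routine pushforward-of-ideal construction outlined above.
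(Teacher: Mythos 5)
Your proof is correct and takes essentially the same route as the paper: a small cofinal family in $[A]^{<\theta}$ together with the regularity of $\kappa$ yields a single $B_0\in[A]^{<\theta}$ with $\sup\pcfsig(B_0)=\kappa$, and Lemma~\ref{curious} finishes. The only difference is that you spell out the monotonicity of $\pcfsig$ under inclusion via the pushforward ideal, a step the paper leaves implicit.
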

\begin{proof}
For each $\tau\in\pcfgamma(A)$ we can choose a set $D_\tau\in [A]^{<\theta}$ such that
\begin{equation}
\tau\in\pcfsig(D_\tau).
\end{equation}
If $\cf\left([A]^{<\theta},\subseteq\right)<\kappa$ then there is a $D\in[A]^{<\theta}$ for which
\begin{equation}
\kappa=\sup\pcfsig(D),
\end{equation}
as we can choose $D$ such that $\{\tau\in\pcfgamma(A):D_\tau\subseteq D\}$ has cardinality $\kappa$.  But then
\begin{equation}
\kappa\in\pcfsig(D)
\end{equation}
by way of Lemma~\ref{curious}, and since $|D|\in[A]^{<\theta}$ it follows that
\begin{equation}
\kappa\in\pcfgamma(A),
\end{equation}
as required
\end{proof}

Looking at the preceding proposition in the context of Theorem~\ref{bad}, we obtain the follow corollary which speaks on cardinal arithmetic below~$\mu$.

\begin{corollary}
Suppose $\aleph_0<\sigma\leq\cf(\mu)<\theta<\mu$ with $\sigma$ and $\theta$ regular, and $\ppgamma(\mu)$ is a weakly inaccessible cardinal $\kappa$.
If $\kappa\notin\PP_{\Gamma(\theta,\sigma)}(\mu)$, then there is a cardinal $\chi<\mu$ such that
\begin{equation}
\kappa\leq \cf\left([\chi]^{<\theta},\subseteq\right).
\end{equation}
In particular, $\mu$ cannot be a strong limit cardinal.
\end{corollary}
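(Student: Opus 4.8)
The plan is to argue by contradiction: suppose $\cf([\chi]^{<\theta},\subseteq)<\kappa$ for \emph{every} cardinal $\chi<\mu$, and derive that $\kappa\in\PP_{\Gamma(\theta,\sigma)}(\mu)$ after all. The starting point is Theorem~\ref{bad}, which under the present hypotheses produces a progressive $A\subseteq\mu\cap\reg$ with $\pcf_{\Gamma^*(\theta,\sigma)}(A)$ unbounded in $\kappa$. This $A$ may be taken cofinal in $\mu\cap\reg$ with $|A|<\mu$: a bounded progressive $A$ would satisfy $|A|<\min A\le\sup A<\mu$, hence $\sup\pcfgamma(A)\le\max\pcf(A)<|A|^{+4}<\mu$ since $\mu$ is a limit cardinal, contradicting that $\pcf_{\Gamma^*(\theta,\sigma)}(A)\subseteq\pcfgamma(A)$ is unbounded in $\kappa$ (alternatively, the proof of Theorem~\ref{bad} only ever deals with cofinal sets). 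The target is then to promote ``$\pcf_{\Gamma^*(\theta,\sigma)}(A)$ is unbounded in $\kappa$'' to ``$\kappa\in\pcf_{\Gamma^*(\theta,\sigma)}(A)$''; once that is in hand, (\ref{hah}) gives $\kappa\in\PP_{\Gamma(\theta,\sigma)}(\mu)$ and the contradiction is reached.

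To carry out the promotion, I would first replace $A$ by the tail $A\setminus\eta$, where $\eta\ge\theta$ is chosen as in Theorem~\ref{theorem1}. By Corollary~\ref{starchar} this leaves $\pcf_{\Gamma^*(\theta,\sigma)}(A)$ unchanged, it keeps $A$ cofinal in $\mu\cap\reg$, and --- crucially --- now every further tail $A\setminus\zeta$ with $\zeta<\mu$ is a progressive subset of $[\eta,\mu)\cap\reg$, so Theorem~\ref{theorem1} gives $\sup\pcfgamma(A\setminus\zeta)\le\kappa$. Since also $\pcf_{\Gamma^*(\theta,\sigma)}(A)=\pcf_{\Gamma^*(\theta,\sigma)}(A\setminus\zeta)\subseteq\pcfgamma(A\setminus\zeta)$ is unbounded in $\kappa$, we conclude $\sup\pcfgamma(A\setminus\zeta)=\kappa$ for all $\zeta<\mu$. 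Moreover $([A\setminus\zeta]^{<\theta},\subseteq)\cong([\chi]^{<\theta},\subseteq)$ for $\chi=|A\setminus\zeta|\le|A|<\mu$, so the contradiction hypothesis yields $\cf([A\setminus\zeta]^{<\theta},\subseteq)<\kappa$. Thus each $A\setminus\zeta$ meets all the hypotheses of the proposition immediately preceding this corollary, so $\kappa\in\pcfgamma(A\setminus\zeta)$ for every $\zeta<\mu$. Corollary~\ref{starchar} then closes the loop: $\kappa\in\bigcap_{\zeta<\mu}\pcfgamma(A\setminus\zeta)=\pcf_{\Gamma^*(\theta,\sigma)}(A)$, as wanted.

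The ``in particular'' clause is then a quick afterthought: fixing $\chi<\mu$ witnessing $\kappa\le\cf([\chi]^{<\theta},\subseteq)$, if $\mu$ were a strong limit we would have $\cf([\chi]^{<\theta},\subseteq)\le|[\chi]^{<\theta}|\le 2^{|\chi|}<\mu<\mu^+\le\kappa$, a contradiction.

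I expect the delicate part to be organizational rather than conceptual: one must verify that \emph{all} the tails $A\setminus\zeta$, not merely $A$ itself, simultaneously satisfy the three hypotheses of the preceding proposition. For the weak-inaccessibility hypothesis --- that $\sup\pcfgamma(A\setminus\zeta)$ is exactly $\kappa$ --- it is essential that we first trimmed $A$ into $[\eta,\mu)$ via Theorem~\ref{theorem1}, and it is equally essential that $\pcf_{\Gamma^*(\theta,\sigma)}$ is invariant under deleting initial segments, so that Corollary~\ref{starchar} legitimately repackages the pointwise conclusions $\kappa\in\pcfgamma(A\setminus\zeta)$ into the single statement $\kappa\in\pcf_{\Gamma^*(\theta,\sigma)}(A)$.
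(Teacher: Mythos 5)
Your route is the one the paper intends (it gives no written proof, just the remark that the corollary follows from Theorem~\ref{bad} together with the preceding proposition), and the way you fill in the details is sound: trimming into $[\eta,\mu)$ via Theorem~\ref{theorem1} to force $\sup\pcfgamma(A\setminus\zeta)=\kappa$ on every tail, invoking the proposition tail-by-tail, and then repackaging via Corollary~\ref{starchar} and~(\ref{hah}) is exactly the right bookkeeping, and the ``in particular'' clause is handled correctly.

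The one flaw is your primary justification that $A$ may be taken cofinal: the inequality $\max\pcf(A)<|A|^{+4}$ is not a theorem. Shelah's bound is on the \emph{number} of elements of $\pcf(A)$ (e.g. $|\pcf(A)|<|A|^{+4}$), not on its supremum; already $\max\pcf(\{\aleph_n:1\leq n<\omega\})$ can exceed $\aleph_4$, and in general a bounded progressive $A$ can have $\sup\pcfgamma(A)$ far above $\sup A$, so this argument does not rule out the bounded case. The issue matters, because if $\sup A=\mu'<\mu$ then $\kappa\in\pcf_{\Gamma^*(\theta,\sigma)}(A)$ only witnesses $\kappa\in\PP_{\Gamma(\theta,\sigma)}(\mu')$, which is not in conflict with $\kappa\notin\PP_{\Gamma(\theta,\sigma)}(\mu)$. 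Fortunately your parenthetical fallback is the correct fix and should be promoted to the main argument: in the proof of Theorem~\ref{bad}, the contradiction hypothesis is only ever applied to cofinal subsets of $\mu\cap\reg$ of cardinality less than $\mu$, so that proof actually yields a \emph{cofinal} such $A$ (and deleting an initial segment to make it progressive changes neither its cofinality in $\mu$ nor $\pcf_{\Gamma^*(\theta,\sigma)}(A)$). With that substitution the proof is complete.
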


Finally, we turn to the compactness result promised in the introduction, rephrased in more standard terminology.\footnote{To get a positive answer to Question~1, just take $\sigma=\cf(\mu)$ in Theorem~\ref{main}.}

\begin{theorem}
\label{main}
Suppose $\sigma$ is an uncountable regular cardinal, $\mu$ is singular with $\cf(\mu)\geq\sigma$, and $\pp_{\Gamma(\mu,\sigma)}(\mu)$ is a weakly inaccessible cardinal~$\kappa$.  Then $\kappa\in\PP_{\Gamma(\mu,\sigma)}(\mu)$.
\end{theorem}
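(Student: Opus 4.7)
The plan is to reduce to Theorem~\ref{bad} and then combine Lemma~\ref{curious} with Corollary~\ref{starchar}. First, observe that from (the proof of) Corollary~\ref{6something} we have
\begin{equation*}
\pp_{\Gamma(\mu,\sigma)}(\mu)=\sup\{\pp_{\Gamma(\theta,\sigma)}(\mu):\cf(\mu)<\theta<\mu,\ \theta\text{ regular}\}.
\end{equation*}
This supremum is realised along a non-decreasing sequence of length $\cf(\mu)$, hence its cofinality is at most $\cf(\mu)<\mu<\kappa$. Since $\kappa$ is weakly inaccessible and therefore regular, the supremum must actually be attained: we can pick a regular $\theta$ with $\cf(\mu)<\theta<\mu$ and $\pp_{\Gamma(\theta,\sigma)}(\mu)=\kappa$. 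This cofinality step is the only place where weak inaccessibility really bites, and I expect it to be the main conceptual move in the argument.

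Next, apply Theorem~\ref{bad} to this $\theta$. If $\kappa\in\PP_{\Gamma(\theta,\sigma)}(\mu)$ then we are done, because $\theta<\mu$ yields $\PP_{\Gamma(\theta,\sigma)}(\mu)\subseteq\PP_{\Gamma(\mu,\sigma)}(\mu)$ directly from the definitions.  Otherwise, Theorem~\ref{bad} hands us a progressive $A\subseteq\mu\cap\reg$ with $\pcf_{\Gamma^*(\theta,\sigma)}(A)$ unbounded in $\kappa$. Inspecting the proof of Theorem~\ref{bad} shows such an $A$ may be taken cofinal in $\mu\cap\reg$, since the elementary submodel argument there only tests cofinal subsets of $\mu\cap\reg$ with cardinality less than $\mu$; confirming this is the one mild technical obstacle in the plan. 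With $|A|<\theta<\mu$ and $A$ cofinal, part~(4) of Proposition~2.2 yields
\begin{equation*}
\pcfgamma(A\setminus\zeta)=\pcfsig(A\setminus\zeta)=\pcf_{\Gamma(\mu,\sigma)}(A\setminus\zeta)
\end{equation*}
for every $\zeta<\mu$.

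By Corollary~\ref{starchar}, each $\pcfsig(A\setminus\zeta)$ contains $\pcf_{\Gamma^*(\theta,\sigma)}(A)$ and is therefore unbounded in~$\kappa$; Theorem~\ref{theorem1} then gives the matching upper bound $\sup\pcfsig(A\setminus\zeta)\leq\pp_{\Gamma(\theta,\sigma)}(\mu)=\kappa$ for all $\zeta$ past some $\eta<\mu$, so the supremum is actually equal to~$\kappa$.  Because $\kappa$ is regular and $\sigma<\kappa$, Lemma~\ref{curious} forbids the ``singular of cofinality $<\sigma$'' alternative and forces $\kappa\in\pcfsig(A\setminus\zeta)$ for all such $\zeta$. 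Monotonicity of $\pcfsig$ in its first argument propagates this downward to every $\zeta<\mu$, so Corollary~\ref{starchar} (now with $\theta$ replaced by $\mu$) delivers $\kappa\in\pcf_{\Gamma^*(\mu,\sigma)}(A)$. Finally, the inclusion~(\ref{hah}) applied with $\theta=\mu$ yields $\kappa\in\PP_{\Gamma(\mu,\sigma)}(\mu)$, as desired.
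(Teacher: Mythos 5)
Your overall strategy tracks the paper's closely: pin down a regular $\theta<\mu$ with $\ppgamma(\mu)=\kappa$ using the regularity of $\kappa$, invoke Theorem~\ref{bad} to produce a progressive $A$ with $\pcf_{\Gamma^*(\theta,\sigma)}(A)$ unbounded in $\kappa$, bring in Lemma~\ref{curious}, and close with Corollary~\ref{starchar} and the interval property. But there is a genuine gap at the point where you write ``With $|A|<\theta<\mu$ and $A$ cofinal.'' Inspecting the proof of Theorem~\ref{bad} does let you take $A$ cofinal in $\mu\cap\reg$ with $|A|<\mu$ (the contradiction there only tests such sets), but nothing in that argument drives $|A|$ below $\theta$. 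This matters, because you lean on $|A|<\theta$ twice: first to identify $\pcfgamma(A\setminus\zeta)$ with $\pcfsig(A\setminus\zeta)$, and second to convert the Theorem~\ref{theorem1} bound $\sup\pcfgamma(A\setminus\zeta)\leq\ppgamma(\mu)=\kappa$ into a bound on $\sup\pcfsig(A\setminus\zeta)$. Without $|A|<\theta$, the set $\pcfsig(A\setminus\zeta)$ can be strictly larger than $\pcfgamma(A\setminus\zeta)$, and you no longer know that its supremum is $\kappa$ on the nose, so the ``$\kappa$ must be the max'' step of your Lemma~\ref{curious} application does not go through as written.

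The paper's proof avoids this issue entirely by never claiming $\sup\pcfsig(A\setminus\zeta)=\kappa$: it only needs the lower bound $\sup\pcfsig(A\setminus\zeta)\geq\kappa$, from which Lemma~\ref{curious} guarantees some element $\lambda_\zeta\geq\kappa$ of $\pcfsig(A\setminus\zeta)$. Setting $\lambda_\zeta=\min\bigl(\pcfsig(A\setminus\zeta)\setminus\kappa\bigr)$, the non-increasing sequence stabilizes at some $\lambda\geq\kappa$, and since $|A|<\mu$ one has $\pcfsig(A\setminus\zeta)=\pcf_{\Gamma(|A|^+,\sigma)}(A\setminus\zeta)$; Corollary~\ref{starchar} then gives $\lambda\in\pcf_{\Gamma^*(|A|^+,\sigma)}(A)\subseteq\PP_{\Gamma(|A|^+,\sigma)}(\mu)$, and the interval property of $\PP_{\Gamma(|A|^+,\sigma)}(\mu)$ drops this down to $\kappa$. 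Note that the paper works at $\theta'=|A|^+$ rather than $\theta$ or $\mu$ in this final stretch, precisely because $|A|<\theta$ is not available. Your proof can be repaired by doing the same: replace the upper-bound step with the ``take the least element $\geq\kappa$ and stabilize'' move, switch to $\Gamma(|A|^+,\sigma)$, and finish via the interval property rather than trying to hit $\kappa$ exactly.
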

\begin{proof}
Let $\langle\theta_\alpha:\alpha<\cf(\mu)\rangle$ be an increasing sequence of regular cardinals cofinal in~$\mu$.  Since
\begin{equation}
\pp_{\Gamma(\mu,\sigma)}(\mu)=\sup\{\pp_{\Gamma(\theta_\alpha,\sigma)}(\mu):\alpha<\cf(\mu)\},
\end{equation}
there must be a regular $\theta<\mu$ for which
\begin{equation}
\kappa=\ppgamma(\mu).
\end{equation}
If $\kappa\in\PP_{\Gamma(\theta,\sigma)}(\mu)$ we are done, so assume this does not happen. This is exactly the hypothesis of Theorem~\ref{bad}, so we may fix a set~$A$ as in the conclusion, that is, with
\begin{equation}
\kappa=\sup\pcf_{\Gamma^*(\theta,\sigma)}(A).
\end{equation}
We will finish by showing
\begin{equation}
\label{finish}
\kappa\in\PP_{\Gamma(|A|^+,\sigma)}(\mu).
\end{equation}

Proposition~\ref{tangerine} tells us
\begin{equation}
\kappa=\lim_{\zeta<\mu}\sup\pcfgamma(A\setminus\zeta)
\end{equation}
and so without loss of generality,
\begin{equation}
\zeta<\mu\Longrightarrow\kappa=\sup\pcfgamma(A\setminus\zeta),
\end{equation}
and in particular,
\begin{equation}
\zeta<\mu\Longrightarrow\kappa\leq\sup\pcfsig(A\setminus\zeta).
\end{equation}

By Lemma~\ref{curious}, it follows that for each $\zeta<\mu$, the set $\pcfsig(A\setminus\zeta)$ must contain a cardinal greater than or equal to~$\kappa$.
Thus, we can define
\begin{equation}
\lambda_\zeta:=\min\left(\pcfsig(A\setminus\zeta)\setminus\kappa\right).
\end{equation}
The sequence $\langle \lambda_\zeta:\zeta<\mu\rangle$ is non-increasing hence eventually constant, and by removing an initial segment of $A$ if needed, we may assume the sequence is constant, say with value $\lambda$.  But this means
\begin{equation}
\lambda\in\pcf_{\Gamma(|A|^+,\sigma)}(A\setminus\zeta)\text{ for all }\zeta<\mu,
\end{equation}
and so by Corollary~\ref{starchar}
\begin{equation}
\lambda\in\pcf_{\Gamma^*(|A|^+,\sigma)}(A)\subseteq\PP_{\Gamma(|A|^+,\sigma)}(\mu).
\end{equation}
Since $\lambda\geq\kappa$ and $\PP_{\Gamma(|A|^+,\sigma)}(\mu)$ is an interval of regular cardinals, we have~(\ref{finish}).
\end{proof}

We make a couple of notes to finish this section.  First, is that the combination of Corollary~\ref{6something} and Theorem~\ref{main} gives us the following:

\begin{corollary}
Suppose $\sigma$ is an uncountable regular cardinal, and~$\mu$ is singular of cofinality at least~$\sigma$. Then
\begin{equation}
\pp_{\Gamma(\mu,\sigma)}(\mu)=^+\cov(\mu,\mu,\mu,\sigma).
\end{equation}
\end{corollary}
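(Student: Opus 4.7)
The plan is a direct combination of Corollary~\ref{6something} (which already gives equality of the two cardinals) with Theorem~\ref{main} (which provides attainment of the supremum in the weakly inaccessible case). Writing $\kappa:=\pp_{\Gamma(\mu,\sigma)}(\mu)$, Corollary~\ref{6something} immediately yields $\kappa=\cov(\mu,\mu,\mu,\sigma)$ as cardinals; the substantive content of the $=^+$ assertion above and beyond this is that whenever $\kappa$ happens to be regular, it is actually realised as a \emph{member} of $\PP_{\Gamma(\mu,\sigma)}(\mu)$ (i.e., the sup is achieved), so that the identification is witnessed literally rather than only as a supremum.

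To carry this out, I would split on the nature of $\kappa$. Recall from the discussion preceding Theorem~\ref{theorem1} that $\PP_{\Gamma(\mu,\sigma)}(\mu)$ is a non-empty interval of regular cardinals whose minimum is $\mu^+$ and whose supremum is $\kappa$. If $\kappa$ is a successor cardinal, say $\kappa=\lambda^+$, then $\kappa$ must lie in the interval: otherwise every element of $\PP_{\Gamma(\mu,\sigma)}(\mu)$ would be strictly less than $\kappa$, hence at most $\lambda$, forcing $\sup\PP_{\Gamma(\mu,\sigma)}(\mu)\leq\lambda<\kappa$, a contradiction. If $\kappa$ is weakly inaccessible, then the hypotheses of Theorem~\ref{main} are met verbatim and the theorem gives $\kappa\in\PP_{\Gamma(\mu,\sigma)}(\mu)$ directly. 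Together these two subcases establish attainment whenever $\kappa$ is regular.

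The remaining possibility is that $\kappa$ is singular. This case cannot be improved to literal membership, since $\PP_{\Gamma(\mu,\sigma)}(\mu)$ contains only regular cardinals and so $\kappa\notin\PP_{\Gamma(\mu,\sigma)}(\mu)$; however, this is precisely the discrepancy that the $+$ decoration on $=^+$ absorbs (following Shelah's convention, where $=^+$ tolerates replacing either side by its successor, or equivalently records that the sup may be unattained). The main obstacle is therefore not a mathematical one but a notational one: one has to verify that the $=^+$ convention as used in the surrounding literature indeed licenses the unattained singular case, after which the proof is a three-line bookkeeping combination of Corollary~\ref{6something} with Theorem~\ref{main}.
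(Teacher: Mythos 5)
Your proposal is correct and follows the same route as the paper: equality comes from Corollary~\ref{6something}, and attainment in the regular case comes from Theorem~\ref{main}. The paper glosses the successor case (simply describing the corollary as ``a restatement of Theorem~\ref{main}''), whereas you spell out why a successor supremum of an interval of regular cardinals must lie in the interval; that is the right and standard observation, and it is a useful bit of bookkeeping to have made explicit.
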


The two cardinals are equal by Corollary~\ref{6something}, and the notation ``$=^+$'' is used by Shelah to mean that the supremum on the left side is attained if it is regular. This is just a restatement of Theorem~\ref{main} using other notation, but it emphasizes the connection between what we do here and Shelah's work in~\cite{400}.

The second note we make is simply that if something similar to Lemma~\ref{curious} were to hold for $\pcfgamma(A)$, then we could push through compactness for $\Gamma(\theta,\sigma)$ at~$\mu$ by the same proof.  This does not seem to be very likely, although forcing a counterexample is probably very difficult.  Certainly obtaining a set $A$ as in the conclusion of Theorem~\ref{bad} is beyond our current technology.

\section{Questions}

One of the difficulties with pcf theory is that most of the open problems are really difficult, and that will almost certainly be true for many of the questions we pose here, but perhaps not all of them.

\begin{question}
Suppose $\sigma<\theta$ are regular cardinals and~$\mu$ is singular with
\begin{equation}
\aleph_0<\sigma\leq\cf(\mu)<\theta<\mu.
\end{equation}
If $\ppgamma(\mu)$ is a weakly inaccessible cardinal~$\kappa$, must $\kappa$ be in $\PP_{\Gamma(\theta,\sigma)}(\mu)$?  In other words, is
\begin{equation}
\ppgamma(\mu)=^+\sup\PP_{\Gamma(\theta,sigma)}(\mu)?
\end{equation}
\end{question}

One can also ask about the role played by $\sigma$ in our results, in particular, whether the restriction to uncountable $\sigma$ is necessary. For example, the following is natural (and asked by Shelah in Section~1 of~\cite{355}) and makes sense even for~$\mu$ of countable cofinality:
\begin{question}
Is $\pp(\mu)=^+\sup\PP(\mu)$ for every singular cardinal~$\mu$?
\end{question}
\noindent (Again, this is of interest only in the situation where $\pp(\mu)$ is a weakly inaccessible cardinal.)

The role of uncountable cofinality in the cov vs. pp Theorem is also a topic of interest, and Shelah spends much of~\cite{400} trying to eliminate the assumption.  The most concise question to be asked here is:

\begin{question}
\label{other}
If $\mu$ is singular of countable cofinality, is $\pp(\mu)=\cov(\mu,\mu, \aleph_1, 2)$?
\end{question}
There is much more discussion of these matters in the {\em Analytical Guide} [E:12] appendix of~\cite{cardarith}, and also in the first section of~\cite{666}.  Shelah has several partial results in the various continuations of the book.

We can ask variants of the previous question that may turn out to be easier to resolve.  For example, the proof of Corollary~\ref{triple} shows us 
\begin{equation}
\pp(\mu)\leq\cf^{\aleph_0}_{<\aleph_1}\left(\prod\mu\cap\reg\right)\leq\cov(\mu,\mu,\aleph_1,\aleph_0)
\end{equation}
for $\mu$ singular of countable cofinality.  Clearly the middle of this is just the ($<\aleph_1$)-cofinality of $\prod \mu\cap\reg$, while the covering number is equal to $\cov(\mu,\mu,\aleph_1, 2)$, the same that appears in Question~\ref{other}.  Perhaps it is easier to show that two of these three cardinals are equal to each other?

Finally, there is Shelah's Conjecture 1.10 which remains untouched:

\begin{question}
Suppose $A$ is a progressive set of regular cardinals. Can $\pcf(A)$ have a weakly inaccessible point of accumulation?  Equivalently, does ZFC prove
\begin{equation}
\cf\left(\prod A\right)=\cf\left(\prod \pcf(A)\right)
\end{equation}
for every progressive set $A$ of regular cardinals?
\end{question}

\end{document}